\documentclass[12pt,reqno]{amsart}

\usepackage[utf8]{inputenc}
\usepackage[T1]{fontenc}
\usepackage{lmodern}  

\usepackage{amsmath, amsfonts, amssymb, amsthm, amscd, amsbsy}
\usepackage{mathtools}  
\usepackage{mathrsfs}   

\usepackage{enumitem}   
\usepackage{graphicx}   
\usepackage{lineno}     
\usepackage[numbers,sort&compress]{natbib}  

\usepackage[a4paper]{geometry}
\numberwithin{equation}{section}

\theoremstyle{plain}
\newtheorem{theorem}{Theorem}[section]

\newtheorem{corollary}{Corollary}[section]

\newtheorem{lemma}{Lemma}[section]

\allowdisplaybreaks

\newtheorem{theorema}{Theorem}[section]

\usepackage[colorlinks,breaklinks,
linkcolor=blue,
citecolor=green,
urlcolor=NavyBlue]{hyperref}

\title[Heisenberg Uncertainty Principle on Orthants]{Heisenberg Uncertainty Principle on half spaces and Orthants: Best constants, Optimizers and Stability}

\author[N.~Lam]{Nguyen Lam}
\address{School of Science and the Environment, Grenfell Campus, Memorial University of Newfoundland, Corner Brook, NL A2H 5G4, Canada}
\email{nlam@mun.ca}

\author[Y.~Lodha]{Yukta Lodha}
\address{Department of Mathematics, University of Connecticut, Storrs, CT 06269, USA}
\email{yukta.lodha@uconn.edu}

\author[G.~Lu]{Guozhen Lu}
\address{Department of Mathematics, University of Connecticut, Storrs, CT 06269, USA}
\email{guozhen.lu@uconn.edu}

\author[A.~N.~Sengupta]{Ambar N.~Sengupta}
\address{Department of Mathematics, University of Connecticut, Storrs, CT 06269, USA}
\email{ambar.sengupta@uconn.edu}

\date{\today}
\thanks{}

\subjclass[2020]{26D10, 39B62, 46E35} 
\keywords{Heisenberg Uncertainty Principle, Stability estimates, Sharp constants}

\newcommand{\mbr}{{\mathbb R}}
\newcommand{\R}{{\mathbb R}}
\newcommand{\Rnkp}{{\mathbb R}^n_{k,+}}
\newcommand{\RnAp}{{\mathbb R}^n_{A,+}}

\newcommand{\tlv}{{\tilde v}}

\newcommand{\supp}{{\rm supp}}
\newcommand{\pxi}{{\Big( \prod_{i=n-k+1}^{n} x_i}\Big)}

\begin{document}

\begin{abstract}
Though the sharp Heisenberg Uncertainty Principle has been extensively studied in the entire Euclidean spaces, the counterpart on the half spaces or more general orthants has been missing in the literature. 
We investigate the sharp Heisenberg Uncertainty Principle on orthants by computing explicitly the optimal constant and determining all possible extremal functions. Moreover, we establish several stability estimates of the Heisenberg Uncertainty Principle on the half spaces and orthants.
\end{abstract}

\maketitle

\section{Introduction}

Let $\Omega$ be a domain in $\R^n$ containing the origin. Then the celebrated Hardy inequality states that for all $u \in C_c^\infty(\Omega)$, one has 
\begin{equation} \label{Hardy}
    \int_{\Omega} |\nabla u(x)|^2 \, dx \geq H(\Omega)  \int_{\Omega} \frac{|u(x)|^2}{|x|^2} \, dx.
\end{equation}
Here, the optimal constant $H(\Omega)$ in \eqref{Hardy} is $\frac{(n-2)^2}{4}$ even though it cannot be achieved by nontrivial functions. This fact is now very well-understood. Actually, the Hardy type inequalities, along with their various improvements and applications, have been extensively investigated in the literature, which is remarkably rich and vast. The interested reader is referred to
\cite{BFT04, BM98, BV, CKLL24, CZ13, CGMSO18, DD14, DDLL25, DoFLL23, DoLL24, DLLZ, DLL22, DPH25, Flynn20, FLL22, FLL21, FLL25, FLLM23, FLY24, FS08, GR23, 
GL2017, GLMP, GLMW, HT25, HY22, LL23, LLZ19, LLZ20, LuYang2, LuYang1, LuYang3, NVH20, NVH19,  Wang22, YSK15}, to name just a few. We also mention the monographs \cite{BEL, GM1, KMP2007, KP, OK}, which serve as standard references in this field.

The situation is very different  for domains having $0$ on their boundary. Indeed, in this case, it was showed in \cite{Caz10, Fall12, FM12}, for instance, that the sharp constants in the Hardy inequality \eqref{Hardy} can be anywhere between $\frac{(n-2)^2}{4}$ and $\frac{n^2}{4}$. Moreover, it was also pointed out that the optimal constant can be attained by nontrivial functions as long as it is strictly less than $\frac{n^2}{4}$. In particular, in the case where $\Omega$ is a half-space $\R^{n-1}\times \R_{>0}$, the following Hardy inequality has been studied (see \cite{FTT09}):
\begin{equation} \label{Hardy_halfspace}
    \int_{\R^{n-1}\times \R_{>0}} |\nabla u(x)|^2 \, dx \geq \frac{n^2}{4}  \int_{\R^{n-1}\times \R_{>0}} \frac{|u(x)|^2}{|x|^2} \, dx.
\end{equation}
The constant $\frac{n^2}{4}$ is sharp in \eqref{Hardy_halfspace} and it cannot be achieved by nontrivial functions. Here $\R_{>0}=(0,\infty)$.

The Hardy inequality has also been investigated in the setting of orthants.  By an {\em orthant} we mean subset of $\R^n$ of the form $\R_{k,+}^n:=\R^{n-k}\times \R_{>0}^k$, 
where $k\in\{1,\ldots, n\}$. Indeed, in this case, by lifting the orthant $\R_{k,+}^n$ to the whole Euclidean space $\mathbb{R}^{n+2k}$, Su and Yang showed in \cite{SuYang2012} that 

\begin{equation} \label{Hardy_orthant}
    \int_{\R_{k,+}^n} |\nabla u(x)|^2 \, dx \geq \frac{(n+2k-2)^2}{4}  \int_{\R_{k,+}^n} \frac{|u(x)|^2}{|x|^2} \, dx.
\end{equation}
Also, the constant $\frac{(n+2k-2)^2}{4}$ is sharp.

As a simple consequence of the Hardy inequality \eqref{Hardy} and the H\"{o}lder inequality, one obtains
\begin{align}
     \int_{\mathbb{R}^n} |\nabla u(x)|^2 \, dx  \; \int_{\mathbb{R}^n} |x|^2 |u(x)|^2 \,dx
&\geq \frac{(n-2)^2}{4} \int_{\mathbb{R}^n} \frac{|u(x)|^2}{|x|^2} \, dx \; \int_{\mathbb{R}^n} |x|^2 |u(x)|^2 dx \nonumber\\
&\geq \frac{(n -2)^2}{4} \left( \int_{\mathbb{R}^n} |u(x)|^2 \, dx\right)^2. \label{wHUP}
\end{align}
The inequality \eqref{wHUP} is a mathematical formulation of the well-known Heisenberg Uncertainty Principle (HUP) \cite{Heis1927}, which expresses a fundamental limitation on the simultaneous localization of a function and its Fourier transform. In its classical form, the HUP asserts that a nontrivial function \( u \in L^2(\mathbb{R}^n) \) cannot be sharply localized both in position and in momentum spaces, and quantitatively this is represented by  
\[
\left( \int_{\mathbb{R}^n} |\nabla u(x)|^2\,dx \right)\left( \int_{\mathbb{R}^n} |x|^2 |u(x)|^2\,dx \right)
 \gtrsim \left( \int_{\mathbb{R}^n} |u(x)|^2\,dx \right)^2.
\]  
The physics version of the HUP involves the variance, not the expectation of squares of observables as above, but one can be deduced from the other readily. The HUP originates in Heisenberg's work \cite{Heis1927}, and in more precise form in Kennard \cite{Kennard1927} (see  
\cite{Fol97} for a discussion in the context of harmonic analysis).  This inequality shows that any attempt to reduce the spatial spread of \(u\) necessarily increases the spread of its gradient (or frequency), reflecting the fundamental trade-off between position and momentum. It should be noted that the constant \(\frac{(n-2)^2}{4}\) in \eqref{wHUP} is not sharp. Surprisingly, the sharp form of the HUP can be easily derived by the following elementary method. 
Indeed, we have by the H\"{o}lder inequality and the divergence theorem (assumed applicable) that

\begin{align}
    \int_{\mathbb{R}^n} |\nabla u(x)|^2 \, dx  \; \int_{\mathbb{R}^n} |x|^2 |u(x)|^2 \,dx &\geq \left(\int_{\mathbb{R}^n} \left(\nabla u(x)\right) \cdot xu(x) \,dx\right)^2 \nonumber\\ 
    &= \left(\frac{1}{2}\int_{\mathbb{R}^n} \left(\nabla u(x)^2\right)\cdot x \,dx\right)^2 \nonumber\\
     &= \left(-\frac{1}{2}\int_{\mathbb{R}^n} |u(x)|^2 \text{div}(x) \,dx\right)^2 \nonumber\\
     &=\frac{n^2}{4} \left( \int_{\mathbb{R}^n} |u(x)|^2 \, dx\right)^2. \label{HUP}
\end{align}
In fact, it has been known that the constant \(\frac{n^2}{4}\) is optimal in the Heisenberg Uncertainty Principle, and equality is attained precisely by Gaussian functions $\alpha e^{-\beta\left\vert x\right\vert ^{2}}$ with $\beta>0$, which play a central role in quantum mechanics and harmonic analysis. See, for instance, \cite{CFLL24}.

Unlike the Hardy inequality, a formulation of the HUP on half-spaces or orthants has not yet appeared in the literature. Clearly, as in the inequality \eqref{wHUP}, combining the Hardy inequality on orthants \eqref{Hardy_orthant} and the H\"{o}lder inequality yields the following HUP on orthants:

\begin{align}
     \int_{{\Rnkp}} |\nabla u(x)|^2 \, dx  \; \int_{{\Rnkp}} |x|^2 |u(x)|^2 \,dx
&\geq \frac{(n+2k-2)^2}{4} \int_{{\Rnkp}} \frac{|u(x)|^2}{|x|^2} \, dx \; \int_{{\Rnkp}} |x|^2 |u(x)|^2 dx \nonumber\\
&\geq \frac{(n+2k -2)^2}{4} \left( \int_{{\Rnkp}} |u(x)|^2 \, dx\right)^2. \label{wHUPo}
\end{align}

However, by \eqref{HUP}, it is very unlikely that the constant $\frac{(n+2k -2)^2}{4}$ is optimal in \eqref{wHUPo}. Our first main purpose of this paper is to establish and study inequalities of the type of Heisenberg's uncertainty principle in the setting of orthants. Our goal is to find the optimal constant in \eqref{wHUPo} and determine all functions $u$ for which equality holds. 

Our first principal result can be read as follows. Let $S({\Rnkp})$ be the completion of $C_c^\infty({\Rnkp})$ under the norm $\left (\int_{{\Rnkp}} |\nabla u(x)|^2 \, dx \right )^{\frac{1}{2}} +  \left (\int_{{\Rnkp}} |x|^2 |u(x)|^2 dx\right )^{\frac{1}{2}}$; then:

\begin{theorem}[{\bf HUP on orthants}]\label{T:HeisOrth} 
Let $n \geq 1$, and $k\in\{1,\ldots, n\}$. Then 
\begin{equation}\label{E:HUPorthant}
\int_{{\Rnkp}} |\nabla u(x)|^2 \, dx  \; \int_{{\Rnkp}} |x|^2 |u(x)|^2 dx
\geq \frac{(n + 2k)^2}{4} \left( \int_{{\Rnkp}} |u(x)|^2 \, dx\right)^2  
\end{equation}
for all $u \in S({\Rnkp})$.
The constant $\frac{(n+2k)^2}{4}$ in (\ref{E:HUPorthant}) is sharp, and equality holds in (\ref{E:HUPorthant}) if and only if $u(x)$ is of the form $\Big( \prod_{i=n-k+1}^{n}x_i \Big)\alpha e^{-\beta|x|^2}$, for some constants $\alpha$ and $\beta$, with $\beta>0$.
\end{theorem}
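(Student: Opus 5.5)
The plan is to factor out the product $P(x):=\prod_{i=n-k+1}^{n}x_i$, which is positive on $\Rnkp$ and harmonic. Write $u = P v$ with $v = u/P$. I will first work with $u\in C_c^\infty(\Rnkp)$, so that $v$ is also smooth and compactly supported in $\Rnkp$. Set $w(x)=|x|^2$, which gives $\operatorname{div}(P^2 x)=(n+2k)P^2$, since $x\cdot\nabla P = kP$.

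The key step is a ground-state type identity for the gradient term. Since $\nabla u = v\nabla P + P\nabla v$, we have
\[
|\nabla u|^2 = P^2|\nabla v|^2 + \tfrac12\nabla(v^2)\cdot\nabla(P^2) + v^2|\nabla P|^2 .
\]
Integrating the middle term by parts and using $\Delta(P^2)=2|\nabla P|^2$ (because $\Delta P=0$), the last two terms cancel. This yields
\[
\int_{\Rnkp}|\nabla u|^2\,dx=\int_{\Rnkp}P^2|\nabla v|^2\,dx .
\]

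Next apply Cauchy--Schwarz in the weighted space $L^2(P^2dx)$:
\[
\int P^2|\nabla v|^2\,dx\int P^2|x|^2v^2\,dx\ \ge\ \Big(\int P^2 v\,x\cdot\nabla v\,dx\Big)^2 .
\]
The inner integral equals $\tfrac12\int P^2x\cdot\nabla(v^2)\,dx$. Integrating by parts, it becomes $-\tfrac12\int v^2\operatorname{div}(P^2x)\,dx=-\tfrac{n+2k}{2}\int u^2\,dx$. This proves (\ref{E:HUPorthant}) for $C_c^\infty$ functions. The extension to $S(\Rnkp)$ follows by density: all three quantities are continuous in the norm of $S(\Rnkp)$, with $\int u^2\le(\int|\nabla u|^2)^{1/2}(\int|x|^2u^2)^{1/2}$ coming from the inequality itself.

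Sharpness and the form of the extremals come from the equality case of Cauchy--Schwarz, which forces $\nabla v=-2\beta x v$ for a constant $\beta$. Hence $v=\alpha e^{-\beta|x|^2}$ with $\beta>0$, since integrability requires it. Conversely, $u=\alpha P e^{-\beta|x|^2}$ lies in $S(\Rnkp)$: it vanishes on $\partial\Rnkp$, decays rapidly, and can be approximated by cutoffs. A direct computation, or rerunning the chain of equalities above, shows that equality holds for it, so the constant is attained and hence sharp.

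The main obstacle is making the equality characterization rigorous for general $u\in S(\Rnkp)$, not just smooth compactly supported ones. One has to justify that $v=u/P$ is locally $H^1$ with $\int P^2|\nabla v|^2=\int|\nabla u|^2$, and that the integration by parts has no boundary contributions. My approach is to prove the identity for $C_c^\infty$ functions and pass to limits. In $\Rnkp$, $C_c^\infty$ convergence in the $S$-norm gives $L^2_{loc}$ convergence of $\nabla v_j$, because $P$ is bounded below on compact subsets. So the identity persists by Fatou together with continuity, and the inequality chain holds as equalities for any extremal $u$. Equality in Cauchy--Schwarz then gives $\nabla v=-2\beta xv$ almost everywhere. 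Solving this ODE in the distributional sense on the connected open set $\Rnkp$ gives $v=\alpha e^{-\beta|x|^2}$.
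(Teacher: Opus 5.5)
Your proof is correct, but it takes a different route from the paper's proof of this theorem.

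\textbf{The paper's route.} The paper writes $u=Pv$ and lifts $v$ to $\tilde v(x',y)=v(x',|y_1|,\dots,|y_k|)$ on $\R^{n-k}\times(\R^3)^k$. It then applies the Euclidean HUP in dimension $n+2k$ and pushes the three integrals back down with Theorem \ref{T:integration}. The choice $l_i=1$ is what kills the term $l_i(l_i-1)/x_i^2$. The equality case is read off from the Gaussian extremals in $\R^{n+2k}$.

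\textbf{Your route.} You never leave the orthant. You use three ingredients:
\begin{itemize}
\item the ground-state identity $\int_{\Rnkp}|\nabla u|^2=\int_{\Rnkp}P^2|\nabla v|^2$, which is the computation in \eqref{E:gradtlvgradu1} of Lemma \ref{II}, correctly traced to $\Delta P=0$;
\item Cauchy--Schwarz in $L^2(P^2dx)$;
\item $\operatorname{div}(P^2x)=(n+2k)P^2$, which follows from the degree-$k$ homogeneity of $P$.
\end{itemize}
This is essentially the Cauchy--Schwarz version of the paper's later on-orthant identity, Theorem \ref{T:HUPIO} together with Corollary \ref{C:orthant-equality}.

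\textbf{What each route buys.} Your argument is self-contained. It explains the constant $n+2k$ intrinsically, through harmonicity and homogeneity of $P$, rather than through the dimension count of a lift. It uses only that $P$ is positive, harmonic, vanishes on the walls and is homogeneous, so it carries over to other cones admitting such a function, with $k$ replaced by the degree of homogeneity. The lift, by contrast, depends on $P$ being a product of coordinates.

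You are also more careful than the paper about the closure $S(\Rnkp)$. The paper asserts the equality characterization by transfer, without saying in which space the lift of a general $u\in S(\Rnkp)$ lives.

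The paper's lifting buys something else: the same transfer machinery imports the Euclidean stability results and the Gaussian Poincar\'e inequalities wholesale in Section \ref{s:StabHUPOrth}.

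\textbf{A small tightening.} Your phrase ``the identity persists by Fatou together with continuity'' is loose. Either of the following justifies it cleanly.
\begin{itemize}
\item The map $u\mapsto P\nabla(u/P)$ is linear and, by your identity, norm-preserving from $(C_c^\infty(\Rnkp),\|\nabla\cdot\|_2)$ into $L^2$. So it extends isometrically to $S(\Rnkp)$, and your $L^2_{loc}$ convergence identifies the limit as $P\nabla v$. The cross term $\int u\,x\cdot P\nabla v\,dx$ then passes to the limit as a pairing of two strongly convergent sequences.
\item Fatou's one-sided bound $\int P^2|\nabla v|^2\le\int|\nabla u|^2$ already suffices for the equality argument. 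In that case you only need the cross term to converge, which follows from the weak $L^2$ convergence of the bounded sequence $P\nabla v_j$ against the strongly convergent $xu_j$.
\end{itemize}
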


We note here that by our Theorem \ref{T:HeisOrth}, the optimal constant of the HUP on orthants can be improved from $\frac{n^2}{4}$, the sharp constant of the HUP on $\mathbb{R}^n$, to $\frac{(n + 2k)^2}{4}$. Also, as a consequence, we obtain the following HUP on half-spaces, which seems to be the first result about the HUP on domains having $0$ on their boundary:

\begin{theorem}[{\bf HUP on half spaces}]\label{T1.1} 
There holds
\begin{equation}\label{E:HUPhalfspace}  
\int_{\R^{n-1}\times \R_{>0}} |\nabla u(x)|^2 \, dx  \; \int_{\R^{n-1}\times \R_{>0}} |x|^2 |u(x)|^2 dx
\geq \frac{(n + 2)^2}{4} \left( \int_{\R^{n-1}\times \R_{>0}} |u(x)|^2 \, dx\right)^2  
\end{equation}
for all $u \in S(\R^{n-1}\times \R_{>0})$.
The constant $\frac{(n+2)^2}{4}$ in (\ref{E:HUPhalfspace}) is sharp, and equality holds in (\ref{E:HUPhalfspace}) if and only if $u$ is of the form $\alpha x_ne^{-\beta|x|^2}$, for some constants $\alpha$ and $\beta$, with $\beta>0$.
\end{theorem}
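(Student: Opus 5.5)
Theorem \ref{T1.1} is the special case $k=1$ of Theorem \ref{T:HeisOrth}: for $k=1$ we have $\R^n_{1,+}=\R^{n-1}\times\R_{>0}$, the constant $\frac{(n+2k)^2}{4}$ becomes $\frac{(n+2)^2}{4}$, and the product $\prod_{i=n-k+1}^{n}x_i$ reduces to $x_n$. So I will deduce it directly from Theorem \ref{T:HeisOrth}. For completeness, I record how I would prove the orthant theorem itself, specialised to $k=1$.

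\textbf{Factorisation and reduction to a weighted inequality.} For $u\in C_c^\infty(\R^{n-1}\times\R_{>0})$ I write $u=x_n v$. Since $u$ vanishes near $\{x_n=0\}$, the function $v$ is smooth and compactly supported in the half space. Expanding,
\[
|\nabla u|^2 = x_n^2|\nabla v|^2 + 2x_n v\,\partial_n v + v^2 .
\]
The cross term equals $x_n\partial_n(v^2)$. Integrating by parts in $x_n$ gives $\int x_n\partial_n(v^2)\,dx=-\int v^2\,dx$, with no boundary term because of the compact support. Hence
\[
\int |\nabla u|^2\,dx=\int x_n^2|\nabla v|^2\,dx .
\]
In the measure $d\mu=x_n^2\,dx$, the inequality therefore becomes
\[
\int|\nabla v|^2d\mu\int|x|^2v^2d\mu\ge\frac{(n+2)^2}{4}\Big(\int v^2 d\mu\Big)^2 .
\]

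\textbf{Proving the weighted inequality.} I apply Cauchy--Schwarz in $L^2(\mu)$ to $\nabla v$ and $xv$, which gives the bound
\[
\Big(\int x\cdot\nabla v\, v\,d\mu\Big)^2=\frac14\Big(\int x\cdot\nabla(v^2)\,x_n^2\,dx\Big)^2 .
\]
I then integrate by parts using
\[
\operatorname{div}(x\,x_n^2)=n x_n^2+2x_n^2=(n+2)x_n^2 ,
\]
and obtain $\frac{(n+2)^2}{4}\big(\int v^2d\mu\big)^2$. This is the dimension-lifting heuristic: $\R^{n+2}$ viewed through the radial variable in the last coordinate.

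\textbf{Extension to $S$, equality cases, and sharpness.} The inequality passes to $u\in S(\R^{n-1}\times\R_{>0})$ by density, since all three integrals are continuous in the $S$-norm, using the Hardy inequality \eqref{Hardy_halfspace} for the $L^2$ term.

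Equality in Cauchy--Schwarz forces $\nabla v=-2\beta x v$ for some constant $\beta$. Hence $v=\alpha e^{-\beta|x|^2}$, and integrability requires $\beta>0$. Conversely, $u=\alpha x_n e^{-\beta|x|^2}$ lies in $S$ and attains equality, which also proves sharpness.

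The main obstacle is the equality characterisation for a general $u\in S$ rather than for smooth compactly supported $u$. One has to justify the identity $\int|\nabla u|^2=\int x_n^2|\nabla v|^2$ and the integration by parts for $v=u/x_n$ when $u$ is only in the completion. I would handle this by truncation, cutting off near $x_n=0$ and near infinity, together with the Hardy inequality \eqref{Hardy_halfspace}, which controls the boundary error terms. Since all of this is already contained in Theorem \ref{T:HeisOrth}, the present result follows by setting $k=1$.
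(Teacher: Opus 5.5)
Your top-level deduction is exactly the paper's: Theorem~\ref{T1.1} is obtained there simply as the case $k=1$ of Theorem~\ref{T:HeisOrth}. Your self-contained argument for that case, however, takes a different route from the paper's proof of Theorem~\ref{T:HeisOrth}.

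The paper writes $u=x_nv$ and lifts $v$ to $\tlv(x',y)=v(x',|y|)$ on $\R^{n-1}\times\R^3$. It then applies the sharp HUP on $\R^{n+2}$ together with its Gaussian equality cases, and pulls everything back through Theorem~\ref{T:integration} with $l=1$. You stay on the half space instead. Your identity $\int|\nabla u|^2=\int x_n^2|\nabla v|^2$ is exactly the $l=1$ case of that transfer formula, where the $l(l-1)x_n^{-2}$ term vanishes. In your version, $n+2$ appears as $\operatorname{div}(x\,x_n^2)/x_n^2$ rather than as the dimension of the lifted space.

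What your route buys: it is shorter, needs no auxiliary Euclidean space, and reads the equality case directly off the Cauchy--Schwarz step in $L^2(x_n^2\,dx)$ instead of importing the Gaussian characterization on $\R^{n+2}$. It is essentially the direct computation the paper itself uses later in Theorem~\ref{T:HUPIO} and Corollary~\ref{C:orthant-equality}. It also extends verbatim to every $k$ with $w=\prod_{i>n-k}x_i$, since $\Delta w=0$ and $x\cdot\nabla w=kw$. What the lifting buys: it is a uniform transfer device, which the paper reuses to import the Gaussian Poincar\'e inequality and the stability results from $\R^{n+2k}$.

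The obstacle you flag is milder than you suggest. On $C_c^\infty$, the map $u\mapsto v=u/x_n$ carries $\int|\nabla u|^2$, $\int|x|^2u^2$ and $\int u^2$ isometrically to the corresponding $x_n^2\,dx$-weighted quantities of $v$. Hence both identities you use extend to all of $S(\R^{n-1}\times\R_{>0})$ by continuity in the $S$-norm. These are the gradient identity and $\int (x\cdot\nabla v)\,v\,x_n^2\,dx=-\frac{n+2}{2}\int u^2\,dx$. So no truncation is needed before invoking equality in Cauchy--Schwarz.
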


A natural question once we can determine the sharp constants and all the optimizers of \eqref{E:HUPorthant} and \eqref{E:HUPhalfspace} is to study their stability. This type of question was first raised by Brezis and Lieb in \cite{BL85}. More clearly, since the optimal constant and the full family of extremal functions for the celebrated Sobolev inequality are known explicitly, Brezis and Lieb asked in \cite{BL85} whether the Sobolev inequality can be improved by controlling the difference between the two sides of the inequality in terms of the distance of the function to the set of extremal functions. This question, known as the stability problem for the Sobolev inequality, was answered affirmatively by Bianchi and Egnell who proved in \cite{BE91} the following stability estimate: there exists a constant $c_{BE} > 0$ such that
\begin{equation}\label{StabilityS}
\int_{\mathbb{R}^{n}}\left\vert \nabla u(x)\right\vert ^{2}dx-S_{n}\left(
\int_{\mathbb{R}^{n}}|u(x)|^{\frac{2n}{n-2}}dx\right)  ^{\frac{n-2}{n}}\geq
c_{BE}\inf_{U\in E_{Sob}}\int_{\mathbb{R}^{n}}\left\vert \nabla\left(
u(x)-U(x)\right)  \right\vert ^{2}dx
\end{equation}
Here $S_{n}$ is the sharp Sobolev
constant and $E_{Sob}$ is the manifold of the optimizers of the Sobolev
inequality. In other words, the Sobolev deficit controls the squared distance in the gradient norm to the manifold of optimizers. This is optimal in terms of the powers involved and the metric used. See \cite{FN19, FZ22}. The Brezis-Lieb question and the Bianchi-Egnell answer have initiated a deep line of research on the quantitative stability of functional and geometric inequalities measuring how close a function is to the optimizers in terms of their deficit. The literature on this topic is extensive. Therefore, we refer the interested reader to \cite{BWW03, BDNN20, CF13, CLT23, CFW13, CFMP09, FJ1, FJ2, FN19, FZ22, LW99}, for instance, for detailed studies on the stability of Sobolev-type inequalities.  

It is worth noting that the stability constants and the attainability of the corresponding stability inequalities have generally been unexplored in the literature. In particular, precise information on the stability constant $c_{BE}$ was mostly unavailable until very recently. In a recent paper \cite{DEFFL}, Dolbeault, Esteban, Figalli, Frank, and Loss pioneered the rigorous study of the constant $c_{BE}$ in \eqref{StabilityS}. They provided optimal lower bounds for $c_{BE}$ in the asymptotic regime as the dimension $n \uparrow \infty$, and established stability results for the Gross Gaussian log-Sobolev inequality \cite{Gro75} as an application. One of the major contributions in \cite{DEFFL} is to develop a gradient flow method to pass from the local stability to the global stability of the Sobolev inequality.    More recently, Chen, Tang, and the third author studied explicit lower bounds for stability in Hardy-Littlewood-Sobolev inequalities, deducing also explicit lower bounds for higher and fractional order Sobolev inequalities in \cite{CLT24}. They additionally obtained optimal asymptotic lower bounds in \cite{CLT242, CLT243} for Hardy-Littlewood-Sobolev and higher fractional Sobolev inequalities as 
$n \uparrow \infty$ for $0<s<\frac{n}{2}$. The latter allowed them to derive global stability for the log-Sobolev inequality on the sphere, originally established by Beckner \cite{Beckner93}, and to sharpen earlier local stability results from \cite{CLT23}.
More recently, the authors Chen, et al established in \cite{CLTW} the optimal stability of the Sobolev inequality on the Heisenberg group.
Due to the failure of the P\'olya-Szeg\"{o} inequality and the Riesz rearrangement inequality in the Heisenberg group,  both the gradient flow or integral flow strategy in \cite{DEFFL, CLT24, CLT242, CLT243} no longer work on the Heisenberg group. Therefore, the authors of \cite{CLTW} developed a new method of employing the CR Yamabe flow to pass from the local stability to the global stability.

In their investigation of the stability of the Heisenberg Uncertainty Principle (HUP), McCurdy and Venkatraman employed the concentration-compactness techniques and proved in \cite{MV21} the existence of universal constants $C_{1} > 0$ and $C_{2}(n) > 0$ such that  
$$
\delta_{2}(u) \geq C_{1}\left(\int_{\mathbb{R}^{n}} |u(x)|^{2}\,dx\right) d_{1}^{2}(u, E_{HUP}) + C_{2}(n)d_{1}^{4}(u, E_{HUP}).
$$
Here, the HUP deficit is defined by  
$$
\delta_{2}(u) := \left(\int_{\mathbb{R}^{n}} |\nabla u(x)|^{2} \, dx\right) \left(\int_{\mathbb{R}^{n}} |x|^{2}|u(x)|^{2} \, dx\right) - \frac{n^{2}}{4}\left(\int_{\mathbb{R}^{n}} |u(x)|^{2} \, dx\right)^{2},$$ 
and
$$E_{HUP}:=\left\{  \alpha e^{-\beta\left\vert x\right\vert ^{2}}:\alpha
\in\mathbb{R}\text{, }\beta>0\right\}  $$ 
is the set of the optimizers of the HUP
\eqref{HUP}, and the distance from $u$ to a set $A$ is given by  
$$
d_{1}(u, A) := \inf_{v \in A} \|u - v\|_{2}.
$$
Consequently, a small deficit $\delta_{2}(u) \approx 0$ implies that $u$ is close in $L^{2}(\mathbb{R}^{n})$ to a Gaussian of the form $u \approx \alpha e^{-\beta |x|^{2}}$ for some $\alpha \in \mathbb{R}$, $\beta > 0$.

A simpler, constructive proof was later provided by Fathi in \cite{F21}, yielding explicit values $C_{1} = \frac{1}{4}$ and $C_{2} = \frac{1}{16}$, although these constants are not sharp.  
Subsequently, the authors in \cite{CFLL24} derived the following sharp stability result for the HUP.

\begin{theorema}
\label{A}
There holds
$$
\delta_{1}(u) := \left(\int_{\mathbb{R}^{n}} |\nabla u(x)|^{2} \, dx\right)^{\frac{1}{2}} \left(\int_{\mathbb{R}^{n}} |x|^{2}|u(x)|^{2} \, dx\right)^{\frac{1}{2}} - \frac{n}{2}\int_{\mathbb{R}^{n}} |u(x)|^{2} \, dx \geq d_{1}^{2}(u, E_{HUP}).
$$
Moreover, the inequality is sharp, and equality can be attained by nontrivial functions $u \notin E_{HUP}$.
\end{theorema}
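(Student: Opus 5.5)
The plan is to reduce the stability estimate to the Gaussian Poincaré inequality after a dilation that balances the two factors in the deficit. I work with real-valued $u$ in the natural space (completion of $C_c^\infty(\mathbb{R}^n)$ under the HUP norms) and argue for $u\in C_c^\infty(\mathbb{R}^n)$. The general case then follows by density, since both sides of the inequality are continuous in that norm.

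\emph{Step 1 (normalization by dilation).} For $\lambda>0$ set $u_\lambda(x)=\lambda^{n/2}u(\lambda x)$. Then $\|u_\lambda\|_2=\|u\|_2$, $\|\nabla u_\lambda\|_2=\lambda\|\nabla u\|_2$ and $\|xu_\lambda\|_2=\lambda^{-1}\|xu\|_2$, so $\delta_1(u_\lambda)=\delta_1(u)$. The set $E_{HUP}$ is invariant under these $L^2$-isometric dilations, so $d_1(u_\lambda,E_{HUP})=d_1(u,E_{HUP})$. Choosing $\lambda$ appropriately, I may assume $\|\nabla u\|_2=\|xu\|_2$.

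\emph{Step 2 (completing the square).} Integration by parts gives $2\int u\,x\cdot\nabla u\,dx=-n\int u^2\,dx$. Hence
\begin{equation*}
\int_{\mathbb{R}^n}|\nabla u+xu|^2\,dx=\|\nabla u\|_2^2+\|xu\|_2^2-n\|u\|_2^2 .
\end{equation*}
Under the normalization of Step 1 the right-hand side equals $2\delta_1(u)$. Now write $u=v\,e^{-|x|^2/2}$. Then $\nabla u+xu=e^{-|x|^2/2}\nabla v$, and so
\begin{equation*}
2\delta_1(u)=\int_{\mathbb{R}^n}|\nabla v|^2e^{-|x|^2}\,dx .
\end{equation*}

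\emph{Step 3 (Gaussian Poincaré).} For the measure $d\gamma=\pi^{-n/2}e^{-|x|^2}dx$, rescaling the standard Gaussian Poincaré inequality gives
\begin{equation*}
\int|v-c|^2\,d\gamma\le\tfrac12\int|\nabla v|^2\,d\gamma,\qquad c=\int v\,d\gamma .
\end{equation*}
Consequently
\begin{equation*}
\|u-c\,e^{-|x|^2/2}\|_2^2=\int|v-c|^2e^{-|x|^2}\,dx\le\delta_1(u).
\end{equation*}
Since $c\,e^{-|x|^2/2}\in E_{HUP}$, this yields $d_1^2(u,E_{HUP})\le\delta_1(u)$. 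The case $c=0$ is harmless, because the infimum over $\alpha\in\mathbb{R}$ includes values tending to $0$.

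\emph{Step 4 (sharpness).} Equality in the Gaussian Poincaré inequality holds exactly for affine $v$, that is, for first-order Hermite functions. Take $u=x_1e^{-|x|^2/2}$. This $u$ is invariant under the Fourier transform up to a constant factor, so $\|\nabla u\|_2=\|xu\|_2$ holds automatically and Steps 2--3 give $\delta_1(u)=\|u\|_2^2$. Moreover, $u$ is odd in $x_1$, so it is $L^2$-orthogonal to every radial Gaussian. Hence $d_1^2(u,E_{HUP})=\|u\|_2^2$, equality holds, and $u\notin E_{HUP}$.

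The main obstacle, and the key idea of the proof, is Step 2: one must see that after balancing the two factors the product deficit becomes the exact quadratic quantity $\tfrac12\|\nabla u+xu\|_2^2$. Its conjugation by the Gaussian is precisely the Dirichlet form to which Poincaré applies. The remaining points, namely the density argument and the verification that the infimum over $\beta$ in the sharpness example does not beat $\beta=\tfrac12$ (handled by orthogonality), are routine.
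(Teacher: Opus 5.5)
Your proof is correct and follows essentially the route the paper attributes to \cite{CFLL24}, which the paper also uses for its orthant analogue (Theorem~\ref{T2}): the scale non-invariant identity $\|\nabla u\|_2^2+\|xu\|_2^2-n\|u\|_2^2=\int|\nabla(ue^{|x|^2/2})|^2e^{-|x|^2}\,dx$ combined with the scaled Gaussian Poincaré inequality, the only cosmetic difference being that you balance the two factors by an $L^2$-isometric dilation of $u$ instead of optimizing the parameter $\alpha$ as in Corollary~\ref{C:orthant-identity}. Your equality example $x_1e^{-|x|^2/2}$, together with the orthogonality argument showing that its distance to $E_{HUP}$ is exactly $\|u\|_2$, correctly settles the sharpness claim.
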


A key ingredient in \cite{CFLL24} is the establishment of scale non-invariant HUP identity together with the  following Gaussian-type Poincaré inequality: for all \(\lambda \neq 0\),
\[
\int_{\mathbb{R}^{n}} |\nabla u(x)|^{2} e^{-\frac{1}{2|\lambda|^{2}}|x|^{2}} \, dx \geq \frac{1}{|\lambda|^{2}} \inf_{c} \int_{\mathbb{R}^{n}} |u(x) - c|^{2} e^{-\frac{1}{2|\lambda|^{2}}|x|^{2}} \, dx.
\]

When \(\lambda = 1\), this reduces to the classical Gaussian Poincaré inequality, a fundamental tool in the analysis of Gaussian measures. A comprehensive reference on this topic is the monograph by Bakry, Gentil, and Ledoux \cite{BGL14}. We also refer the reader to the seminal work of Leonard Gross \cite{Gro75} from the 1970s, which laid the foundations for the study of logarithmic Sobolev and Gaussian Poincaré inequalities.

As a corollary, Theorem \ref{A} implies that
$$
\delta_{2}(u) \geq n\left(\int_{\mathbb{R}^{n}} |u(x)|^{2} \, dx\right)d_{1}^{2}(u, E_{HUP}) + d_{1}^{4}(u, E_{HUP}),
$$
where equality again holds for some nontrivial functions $u \notin E_{HUP}$. 

In \cite{LLR25}, the stability of the HUP has been studied further. More clearly, the authors in \cite{LLR25} established the following stability result for the stability of the HUP (Theorem \ref{A}):

\begin{theorema}
\label{B}
There holds
$$
\delta_{1}\left(  u\right)  -d_{1}^{2}(u,E_{HUP})\geq d_{1}^{2}(u,F).
$$
\end{theorema}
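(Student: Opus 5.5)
This is a cited result from \cite{LLR25}, and the set $F$ has not been defined at this point of the paper. I will \emph{assume} that $F$ is the natural next layer of functions beyond $E_{HUP}$:
\[
F=\{(c+b\cdot x)e^{-\beta|x|^2}: c\in\mathbb{R},\ b\in\mathbb{R}^n,\ \beta>0\}.
\]
This is exactly the family on which equality occurs in Theorem \ref{A}, and the argument below is built around it. The plan is to redo the proof of Theorem \ref{A} from \cite{CFLL24} through an exact spectral identity, and to keep one more term of the expansion than the Gaussian Poincaré inequality retains.

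\textbf{Step 1: reduce the deficit to a weighted Dirichlet integral.} By density and homogeneity, take $u$ nice with $A=\int|\nabla u|^2$, $B=\int|x|^2u^2$, and set $a=\sqrt{A/B}>0$. Expanding the square and integrating by parts gives
\[
\int|\nabla u+axu|^2\,dx=A+a^2B-na\int u^2\,dx .
\]
Write $u=v\,e^{-a|x|^2/2}$, so that $\nabla u+axu=e^{-a|x|^2/2}\nabla v$. With $w=e^{-a|x|^2}$ this yields the exact identity
\[
\delta_1(u)=\frac{1}{2a}\int|\nabla v|^2\,w\,dx .
\]

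\textbf{Step 2: spectral decomposition.} Expand $v$ in $L^2(w\,dx)$ in the Hermite chaos $v=\sum_{k\ge0}P_kv$, where $P_kv$ is homogeneous Hermite of degree $k$ adapted to $w$. Each $P_kv$ is an eigenfunction of the associated Ornstein--Uhlenbeck operator with eigenvalue $2ak$, so
\[
\int|\nabla v|^2w=\sum_k 2ak\|P_kv\|_w^2 .
\]
Consequently
\[
\delta_1(u)=\sum_{k\ge1}k\|P_kv\|_w^2\ \ge\ \|P_1v\|_w^2+2\|P_{\ge2}v\|_w^2 .
\]

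\textbf{Step 3: compare with the two distances.} Take $c=P_0v$, a constant. Orthogonality gives
\[
d_1^2(u,E_{HUP})\le\|u-c\,e^{-a|x|^2/2}\|_2^2=\|v-c\|_w^2=\|P_1v\|_w^2+\|P_{\ge2}v\|_w^2 .
\]
Subtracting this from the bound in Step 2,
\[
\delta_1(u)-d_1^2(u,E_{HUP})\ge\|P_{\ge2}v\|_w^2 .
\]
Now $P_0v+P_1v=c+b\cdot x$ for some $b\in\mathbb{R}^n$. Hence
\[
\|P_{\ge2}v\|_w^2=\|u-(c+b\cdot x)e^{-a|x|^2/2}\|_2^2\ge d_1^2(u,F),
\]
with $\beta=a/2$. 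This is the claimed inequality.

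\textbf{Main obstacle.} The delicate point is justifying Step 1 for general $u$ in the natural energy class. This requires checking that the boundary terms in the integration by parts vanish, and that $v=u\,e^{a|x|^2/2}$ has finite weighted Dirichlet energy, so that the Hermite expansion is legitimate. I would handle this by proving the identity for $C_c^\infty$ functions, where every step is elementary, and then passing to the limit in the norm defining the function space. The parameter $a$ depends continuously on $u$, which makes the approximation compatible with the choice $a=\sqrt{A/B}$. Lower semicontinuity of both distances under $L^2$ convergence then carries the inequality to the limit.

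The only other choice to confirm is the definition of $F$. If the paper defines $F$ differently, Step 3 should be adjusted to project onto whatever finite chaos $F$ represents.
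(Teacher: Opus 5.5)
Your proof is correct, and your guess for $F$ matches the definition the paper gives immediately after the statement. It is essentially the route the paper attributes to \cite{LLR25} and mirrors in its final orthant theorem: the optimally scaled HUP identity turns $\delta_1(u)$ into $\frac{1}{2a}\int|\nabla v|^2e^{-a|x|^2}\,dx$, and your Hermite-chaos bound $\sum_k k\|P_kv\|_w^2\ge\|P_{\ge1}v\|_w^2+\|P_{\ge2}v\|_w^2$ is exactly the second-eigenvalue Gaussian Poincar\'e inequality of Theorem \ref{thm:gaussian-poincare-lambda}, with $c=P_0v$ optimal for both distances at once.
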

Here \[
F:=\left\{  \left(  \alpha+\mathbf{\gamma}\cdot x\right)
e^{-\beta\left\vert x\right\vert ^{2}}:\alpha\in\mathbb{R}\text{,
}\mathbf{\gamma}\in\mathbb{R}^{n}\text{, }\beta>0\right\}.
\]

Motivated by the stability results in \cite{CFLL24, LLR25} and our HUP on orthants (Theorem \ref{T:HeisOrth}), our next primary aim is to investigate a stability result for the inequality (\ref{E:HUPorthant}). In other words, we would like to prove a sharp lower bound for the deficit in the inequality (\ref{E:HUPorthant}) in terms of the distance of $u$ from the set of optimizers. More explicitly, define the Heisenberg deficit on an orthant by
\begin{equation}\label{HeisenbergDeficit}
    \rho_{1}(u)
    = 
    \left( \int_{{\Rnkp}} |\nabla u(x)|^{2}\,dx \right)^{\frac{1}{2}}
    \left( \int_{{\Rnkp}} |x|^{2} |u(x)|^{2}\,dx \right)^{\frac{1}{2}}
    - \frac{n+2k}{2} \int_{{\Rnkp}} |u(x)|^{2}\,dx
\end{equation}
and let $\tilde{E}=\left\{\left(\prod_{i=n-k+1}^{n} x_i\right)\alpha e^{-\beta |x|^2}:  \alpha \in \mbr, \beta>0\right\}$ be the set of all the optimizers for the HUP on orthants \eqref{E:HUPorthant}.
Then, among many other results, we have:
\begin{theorem}\label{T2}
There holds
\[
\rho_1 \left( u \right) \;\; \ge \;\; \inf_{v \in\tilde{E}} \| u - v \|_{2}^2 .
\]
 The equality holds 
if and only if $u$ is of the form given by
\[
u(x)
= \Bigg(\prod_{i=n-k+1}^{n} x_i\Bigg)
\, e^{-B{|x|^2}}
\Bigg(\sum_{i=1}^{n-k} b_i x_i + b
\Bigg),
\qquad b_1,\dots,b_{n-k},b\in\mathbb{R}, \text{ and } B>0.
\] 
\end{theorem}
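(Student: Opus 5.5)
The plan is to adapt the scale non-invariant identity of \cite{CFLL24} to the orthant, with Dirichlet conditions on the faces $\{x_i=0\}$ handled by odd reflection. By density it suffices to take $u\in C_c^\infty(\Rnkp)$, so that $u$ vanishes on $\partial\Rnkp$. Write $A=\int_{\Rnkp}|\nabla u|^2\,dx$, $B=\int_{\Rnkp}|x|^2u^2\,dx$ and $N=\int_{\Rnkp}u^2\,dx$, and set $P(x)=\prod_{i=n-k+1}^n x_i$.

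\emph{Step 1: the identity.} For $\beta>0$, expand the square and integrate the cross term by parts; the boundary terms vanish because $u=0$ on the faces. This gives
\begin{equation*}
\int_{\Rnkp}|\nabla u+2\beta x u|^2\,dx = A+4\beta^2B-2\beta nN .
\end{equation*}
Writing $u=w\,e^{-\beta|x|^2}$, the left-hand side equals $\int_{\Rnkp}|\nabla w|^2e^{-2\beta|x|^2}\,dx$. Now choose $2\beta=\sqrt{A/B}$. Then $A+4\beta^2B=2A=4\beta\sqrt{AB}$, and the identity becomes
\begin{equation*}
\int_{\Rnkp}|\nabla w|^2e^{-2\beta|x|^2}\,dx = 4\beta\Big(\sqrt{AB}-\tfrac n2 N\Big).
\end{equation*}

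\emph{Step 2: the spectral gap.} Consider the Ornstein--Uhlenbeck form $\int|\nabla w|^2\,d\mu_\beta$, where $d\mu_\beta=e^{-2\beta|x|^2}dx$, on functions vanishing on the faces $x_i=0$, $i>n-k$. Extending $w$ oddly in each of the last $k$ variables identifies this space with the subspace of $L^2(\R^n,\mu_\beta)$ spanned by Hermite polynomials that are odd in each of $x_{n-k+1},\dots,x_n$. The eigenvalue of a Hermite monomial of total degree $d$ is $4\beta d$. Hence:
\begin{itemize}
\item the bottom of the spectrum is $4\beta k$, with eigenfunction $P$;
\item the next eigenvalue is $4\beta(k+1)$, with eigenspace spanned by $x_jP$ for $j\le n-k$;
\item all other eigenfunctions have eigenvalue at least $4\beta(k+2)$.
\end{itemize}
Let $cP$ be the $L^2(\mu_\beta)$-projection of $w$ onto $\mathrm{span}\{P\}$. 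The spectral decomposition then gives
\begin{equation*}
\int|\nabla w|^2\,d\mu_\beta \ge 4\beta k\int w^2\,d\mu_\beta+4\beta\int|w-cP|^2\,d\mu_\beta .
\end{equation*}
Translating back to $u$, this reads $4\beta\big(kN+\|u-cPe^{-\beta|x|^2}\|_2^2\big)$.

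\emph{Step 3: conclusion and equality.} Combining Steps 1 and 2 gives
\begin{equation*}
\sqrt{AB}-\tfrac{n+2k}{2}N \ge \|u-cPe^{-\beta|x|^2}\|_2^2 \ge \inf_{v\in\tilde E}\|u-v\|_2^2 ,
\end{equation*}
which is the claim. For equality, both inequalities must be equalities. In the spectral step this forces $w\in\mathrm{span}\{P,x_1P,\dots,x_{n-k}P\}$, i.e.\ $u$ has the stated form with $B=\beta$. One must also check that for such $u$ the infimum over all of $\tilde E$, over every $\alpha$ and every Gaussian parameter, is attained at $cPe^{-\beta|x|^2}$. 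To see this, write $u=bPe^{-\beta|x|^2}+\sum_j b_jx_jPe^{-\beta|x|^2}$. The odd part $\sum_j b_jx_jPe^{-\beta|x|^2}$ is orthogonal to every element of $\tilde E$, since each element of $\tilde E$ is even in $x_1,\dots,x_{n-k}$. So it suffices to show that the distance from $bPe^{-\beta|x|^2}$ to $\tilde E$ is zero, which is trivial. Finally, check the converse: for these functions $2\beta=\sqrt{A/B}$ holds, so every inequality above is an equality.

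The main obstacle I expect is the rigor in Step 2 on $S(\Rnkp)$. One must justify the density/approximation argument and show that the odd-reflection Hermite expansion applies to $w=ue^{\beta|x|^2}$, i.e.\ that $w\in L^2(\mu_\beta)$ with finite Dirichlet energy and vanishing traces. I would handle this by approximating with $C_c^\infty$ functions and noting that for these $w$ is smooth with compact support. A secondary point is to confirm that the chosen $\beta$ is admissible, which needs $B>0$ and holds for $u\neq0$, and that the identity remains valid after passing to the completion.
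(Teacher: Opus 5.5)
Your proof is correct, but it takes a different route from the paper's.

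\textbf{How the two arguments differ.} The paper first writes $u=Pv$ with $P=\prod_{i=n-k+1}^n x_i$. It then uses the orthant identity of Corollary~\ref{C:orthant-identity}, in which the constant $\frac{n+2k}{2}$ is already built in. Next it applies the Gaussian Poincar\'e inequality with monomial weight $P^2$ (Theorem~\ref{SPoin}), which it proves by lifting $v$ to $\R^{n+2k}$ as a function radial in $k$ three-dimensional blocks. The equality case is read off from Corollary~\ref{cor:eq-Poincare-weighted}.

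You instead keep the plain Euclidean identity with constant $\frac n2$. You obtain both the extra $k$ and the remainder from one spectral fact: after odd reflection, the Ornstein--Uhlenbeck form on the orthant has ground level $4\beta k$ with eigenfunction $P$, and gap $4\beta$ with next eigenspace spanned by $x_jP$, $j\le n-k$.

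The two arguments agree at the level of quadratic forms through the ground-state identity
$\int|\nabla(Pv)|^2e^{-2\beta|x|^2}dx=\int|\nabla v|^2P^2e^{-2\beta|x|^2}dx+4\beta k\int P^2v^2e^{-2\beta|x|^2}dx$.
So your odd reflection in $\R^n$ does the job of the paper's dimension-raising lift.

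\textbf{What each route buys.} The lift gives generality: it handles arbitrary integer monomial weights and is reused elsewhere in the paper. Your route is self-contained and lives naturally in the form domain. It therefore covers all of $S(\Rnkp)$, including the extremals, which are not compactly supported; the paper argues only for $u\in C^\infty_c$. Your equality analysis is also more complete. The paper writes $\inf_{C,\lambda}$ even though $\lambda$ is fixed by $u$. It also never checks that, for the claimed extremals, the infimum over all of $\tilde E$ is attained at the projection $bPe^{-\beta|x|^2}$. Your parity/orthogonality argument settles that point.

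\textbf{The one step you only assert.} You claim $\int|\nabla u|^2=4\beta^2\int|x|^2u^2$ for $u=Pe^{-\beta|x|^2}(b+\sum_j b_jx_j)$, i.e.\ that the optimal scale matches the Gaussian parameter. This needs one line. Take $\phi=Qe^{-\beta|x|^2}$ with $Q\in\{P,x_jP\}$, which is harmonic, homogeneous of degree $d$, and zero on the faces. Then $x\cdot\nabla Q=dQ$, and integrating by parts gives $\int|\nabla Q|^2e^{-2\beta|x|^2}dx=4\beta d\int Q^2e^{-2\beta|x|^2}dx$. Together these yield $\int|\nabla\phi|^2=4\beta^2\int|x|^2\phi^2$. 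The cross terms between the pieces vanish by parity in $x_1,\dots,x_{n-k}$.

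Also rename your $B=\int|x|^2u^2$, since it clashes with the Gaussian parameter $B$ in the statement.
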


Our paper is organized as follows: In Section 2, we will provide some important calculus on orthants and prove some useful lemmas. In Section 3, we will prove Theorem \ref{T:HeisOrth} and determine the optimal constant as well as all possible optimizers for the HUP on orthants. Some stability estimates for the HUP on orthants will be established in Section 4.
  
\section{Integration formulas on Orthants}

Recall that by an {\em orthant} we mean subset of $\R^n$ of the form
$$
    \R_{k,+}^n:=\R^{n-k}\times \R_{>0}^k,
$$
where $k\in\{1,\ldots, n\}$ and $\R_{>0}=(0,\infty)$.  By a ``wall'' of this orthant we mean a set of the form
$$
    \{x\in \overline{\R_{k,+}^n}\,:\, x_i=0\}
$$
for some $i\in \{n-k+1,\ldots, n\}$. In more general notation,
for $A\subset\{1,\ldots, n\}$, we have an orthant
\begin{equation}\label{E:RnAp}
    \RnAp:=\{x\in\R^n\,:\,\hbox{$ x_i>0$ for $i\in A$}\}.
\end{equation}

In this section, we  will establish some integration formulas for functions on $\Rnkp$ that will be used in converting inequalities and identities on $\Rnkp$ to corresponding relations on a full Euclidean space, and vice versa.


We display a point $x\in\mathbb{R}^n$ as $x=(x',x'')$ with $x'\in\mathbb{R}^{\,n-k}$ and $x''\in\mathbb{R}^{\,k}$; on the orthant we thus have $x''>0$ componentwise; in this notation,
$$x''_i=x_{n-k+i}.$$
The key strategy (from  \cite[Proof of Lemma 2.1]{SuYang2012})  is to obtain $\Rnkp$ as an image of the full Euclidean space  in the following way:
$$
    P:  \R^{n-k}\times\R^{2l_1+1}\times\ldots\times \R^{2l_k+1}   \to {\Rnkp}: (x', y_1,\ldots, y_k)\mapsto (x', |y_1|,\ldots,|y_k|),
$$
where  $y_j=(y_{j,1},\ldots, y_{j, 2l_j+1})$ has norm:
$$|y_j|:= \sqrt{y_{j,1}^2+\ldots +y_{j, 2l_j+1}^2}.$$
We need each $2l_j$ to be a positive integer, and we write
$$l=(l_1,\ldots, l_k)\quad\hbox{and}\quad |l|=l_1+\ldots +l_k.$$
We  lift a given function $v$ on $\Rnkp$ to the function $\tlv$ on $\R^{n+2|l|}$ given by 
\begin{equation}\label{E:deftlv}
 \tlv\bigl(x',y\bigr):= (v\circ P)(x',y)=v(x',|y_1|,\dots,|y_k|).
\end{equation}
 In addition, we will frequently use the function
\begin{equation}\label{E:defuxvx}
    u(x)=\left(\prod_{i=n-k+1}^{n} x_i^{l_i}\right)\, v(x)\qquad\text{for\, } x\in\Rnkp.
\end{equation}
The notation may be generalized as follows. Let $A\subset\{1,\ldots, n\}$, and choose half-integers $l_i\geq 0 $, with $l_i=0$ for $i\notin A$. Then  we have a covering map
\begin{equation}\label{E:defPA}
    P_A:\R^{n-|A|}\times\prod_{i\in A}\R^{2l_i+1}\to\RnAp: x\mapsto P_Ax,
\end{equation}
where $(P_Ax)_i=x_i$ if $i\notin A$ and $(P_Ax)_i=|x_i|$ if $i\in A$.

We can state the main integration formulas now.

\begin{theorem}\label{T:integration}
Let $v$ be a measurable function on $\Rnkp$, and let $\tlv$ and $u$ be defined as above. Then
\begin{equation}\label{E:inttlvx'y}
\begin{split}
\int_{\R^{n+2|l|}} |\tlv(x', y)|^2\,dx' dy 
          &= \left(\prod_{i=n-k+1}^n|\mathbb{S}^{2l_i}|\right)\int_{\Rnkp}|v(x)|^2\left(\prod_{i\in\{n-k+1,\ldots, n\}}x_i^{2l_i}\right)\,dx\\
          &=\left(\prod_{i=n-k+1}^n|\mathbb{S}^{2l_i}|\right)\int_{\Rnkp}|u(x)|^2 \,dx.
          \end{split}
          \end{equation}
    More generally,
    \begin{equation}\label{E:inttlvx'yT}
 \int_{\R^{n+2|l|}} \left(|x'|^2+\sum_{i=1}^k|y_i|^2\right)^{a}|\tlv(x',y)|^2\,dx' dy  =\left(\prod_{i=n-k+1}^n|\mathbb{S}^{2l_i}|\right)\int_{\Rnkp} |x|^{2a}|u(x)|^2 \,dx,
          \end{equation}
          for any $a\in\R$.
          Now suppose $v\in C^\infty_c(\Rnkp)$, vanishing in a neighborhood of the walls of $\Rnkp$. Then
         \begin{equation}\label{E:intnabtlv2nabluT}
      \begin{split}
          &\left(\prod_{i=n-k+1}^n|\mathbb{S}^{2l_i}|\right)^{-1}\int_{\mathbb{R}^{n+2|l|}} |x|^{2b} |\nabla \tlv(x',y)|^2  \, dx' \, dy \\
     & =\int_{\Rnkp}\left[|x|^{2b}|\nabla u|^2+\sum_{i=n-k+1}^n\frac{l_i(l_i-1)}{x_i^2} |u(x)|^2+2b|l||x|^{2b-2}|u(x)|^2\right]\,dx,
      \end{split}
  \end{equation}
  for all $b\in\R$.
\end{theorem}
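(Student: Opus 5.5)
The first two identities follow from polar coordinates in each factor $\R^{2l_i+1}$ separately. For fixed $x'$ I write $y_i=r_i\omega_i$ with $r_i=|y_i|>0$ and $\omega_i\in\mathbb{S}^{2l_i}$, so $dy_i=r_i^{2l_i}\,dr_i\,d\omega_i$. Since $\tlv(x',y)=v(x',r_1,\ldots,r_k)$ does not depend on the angular variables $\omega_i$, integrating them out contributes the factor $\prod_i|\mathbb{S}^{2l_i}|$. What remains is
\[
\int_{\Rnkp}|v(x)|^2\prod_{i=n-k+1}^n x_i^{2l_i}\,dx ,
\]
after relabeling $r_i=x_{n-k+i}$, and this equals $\int|u|^2$ by \eqref{E:defuxvx}. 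The weighted identity \eqref{E:inttlvx'yT} is proved the same way, because $|x'|^2+\sum|y_i|^2=|P(x',y)|^2$ is also radial in each $y_i$. The null set $\{y_i=0\}$ can be ignored. Tonelli justifies every step for nonnegative measurable integrands, so no integrability assumption is needed.

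For \eqref{E:intnabtlv2nabluT} I take $v\in C_c^\infty(\Rnkp)$ vanishing near the walls. Then $\tlv$ is smooth (it vanishes near each $\{y_i=0\}$) and compactly supported. I compute $\nabla_{y_i}\tlv=\partial_{x_{n-k+i}}v\,(y_i/|y_i|)$, which gives $|\nabla\tlv|^2=|\nabla v|^2\circ P$. Applying the first step with $a$ replaced by the weight $|x|^{2b}$ reduces the left side to
\[
\int_{\Rnkp}|x|^{2b}|\nabla v|^2\,w\,dx, \qquad w=\prod_i x_i^{2l_i}.
\]
The remaining work is to rewrite this integral in terms of $u=w^{1/2}v$. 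I set $\phi=\tfrac12\log w=\sum_i l_i\log x_i$, so that $v=e^{-\phi}u$ and $\nabla v=e^{-\phi}(\nabla u-u\nabla\phi)$. This gives
\[
|\nabla v|^2w=|\nabla u|^2-2u\,\nabla u\cdot\nabla\phi+u^2|\nabla\phi|^2 .
\]
I handle the cross term by integrating by parts:
\[
-\int|x|^{2b}\nabla(u^2)\cdot\nabla\phi=\int u^2\,\mathrm{div}\bigl(|x|^{2b}\nabla\phi\bigr).
\]
There are no boundary terms, because $u$ is compactly supported away from the walls. Now $\partial_i\phi=l_i/x_i$ and $\Delta\phi=-\sum l_i/x_i^2$, and $\nabla|x|^{2b}\cdot\nabla\phi=2b|x|^{2b-2}\sum_i l_i=2b|l||x|^{2b-2}$. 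Collecting terms, the coefficient of $u^2$ is
\[
|x|^{2b}\sum_i\frac{l_i^2-l_i}{x_i^2}+2b|l||x|^{2b-2}.
\]

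This does not match the statement exactly: the claimed formula has no factor $|x|^{2b}$ on the $l_i(l_i-1)/x_i^2$ term. For $b=0$ the two agree, but for general $b$ my computation produces $|x|^{2b}l_i(l_i-1)x_i^{-2}$. This bookkeeping is the step I expect to need the most care. I will redo the divergence computation and check it against the case $b=0$. If the discrepancy persists, I will state the identity with $|x|^{2b}$ multiplying the Hardy-type term, which is presumably what is intended. It is also harmless for the paper's later applications, which seem to use $b=0$ together with the weighted zeroth-order identity.
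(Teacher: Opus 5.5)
Your computation is correct, and the discrepancy is real, not a slip on your side. As printed, \eqref{E:intnabtlv2nabluT} fails whenever $b\neq0$ and some $l_i\notin\{0,1\}$. Take $n=k=1$, $l_1=\tfrac12$, $b=1$. The left side is $\int_0^\infty r^3v'(r)^2\,dr=\int_0^\infty\bigl(x^2u'^2+\tfrac34u^2\bigr)\,dx$. The printed right side is $\int_0^\infty\bigl(x^2u'^2-\tfrac{u^2}{4x^2}+u^2\bigr)\,dx$. They differ by $\tfrac14\int_0^\infty(1-x^{-2})u^2\,dx$, which is nonzero for $u$ supported in $(1,\infty)$.

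The paper's own proof actually yields your version. Its intermediate identity \eqref{E:x2bnabtlv}, obtained from \eqref{E:intftlvDtlv} with $f=|x|^{2b}$, carries $|x|^{2b}$ on the term $\sum_i l_i(l_i-1)x_i^{-2}|u|^2$. The weight is lost only in the final summary \eqref{E:intnabtlv2nablu}. As you guessed, nothing downstream is affected: the identity is only applied with $l_i=1$ (where $l_i(l_i-1)=0$) and $b=0$. So state and prove the corrected identity, with $|x|^{2b}$ multiplying the Hardy term, without hedging.

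Your route to the third identity is genuinely different from the paper's. The paper proceeds in four steps:
\begin{enumerate}
\item It integrates by parts on $\R^{n+2|l|}$ to get $-\int\tlv\,\nabla\cdot(|x|^{2b}\nabla\tlv)$, and treats the drift term by a second integration by parts, producing $b(n+2|l|+2b-2)\int|x|^{2b-2}\tlv^2$.
\item It uses the block-radial Laplacian $\Delta\tlv=\Delta v+\sum_i\frac{2l_i}{x_i}\partial_i v$ and the product rule for $\Delta(w^{1/2}v)$ to replace $\tlv\Delta\tlv$ by $u\bigl(\Delta u-\sum_i l_i(l_i-1)x_i^{-2}u\bigr)$.
\item It pushes down by polar coordinates.
\item It integrates by parts again on the orthant, \eqref{E:intuDudelu}, to turn $u\Delta u$ back into $|\nabla u|^2$; the $b(n+2b-2)$ contributions cancel, leaving $2b|l|$.
\end{enumerate}
You instead use the pointwise identity $|\nabla\tlv|^2=|\nabla v|^2\circ P$ (which the paper itself uses later, in \eqref{E:nablatlvsqdv}) and push down at once. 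Then a single ground-state substitution $v=e^{-\phi}u$ and one integration by parts on the orthant finish the job.

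Your argument needs only first derivatives and fewer integrations by parts, and it makes the structure transparent. The Hardy potential is $|\nabla\phi|^2+\Delta\phi$, and the $2b|l|$ term comes from Euler's relation $x\cdot\nabla\phi=|l|$; this also makes it evident why $|x|^{2b}$ must remain on the Hardy term. The paper's longer route buys the operator-level relation \eqref{E:Delttlvxpyu}. That relation identifies the lifted Laplacian with the Schr\"odinger operator $\Delta-\sum_i l_i(l_i-1)x_i^{-2}$ conjugated by $w^{1/2}$. It also gives the integration-by-parts-free identity \eqref{E:intftlvDtlv} for arbitrary weights $f$; your approach produces neither.
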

We note, for later use, that the set $\{n-k+1,\ldots, n\}$ can be replaced by any subset $A\subset\{1,\ldots, n\}$, with natural adjustment of the formulas above. We will use this more general notation in sections  \ref{s:GPImon} and \ref{s:StabHUPOrth}. In particular, replacing $v(x)$ by the function $v(x)$ in (\ref{E:inttlvx'y}), we have
\begin{equation}\label{E:inttlvx'yA}
    \int_{\R^{n+2|l|}}|\tlv(y)|^2\,dy =\prod_{i\in A}|\mathbb{S}^{2l_i}|\int_{\RnAp}|v(x)|^2x^{2l}\,dx,
\end{equation}
where $l=(l_1,\ldots, l_n)$, with each $l_i$ being a non-negative half-integer and $l_j$ being $0$ for $j\notin A$, and $|l|=\sum_{i=1}^nl_i$

\begin{proof}
From the definition of $\tlv$ and using polar coordinates for integration in each variable $y_i\in\R^{2l_i+1}$, we have:
\begin{align}
        \int_{\R^{n+2|l|}} |\tlv(x', y)|^2\,dx' dy &=  \int_{\Rnkp}|v(x', r_1,\ldots, r_k)|^2\, \prod_{i=n-k+1}^n\left(|\mathbb{S}^{2l_i}| r_i^{2l_i}\,dr_i\right)\, dx' \nonumber\\
        &=  \left(\prod_{i=n-k+1}^n|\mathbb{S}^{2l_i}|\right)\int_{\Rnkp}|v(x)|^2\prod_{i=n-k+1}^nx_i^{2l_i}\,dx \nonumber\\
          &=  \left(\prod_{i=n-k+1}^n|\mathbb{S}^{2l_i}|\right)\int_{\Rnkp}|u(x)|^2 \,dx \label{E:inttlvx'y0}
\end{align}
This proves (\ref{E:inttlvx'y}).

Given a function on $v$ on $\Rnkp$.  We define the function $w$ on $\Rnkp$ given by
$$w(x)=|x|^{a}v(x).$$
Then we have, in place of $\tlv$,  the function ${\tilde w}$ on $\mathbb{R}^{n+2|l|}$  given by:
$${\tilde w}(x',y)= w(x', |y_1|,\ldots, |y_k|)=\left(|x'|^2+\sum_{i=1}^k|y_i|^2\right)^{a/2}\tlv(x',y).$$
Then, applying (\ref{E:inttlvx'y0}), we obtain:
\begin{equation}\label{E:inttlvx'ya0}
    \begin{split}
        &\int_{\R^{n+2|l|}} \left(|x'|^2+\sum_{i=1}^k|y_i|^2\right)^{a}|\tlv(x',y)|^2\,dx' dy \\
        & =  \left(\prod_{i=n-k+1}^n|\mathbb{S}^{2l_i}|\right)\int_{\Rnkp}\Bigl(\prod_{i=n-k+1}^{n} x_i^{l_i}\Bigr)^2|x|^{2a}|v(x)|^2 \,dx\\
        & =  \left(\prod_{i=n-k+1}^n|\mathbb{S}^{2l_i}|\right)\int_{\Rnkp} |x|^{2a}|u(x)|^2 \,dx,
    \end{split}
\end{equation}
with $u(x)$ as in (\ref{E:defuxvx}). This proves (\ref{E:inttlvx'yT}).

Now we work with $v\in C^\infty_c(\Rnkp)$, the function $\tlv$ on $\R^{n+2|l|}$ defined by
       \begin{equation}\label{E:tlvy}
       \begin{split}
            \tlv(x',y) &=v(x', |y_1|,\ldots, |y_k|) \\
            & \qquad\hbox{for all $(x',y)=(x',y_1,\ldots, y_k)\in  \R^{n+2|l|}=\R^{n-k}\times \prod_{i=1}^k\R^{2l_i+1}$,} 
            \end{split}
       \end{equation}
       and the function $u$ on $\Rnkp$ defined by
       \begin{equation}\label{E:defutlv}
       u(x)=\left(\prod_{i=1}^kx_{n-k+i}^{l_i}\right)v(x)\qquad\hbox{for all $x\in\Rnkp$.}
       \end{equation}
Since $v\in C^\infty_c(\Rnkp)$, it has compact support in the orthant $\Rnkp$, and so the value $v(x', |y_1|,\ldots, |y_k|)$ is $0$ whenever $\max_{i}|y_i|$ is small; hence, $\tlv(x',y)$ is  $0$ when $y$ is close to $0$ in $\R^{2|l|}$. Thus, the function $\tlv$ is in $C^\infty_c(\R^{n+2l})$ and is $0$ near $\R^{n-k}\times \{0\}\subset \R^{n}\times \R^{2|l|}$.


Next we compute $\Delta u(x)$ in relation to $\Delta v(x)$:
\begin{align}
    \Delta u(x)  & =\Delta\left(v(x)\prod_{i=n-k+1}^nx_i^{l_i}\right) \nonumber\\
       &=\left(\Delta v(x)\right)\prod_{i=n-k+1}^nx_i^{l_i}+2\sum_{i=n-k+1}^n\frac{\partial v(x)}{\partial x_i}\frac{l_i}{x_i}\prod_{j=n-k+1}^nx_j^{l_j}\nonumber\\
        &\qquad\qquad + v(x) \sum_{i=n-k+1}^n\frac{l_i(l_i-1)}{x_i^2}\prod_{j=n-k+1}^nx_j^{l_j}\nonumber\\
        &=\left[\Delta v(x)+ \sum_{i=n-k+1}^n\left(\frac{2l_i}{x_i}\frac{\partial v(x)}{\partial x_i}+ \frac{l_i(l_i-1)}{x_i^2}v(x)\right)\right]\prod_{j=n-k+1}^nx_j^{l_j}. \label{E:Deltaux}
\end{align}

We compute $\Delta \tlv$. Recall that $\tlv(x',y)$ is a radial function with respect to each variable $y_i$, namely depending on each $y_i$ through $|y_i|$. So we need to use the Laplacian for a radial function:
$$
    \Delta_y f(|y|)=f''(|y|)+\frac{d-1}{|y|}f'(|y|),
$$
where $d$ is the dimension of the space over which $y$ runs. Thus, with $x=(x',x_{n-k+1},\ldots, x_n)=P(x',y)$,
$$
 \Delta {\tlv} (x',y) = \Delta v(x) + \sum_{i=n-k+1}^n\frac{2l_i}{x_{ i}}\frac{\partial v(x)}{\partial  x_{ i}}.
$$
Using this in (\ref{E:Deltaux}), we obtain:
$$
        \Delta u(x) =\left[\Delta \tlv(x',y)+\sum_{i=n-k+1}^n\frac{l_i(l_i-1)}{x_i^2}v(x)\right]\left(\prod_{j=n-k+1}^nx_j^{l_j}\right),
    $$
where $x_{n-k+i}=|y_i|$, 
and this yields:
\begin{equation}\label{E:Delttlvxpyu}
    \begin{split}
        \Delta {\tlv}(x',y) &=\left(\prod_{j=n-k+1}^nx_j^{l_j}\right)^{-1} \Delta u(x) - \sum_{i=n-k+1}^n\frac{l_i(l_i-1)}{x_i^2}v(x)\\
        &=\left(\prod_{j=n-k+1}^nx_j^{l_j}\right)^{-1}\left[\Delta u(x) - \sum_{i=n-k+1}^n\frac{l_i(l_i-1)}{x_i^2}u(x)\right].
    \end{split}
\end{equation}

Next, again with the variable $x_{n-k+i}$ corresponding to $|y_i|$ when switching to polar coordinate integration, and $f=f(x', |y_1|,\ldots, |y_k|)$ any measurable function on $\Rnkp$ for which the integrals below are defined, we have, with $x=(x',|y_1|,\ldots, |y_k|)$,
\begin{align*}
     &  \int_{\R^{n+2|l|}} f(x){\tlv}(x',y)\Delta {\tlv}(x',y)\,dx'\,dy  \\
       & = \int_{\Rnkp} f(x)v(x) \left(\prod_{j=n-k+1}^nx_j^{l_j}\right)^{-1}\left[\Delta u(x) - \sum_{i=n-k+1}^n\frac{l_i(l_i-1)}{x_i^2}u(x)\right] \cdot \\
       &\hskip 2in \cdot \left(\prod_{i=n-k+1}^n|\mathbb{S}^{2l_i}|x_i^{2l_i}dx_i\right)    \,dx \\
        & = \prod_{i=n-k+1}^n|\mathbb{S}^{2l_i}|\int_{\Rnkp} f(x)u(x) \left[\Delta u(x) -\sum_{i=n-k+1}^n\frac{l_i(l_i-1)}{x_i^2}u(x)\right] \cdot \\
       &\hskip 2in \cdot \left(\prod_{i=n-k+1}^n dx_i\right)    \,dx'\\
       &=  \left(\prod_{i=n-k+1}^n|\mathbb{S}^{2l_i}|\right)\int_{\Rnkp} f(x)u(x) \left[\Delta u(x) -\sum_{i=n-k+1}^n\frac{l_i(l_i-1)}{x_i^2}u(x)\right] \,dx.
\end{align*}
Thus
\begin{equation}\label{E:intftlvDtlv}
\begin{split}
\int_{\R^{n+2|l|}} f(x){\tlv}(x',y)\Delta {\tlv}(x',y)\,dx'\,dy  &\\
        &\hskip -2.25in =  \left(\prod_{i=n-k+1}^n|\mathbb{S}^{2l_i}|\right)\int_{\Rnkp} f(x)u(x) \left[\Delta u(x) -\sum_{i=n-k+1}^n\frac{l_i(l_i-1)}{x_i^2}u(x)\right] \,dx.
     \end{split}
  \end{equation}
Note that we only used the transition to polar coordinates, and no integration by parts was used in this calculation. 

We set $f=1$.  Since $v$ vanishes near the walls of $\Rnkp$, we integrate by parts, to obtain, after rearranging terms,
\begin{equation}\label{E:intnablau2}
\begin{split}
& \int_{\R^{n+2|l|}} |\nabla {\tilde v}(x)|^2\,dx  \\
       &\hskip .5in  =\left(\prod_{i=n-k+1}^n|\mathbb{S}^{2l_i}|\right)\int_{\Rnkp}\left[|\nabla u|^2+\sum_{i=n-k+1}^n\frac{l_i(l_i-1)}{x_i^2}|u(x)|^2\right] \,dx.
        \end{split}
       \end{equation}
Next we have, as always with $x_{n-k+i}$ replacing $|y_i|$,
\begin{align*}
        &\int_{\mathbb{R}^{n-k} \times \prod_{i=1}^k\mathbb{R}^{2l_i+1}} |x|^{2b} |\nabla \tlv(x',y)|^2  \, dx' \, dy\\
    &\hskip 1in \qquad\hbox{(where $x=(x', |y_1|,\ldots, |y_k|)\in\Rnkp$)}\\
     &= -\int_{ \mathbb{R}^{n+2|l|}} 
       {\tlv(x',y)}\nabla_{(x',y)}\cdot\left(|x|^{2b} \nabla {\tlv}_{(x',y)}(x',y)\right)\,dx' dy   \\
    &= -\int_{ \mathbb{R}^{n+2|l|}} 
       {\tlv(x',y)} \left[{|x'|^{2b}}\Delta \tlv(x',y)+ 2b|x|^{2b-1}\frac{(x',y)}{|x|}\cdot\nabla \tlv(x',y)\right]\, dx' \, dy  \\
       &\hskip 2in \hbox{(note that $|x|=|(x',y)|$)} \\
    &= -\int_{\mathbb{R}^{n+2|l|}} 
     {\tlv(x',y)}{|x'|^{2b}}  \Delta \tlv(x',y)\,dx'dy \\
     &\hskip 2in -b\int_{\R^{n+2|l|}}|x|^{2b-2}(x',y)\cdot 2{\tlv}(x',y)\nabla \tlv(x',y)\,dx'dy\\
     &= -\int_{\mathbb{R}^{n+2|l|}} 
     {\tlv(x',y)}{|x'|^{2b}}  \Delta \tlv(x',y)\,dx'dy \\
     &\hskip 2in -b\int_{\R^{n+2|l|}}|x|^{2b-2}(x',y)\cdot \nabla \left\vert\tlv(x',y)\right\vert^2\,dx'dy\\
     &= -\int_{\mathbb{R}^{n+2|l|}} 
     {\tlv(x',y)}{|x'|^{2b}}  \Delta \tlv(x',y)\,dx'dy \\
     &\hskip 2in +b\int_{\R^{n+2|l|}}\left[\nabla\cdot\left(|x|^{2b-2}(x',y)\right)\right]\left\vert\tlv(x',y)^2\right\vert\,dx'dy.\\
   \end{align*}
     Continuing further, we have
     \begin{align*}
          &\int_{\mathbb{R}^{n-k} \times \prod_{i=1}^k\mathbb{R}^{2l_i+1}} |x|^{2b} |\nabla \tlv(x',y)|^2  \, dx' \, dy \\
          &= -\int_{\mathbb{R}^{n+2|l|}} 
     {\tlv(x',y)}{|x'|^{2b}}  \Delta \tlv(x',y)\,dx'dy \\
           &\hskip .25in +b\int_{\R^{n+2|l|}}\left[(2b-2)|x|^{2b-3}\frac{(x',y)}{|x|}\cdot (x',y) +|x|^{2b-2}\nabla_{(x',y)}\cdot(x',y)\right]\cdot\\
     &\hskip 4in \cdot\left\vert\tlv(x',y)^2\right\vert\,dx'dy\\
      &\hskip .25in= -\int_{\mathbb{R}^{n+2|l|}} 
     {\tlv(x',y)}{|x'|^{2b}}  \Delta \tlv(x',y)\,dx'dy \\
     &\hskip 1in +b\int_{\R^{n+2|l|}}|x|^{2b-2}\left[(2b-2) +n+2|l|\right]\left\vert\tlv(x',y)\right\vert^2\,dx'dy.
   \end{align*}
We can now switch to polar coordinates in the variables $y_i$, since the integrand at the end depends on $y$ through $(|y_1|,\ldots, |y_k|)$. We have then
\begin{align*}
         &\int_{\mathbb{R}^{n-k} \times \prod_{i=1}^k\mathbb{R}^{2l_i+1}} |x|^{2b} |\nabla \tlv(x',y)|^2  \, dx' \, dy \\
     &  = - 
     \left(\prod_{i=n-k+1}^n|\mathbb{S}^{2l_i}|\right)\int_{\Rnkp} |x|^{2b}u(x) \left[\Delta u(x) -\sum_{i=n-k+1}^n\frac{l_i(l_i-1)}{x_i^2}u(x)\right] \,dx \\
     &\hskip 3in \quad\hbox{(by (\ref{E:intftlvDtlv})}) \\
     &\hskip 1in +\left(\prod_{i=n-k+1}^n|\mathbb{S}^{2l_i}|\right) b\left[(2b-2) +n+2|l|\right]\int_{\Rnkp}|x|^{2b-2} |u(x)|^2 \,dx \\
     &\hskip 3.5in\qquad\hbox{(using (\ref{E:inttlvx'yT})).}
\end{align*}
Thus,
\begin{equation}\label{E:x2bnabtlv}
    \begin{split}
         & \left(\prod_{i=n-k+1}^n|\mathbb{S}^{2l_i}|\right)^{-1}\int_{\mathbb{R}^{n+2|l|}} |x|^{2b} |\nabla \tlv(x',y)|^2  \, dx' \, dy  \\
     & \hskip 1in = -\int_{\Rnkp} |x|^{2b}u(x) \left[\Delta u(x) -\sum_{i=n-k+1}^n\frac{l_i(l_i-1)}{x_i^2}u(x)\right] \,dx   \\
     &\hskip 2in +b(n+2b-2+2|l|)\int_{\Rnkp}|x|^{2b-2}  |u(x)|^2 \,dx.
    \end{split}
\end{equation}
Now we will run the same procedure to convert $u\Delta u$ to $|\nabla u|^2$:
\begin{align*}
        &\int_{\Rnkp}|x|^{2b}u(x)\Delta u(x)\, dx \\
        &= -\int_{\Rnkp}\nabla\left(|x|^{2b}u(x)\right)\cdot\nabla u\,dx\\
        &=-\int_{\Rnkp}\left[\left\{2b|x|^{2b-1}\frac{x}{|x|}u(x)\right\}\cdot\nabla u +|x|^{2b}|\nabla u|^2\right]\,dx\\
        &= -b\int_{\Rnkp}|x|^{2b-2}x\cdot\nabla\left\vert u(x)\right\vert^2\,dx -\int_{\Rnkp}|x|^{2b}|\nabla u|^2\,dx.
         \end{align*}
Integrating by parts, we obtain:
\begin{equation}
    \begin{split}
        &=b\int_{\Rnkp}\nabla\cdot\left(|x|^{2b-2}x\right) \,|u(x)|^2\,dx -\int_{\Rnkp} |x|^{2b}|\nabla u|^2\,dx\\
        &=b\int_{\Rnkp}\left[(2b-2)|x|^{2b-3}\frac{x}{|x|}\cdot x +|x|^{2b-2}\nabla \cdot x\right] |u(x)|^2\,dx \\
        &\hskip 2in -\int_{\Rnkp}|x|^{2b}|\nabla u|^2\,dx\\
        &=b\int_{\Rnkp}\left[(2b-2)|x|^{2b-2}+n|x|^{2b-2}\right]|u(x)|^2\,dx -\int_{\Rnkp}|x|^{2b}|\nabla u|^2\,dx.
    \end{split}
\end{equation}

To summarize,

\begin{equation}\label{E:intuDudelu}
              \int_{\Rnkp}|x|^{2b}u(x)\Delta u(x)\, dx  =b(n+2b-2)\int_{\Rnkp}|x|^{2b-2}|u(x)|^2\,dx -\int_{\Rnkp}|x|^{2b}|\nabla u|^2\,dx.
       \end{equation}
  Using this all the way back in (\ref{E:x2bnabtlv}), we obtain:

\begin{equation}\label{E:intnabtlv2nablu}
 \begin{split}
          &\left(\prod_{i=n-k+1}^n|\mathbb{S}^{2l_i}|\right)^{-1}\int_{\mathbb{R}^{n+2|l|}} |x|^{2b} |\nabla \tlv(x',y)|^2  \, dx' \, dy \\
     & =\int_{\Rnkp}\left[|x|^{2b}|\nabla u|^2+\sum_{i=n-k+1}^n\frac{l_i(l_i-1)}{x_i^2} |u(x)|^2+2b|l||x|^{2b-2}|u(x)|^2\right]\,dx.
      \end{split}
  \end{equation}
  This proves (\ref{E:intnabtlv2nabluT}) and completes the proof of all the identities. \end{proof}
  
  The significance of the formula (\ref{E:intnabtlv2nablu})  is that it converts the integral of the quadratic gradient integral of $u$ on the orthant $\Rnkp$ to a corresponding integral on the full Euclidean space $\R^{n+2|l|}$. This will allow us to translate integral identities and inequalities on the Euclidean space into inequalities for the orthant.

\begin{lemma}\label{II}
    Let $v\in C^\infty_c(\Rnkp)$, and let $\tlv$ be the function on $\R^{n-k}\times(\R^3)^k$ given by
    \begin{equation*}
    \begin{split}
        {\tlv}(x',y_1,\ldots, y_k)&:= v(x', |y_1|,\ldots, |y_k|)\\
        &\hbox{for $x'\in\R^n$ and $y_1,\ldots, y_k\in\R^3$,}
        \end{split}
    \end{equation*}
    and let
    \begin{equation*}
        u(x):=\left(\prod_{i=n-k+1}^{n} x_i\right)\, v(x)\qquad\hbox{for $x\in\Rnkp$.}
    \end{equation*}  
    Then: 
\begin{equation}\label{E:gradtlvgradu1}
\begin{split}
     \int_{\mathbb{R}_{x'}^{n-k} \times \mathbb{R}_y^{3k}} |\nabla \tlv(x',y)|^2 \; dx' dy &= |\mathbb{S}^2|^k \int_{\Rnkp} 
    \left|\nabla\!\left(\frac{u(x)}{\prod_{i=n-k+1}^n x_i}\right)\right|^2
\prod_{i=n-k+1}^n x_i^2 \, dx  \\
&=|\mathbb{S}^2|^k \int_{\Rnkp} 
    \left|\nabla\!u(x)\right|^2 \, dx,
\end{split}
\end{equation}
and,
\begin{equation}\label{E:gradtlvgradu2}
\begin{split}
&\int_{\mathbb{R}_{x'}^{n-k} \times \mathbb{R}_y^{3k}} \tlv(x',y)\left((x',y)\cdot \nabla \tlv(x',y)\right) \; dx' dy\\
&=|\mathbb{S}^2|^k \int_{\Rnkp}\left(\frac{u(x)}{\prod_{i=1}^k x_i}\right) \left[x\cdot \nabla\!\left(\frac{u(x)}{\prod_{i=n-k+1}^n x_{i}}\right)\right] \prod_{i=n-k+1}^nx_i^2 \; dx.
\end{split}
\end{equation}

\end{lemma}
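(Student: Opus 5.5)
Both identities are the special case $l_1=\cdots=l_k=1$ of the lifting machinery in Theorem \ref{T:integration}. With this choice each $y_i$ lives in $\R^{2l_i+1}=\R^3$, so $|\mathbb{S}^{2l_i}|=|\mathbb{S}^2|$ and $\prod_i|\mathbb{S}^{2l_i}|=|\mathbb{S}^2|^k$. The function $u$ of the lemma is exactly the $u$ of (\ref{E:defuxvx}).

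For (\ref{E:gradtlvgradu1}), I would apply (\ref{E:intnabtlv2nabluT}) with $b=0$. The term $2b|l||x|^{2b-2}|u|^2$ then disappears. The Hardy-type terms $\frac{l_i(l_i-1)}{x_i^2}|u|^2$ also vanish, because $l_i(l_i-1)=0$ when $l_i=1$. This is precisely why the dimension $3$ is chosen, and it gives the second equality $\int|\nabla\tlv|^2=|\mathbb{S}^2|^k\int_{\Rnkp}|\nabla u|^2$.

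The first equality follows directly from polar coordinates in each $y_i$. Since $\tlv$ depends on $y_i$ only through $r_i=|y_i|$, we have $|\nabla\tlv(x',y)|^2=|\nabla v(x)|^2$ with $x=(x',|y_1|,\ldots,|y_k|)$. Indeed, $\nabla_{y_i}\tlv=\partial_{x_{n-k+i}}v\,\frac{y_i}{|y_i|}$, which has squared length $|\partial_{x_{n-k+i}}v|^2$. Integrating in polar coordinates produces the weight $|\mathbb{S}^2|^k\prod x_i^2$, and substituting $v=u/\prod x_i$ gives the middle expression.

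For (\ref{E:gradtlvgradu2}), I would use the same pointwise approach. With $x=P(x',y)$ I compute
\[
(x',y)\cdot\nabla\tlv=x'\cdot\nabla_{x'}v+\sum_i y_i\cdot\frac{y_i}{|y_i|}\,\partial_{x_{n-k+i}}v=x\cdot\nabla v(x).
\]
So the integrand equals $v(x)\,(x\cdot\nabla v(x))$, which is a function of $x=P(x',y)$ alone. Integrating in polar coordinates in each $y_i$ then gives $|\mathbb{S}^2|^k\int_{\Rnkp}v\,(x\cdot\nabla v)\prod x_i^2\,dx$, and writing $v=u/\prod_{i=n-k+1}^n x_i$ gives the stated formula. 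The product $\prod_{i=1}^k x_i$ in the first factor of the statement should be read as $\prod_{i=n-k+1}^n x_i$; this looks like an indexing typo in the statement.

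Neither step needs integration by parts, because $v$ has compact support inside the open orthant and $\tlv\in C^\infty_c$ vanishes near $\{y_i=0\}$. The only real care point is the chain-rule computation for the radial dependence, where the singularity of $y_i/|y_i|$ at the origin must be harmless. It is, because $\tlv$ vanishes near $y_i=0$.
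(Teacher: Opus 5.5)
Your proposal is correct and is essentially the paper's argument. For (\ref{E:gradtlvgradu2}) you argue exactly as the paper does: the chain rule gives $(x',y)\cdot\nabla\tlv=x\cdot\nabla v(x)$, and you then pass to polar coordinates in each $y_i\in\R^3$. For (\ref{E:gradtlvgradu1}), both you and the paper obtain $\int|\nabla\tlv|^2=|\mathbb{S}^2|^k\int_{\Rnkp}|\nabla u|^2$ from (\ref{E:intnabtlv2nabluT}) with $b=0$ and $l_i=1$.

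The only difference is which remaining equality is checked by hand. You derive the first equality from the pointwise identity $|\nabla\tlv|^2=|\nabla v|^2\circ P$ together with polar coordinates. The paper instead shows directly on the orthant that the middle expression equals $\int_{\Rnkp}|\nabla u|^2$: the cross terms combine into $-\sum_i\partial_{x_i}(u^2/x_i)$, which integrates to zero. Either choice closes the chain of equalities.
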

\begin{proof} We begin with the right side of (\ref{E:gradtlvgradu1}):

\begin{align*}
      &\int_{\Rnkp} \left|\nabla\left(\frac{u(x)}{\prod_{i=n-k+1}^n x_i}\right)\right|^2\prod_{i=n-k+1}^n x_i^2 \, dx\\
&= \int_{\Rnkp} \left|\frac{\nabla u}{\prod_{i=n-k+1}^n x_i} 
  + \frac{u(x)}{\prod_{i=n-k+1}^n x_i}  \left(0,\ldots,0,-\tfrac{1}{x_{n-k+1}},\ldots,-\tfrac{1}{x_n}\right)\right|^2\prod_{i=n-k+1}^n x_i^2 \, dx\\ 
&= \int_{\Rnkp} |\nabla u|^2 \, dx
   + \int_{\mathbb{R}^k_+} 
     u^2 \left(\sum_{i=n-k+1}^n \frac{1}{x_i^2}\right) dx
   + 2\int_{\mathbb{R}^k_+} 
     u \left(\sum_{i=n-k+1}^n \frac{\partial u}{\partial x_i}\cdot\frac{-1}{x_i}\right) dx\\
&= \int_{\mathbb{R}^k_+} |\nabla u|^2 \, dx
   + \sum_{i=n-k+1}^n \int_{\mathbb{R}^k_+} 
     \left(\frac{u^2}{x_i^2} - \frac{2u}{x_i}\frac{\partial u}{\partial x_i}\right) dx.
    \end{align*}
Continuing further,
\begin{align*}
     & \int_{\Rnkp} \left|\nabla\!\left(\frac{u(x)}{\prod_{i=1}^k x_i}\right)\right|^2\prod_{i=1}^k x_i^2 \, dx\\
&= \int_{\Rnkp} |\nabla u|^2 \, dx
   - \sum_{i=n-k+1}^n \int_{\Rnkp} 
     \frac{\partial}{\partial x_i}\!\left(\frac{u^2}{x_i}\right) dx\\
&= \int_{\Rnkp} |\nabla u|^2 \, dx \\
&=|\mathbb{S}^2|^{-k} \int_{\mathbb{R}_{x'}^{n-k} \times \mathbb{R}_y^{3k}} |\nabla \tlv(x',y)|^2 \; dx' dy
\qquad \text{(using (\ref{E:intnabtlv2nabluT}) for all } l_i=1).
\end{align*}
This establishes the identity (\ref{E:gradtlvgradu1}).

For the second identity, we have
\begin{align*}
       & \int_{\mathbb{R}_{x'}^{n-k} \times \mathbb{R}_y^{3k}}\tlv(x',y)((x',y)\cdot \nabla \tlv(x',y)) \; dx dy\\
       &=  \int_{\mathbb{R}_{x'}^{n-k} \times \mathbb{R}_y^{3k}}\tlv(x',y)\left(\sum_{i=1}^{n-k} x_i\frac{\partial \tlv}{\partial x_i}+\sum_{i=1}^{3k} y_i\frac{\partial \tlv}{\partial y_i}\right) \, dx' dy\\
    &= |\mathbb{S}^2|^k \int_{\Rnkp} v(x)\left(\sum_{i=1}^nx_i\frac{\partial v}{\partial x_i}\right)\prod_{i=n-k+1}^nx_i^2 dx\\
    &\hskip 1in \hbox{(passing to ``polar'' coordinates)}\\
    &=|\mathbb{S}^2|^k \int_{\Rnkp}\left(\frac{u(x)}{\prod_{i=1}^k x_i}\right) x\cdot \nabla\!\left(\frac{u(x)}{\prod_{i=1}^k x_i}\right) \prod_{i=n-k+1}^nx_i^2 dx.
    \end{align*}
This completes the proof of (\ref{E:gradtlvgradu2}).
\end{proof}

\subsection{Functions that vanish on the orthant walls}

We begin with an observation about smooth functions on the closure of an orthant that vanish on the walls of the orthant. Recall that, by the Whitney extension theorem, a $C^\infty$ function on a closed subset of $\R^n$ has a $C^\infty$ extension to a neighborhood of the set.

\begin{lemma}
Let $U$ be the  orthant $\R^{n-k}\times\R^k_+$, and let
  $u\in C^\infty(\bar{U})$ be such that
\[
u(x)=0 \quad  \text{ if $x_j=0$ for some } j\in J:=\{n-k+1,\ldots, n\}.
\tag{$\star$}
\]
Then there exists $v\in C^\infty(\bar{U})$ such that
\begin{equation}\label{E:uprodbaru}
    u(x)=\Big(\prod_{j\in J} x_j\Big)\,v(x)\qquad\text{for all }x\in \bar{U}.
\end{equation}
If  the support of ${u}$ is contained in $U$, then it is equal to the support of $v$. 
\end{lemma}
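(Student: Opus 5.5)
The plan is to divide out the coordinates $x_j$, $j\in J$, one at a time using the Hadamard lemma. Fix $j\in J$ and suppose $w\in C^\infty(\bar U)$ vanishes on $\{x\in\bar U: x_j=0\}$. For $x\in\bar U$, the segment from $(x_1,\ldots,0,\ldots,x_n)$ to $x$ (only the $j$-th coordinate moving from $0$ to $x_j\ge 0$) stays inside $\bar U$, because $\bar U$ is convex in the $x_j$ direction. So we may write
\[
w(x)=\int_0^1\frac{d}{dt}\,w(x_1,\ldots,tx_j,\ldots,x_n)\,dt
= x_j\int_0^1 \partial_j w(x_1,\ldots,tx_j,\ldots,x_n)\,dt
=: x_j\, w_j(x).
\]
Differentiation under the integral sign, with derivatives understood one-sidedly on the boundary, or via a Whitney extension of $w$ to a neighbourhood of $\bar U$, shows that $w_j\in C^\infty(\bar U)$.

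Next we need the key observation that makes the induction work. If $w$ also vanishes on another wall $\{x_i=0\}$ with $i\neq j$, then so does $w_j$. For $x_j>0$ this follows from $w_j=w/x_j$. For $x_j=0$ it follows by continuity, or directly: $\partial_j w$ vanishes on $\{x_i=0\}$ because that wall is invariant under motion in the $x_j$ direction, and the integrand only involves points with $x_i=0$.

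We then start with $u$ and apply this step successively for $j=n-k+1,\ldots,n$. After $k$ steps we obtain $v\in C^\infty(\bar U)$ with $u=\big(\prod_{j\in J}x_j\big)v$ on $\bar U$, which proves \eqref{E:uprodbaru}.

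For the support statement, the product $\prod_{j\in J}x_j$ is nonzero on $U$. Hence $\{u\neq0\}\cap U=\{v\neq 0\}\cap U$, and $v=u/\prod x_j$ on $U$. If $\supp u\subset U$, then $u$ vanishes on a neighbourhood in $\bar U$ of each wall point, since $\supp u$ is closed and disjoint from the walls. Therefore $v=u/\prod x_j$ vanishes on $U$ near each wall point, and by continuity $v$ vanishes at the wall points too. It follows that $\{v\neq0\}=\{u\neq0\}\subset U$, and taking closures gives $\supp v=\supp u$.

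The main obstacle is the smoothness of $w_j$ up to the boundary, including the corners where several walls meet. The cleanest route is to first take a Whitney extension $W$ of $w$ to an open neighbourhood of $\bar U$. The integral formula then defines a smooth function on a neighbourhood that is star-shaped in the $x_j$ direction near $\bar U$, and all derivatives pass under the integral with no one-sided issues. We then restrict this function back to $\bar U$.
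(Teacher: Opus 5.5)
Your proposal is correct and uses the same idea as the paper: the Hadamard-lemma representation $w(x)=x_j\int_0^1\partial_j w(x_1,\ldots,tx_j,\ldots,x_n)\,dt$, together with the fact that $u/\prod_{j\in J}x_j$ vanishes near the walls when $\supp u\subset U$. It is in fact more complete than the paper's argument, which only carries out the case $J=\{n\}$. Your observation that $w_j$ still vanishes on the remaining walls is exactly what is needed to iterate over all $k$ walls, and your check that $v$ vanishes on the walls is what makes the support equality go through.
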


\begin{proof} Let us first establish the statement about supports. A point is not in the support of a function if and only if it has a neighborhood on which the function is $0$. If $p$ is not in $\supp(u)$ then $p$ is  a point on a wall of $U$ or is not in $\supp(u)$. If $\supp(u)\subset U$, then it does not contain any point of any wall, and so a point is outside $\supp(u)$ if and only if it is outside $\supp(\bar{u})$.

We turn now to proving (\ref{E:uprodbaru}). To begin, we consider the case $J=\{n\}$. We will show that $x\mapsto u(x)/x_n$ is $C^\infty(\bar{U})$.
Fix $(x',x_n)\in U$. Then:
\[
u(x',x_n)-u(x',0)=\int_0^1 \frac{d}{dt}\,u(x',t x_n)\,dt
=\int_0^1 \partial_{x_n}u(x',t x_n)\,x_n\,dt.
\]
Since $u(x',0)=0$ by hypothesis, we obtain
\[
u(x',x_n)=x_n \int_0^1 \partial_{n}u(x',t x_n)\,dt.
\]
Define
\[
v(x',x_n):=\int_0^1 \partial_{n}u(x',t x_n)\,dt.
\]
Then $u=x_n v$ pointwise on $U$. The expression for $v$ shows that it is $C^\infty$. 
\end{proof}

    \begin{lemma}
If  $\bar{u} \in C_c^\infty(\mathbb{R}^{n,k}_+)$, then the function $v$ on $\R^{n-k}\times \R^{3k}$
given by
\[v(x',y) = \bar{u}\!\left(x', r_1, \dots, r_k\right), \]
$
\quad r_j := \|(y_{3j-2}, y_{3j-1}, y_{3j})\|=\sqrt{y_{3j-2}^2 + y_{3j-1}^2 + y_{3j}^2},\quad j\in\{1,\ldots, l\}
$
is in $C_c^\infty(\mathbb{R}^{n-k} \times \mathbb{R}^{3k})$.
\end{lemma}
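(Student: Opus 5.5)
The plan is to verify compact support first, and then smoothness, treating each radial variable separately. For support: since $\bar u\in C_c^\infty(\Rnkp)$, its support $K$ is a compact subset of the open orthant. Hence there are constants $0<\delta<R$ such that $\bar u(x',x'')=0$ unless $|x'|\le R$ and $\delta\le x''_j\le R$ for every $j$. Consequently $v(x',y)=0$ unless $|x'|\le R$ and $\delta\le r_j\le R$ for all $j$. This is a compact subset of $\R^{n-k}\times\R^{3k}$, and it is disjoint from the set $\{r_j=0 \text{ for some } j\}$. Moreover, $v$ vanishes identically on the open neighborhood $\{r_j<\delta\text{ for some } j\}$ of that set.

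Smoothness then comes from a case split. Near a point where all $r_j>0$, each map $y\mapsto r_j(y)=\sqrt{y_{3j-2}^2+y_{3j-1}^2+y_{3j}^2}$ is $C^\infty$, because the square root is smooth on $(0,\infty)$. The map $(x',y)\mapsto (x',r_1,\dots,r_k)$ is therefore a smooth map into $\Rnkp$, and $v$, being the composition of $\bar u$ with it, is $C^\infty$ there by the chain rule. Near a point where some $r_j=0$, $v$ is identically zero on a neighborhood (the one described above), so it is trivially smooth there. These two kinds of open sets cover $\R^{n-k}\times\R^{3k}$, so $v\in C^\infty$. Together with the compact support, this gives $v\in C_c^\infty(\R^{n-k}\times\R^{3k})$.

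The only point needing care is the non-smoothness of $|y_j|$ at the origin of each $\R^3$ factor. That obstacle is exactly removed by the fact that $\supp\bar u$ stays a positive distance $\delta$ from the walls, so the plan is to state that distance bound explicitly at the outset. The same argument works verbatim for general $\R^{2l_j+1}$ factors, as already used in the proof of Theorem \ref{T:integration}. I would also note that the index range in the statement should read $j\in\{1,\dots,k\}$.
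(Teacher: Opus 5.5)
Your proof is correct and follows essentially the same route as the paper. Compactness of the support comes from the boundedness of $\supp\bar u$ together with its positive distance $\delta$ from the walls. Smoothness comes from the region where some $r_j<\delta$ (where $v\equiv 0$) and the region where all $r_j>0$ (where $v$ is a composition of smooth maps). Your cover by two open sets is slightly cleaner than the paper's split into $\{r_j<\delta \text{ for some } j\}$ and the closed set $\{r_j\ge\delta \text{ for all } j\}$. Your correction of the index range to $j\in\{1,\dots,k\}$ is also right.
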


\begin{proof} \small{
Consider the continuous mapping
\[
P : \mathbb{R}^{n-k} \times \mathbb{R}^{3k} \to \mathbb{R}^n,
\qquad (x',y) \mapsto (x', r_1,\dots,r_k).
\]
The preimage under $P$ of a closed set is closed, and the preimage 
of a bounded set is also bounded. Therefore, the preimage of a 
compact set is compact. Since $\operatorname{supp}(\tilde{u})$ 
is compact, it follows that $\operatorname{supp}(v)$ is compact.

Moreover, because $\tilde{u}(x)=0$ whenever some $x_i < \delta$ 
for a sufficiently small $\delta>0$, we distinguish two cases. If $r_j < \delta$ for some $j$, then $v=0$ in a neighborhood, hence $v$ is smooth there.  
And If $r_j \geq \delta$ for all $j$, then the map $y^{(j)} \mapsto r_j$ is $C^\infty$, and composing this smooth 
map with $\tilde{u}$ preserves smoothness.  
Thus $v \in C_c^\infty(\mathbb{R}^{n-k} \times \mathbb{R}^{3l})$.}
\end{proof}
   
\section{The Heisenberg Uncertainty Principle on an orthant: Proof of Theorem \ref{T:HeisOrth}}

In this section, we will establish a Heisenberg Uncertainty Principle for functions on $\R_{k,+}^n$ that vanish on the  walls  of the orthant. We will show, in Theorem \ref{T:HeisOrth},  that
\begin{equation}\label{E:HeisOrth0}
\int_{{\Rnkp}} |\nabla u(x)|^2 \, dx  \; \int_{{\Rnkp}} |x|^2 |u(x)|^2 dx
\geq \frac{(n + 2k)^2}{4} \left( \int_{{\Rnkp}} |u(x)|^2 \, dx\right)^2  
\end{equation}
holds for all $u\in S(\R_{k,+}^n)$. 

Unlike the  Heisenberg Uncertainty Principle  on $\R^n$, inequality (\ref{E:HeisOrth0}) is not simply a direct consequence of corresponding inequalities for the (essentially one-dimensional) partial derivatives; this is seen in the fact that the constant in (\ref{E:HeisOrth0}) is higher than the one that applies to all $u\in C^\infty_c(\mbr^n)$. 

\begin{proof}[Proof of Theorem \ref{T:HeisOrth}.]
    By a density argument, we can assume that $u \in C_c^\infty(\mathbb{R}^n_{k, +})$.  Then there exists ${v} \in  C_c^\infty({\Rnkp})$ such that \begin{equation*}
        u(x)=\left(\prod_{i=n-k+1}^{n} x_i\right)    v(x)\qquad\hbox{  for all $x\in\Rnkp$.}
    \end{equation*}
    We lift $v$ to a function on $\R^{n-k}\times (\R^3)^k$  given by
    \begin{equation*}
        \tlv(x',y)=v(x', |y_1|,\ldots, |y_k|)
    \end{equation*}
    for all $x'\in\R^{n-k}$ and with each $y_i$ running over $\R^3$. Note that since $v$ is smooth, of compact support, and vanishes near the boundary of $\Rnkp$, we have $\tlv\in C^\infty_c\bigl(\R^{n-k}\times (\R^3)^k\bigr)$.

    Applying the Heisenberg Uncertainty Principle \eqref{HUP} for $\R^{n-k}\times (\R^3)^k$ to $\tlv$  we have:
    \begin{equation}\label{E:HUPtlv}
    \begin{split}
        &\int_{\mathbb{R}^{n-k} \times\mbr^{3k}} |\nabla \tlv(x',y)|^2 \, dx'dy  \; \int_{\mathbb{R}^{n-k} \times\mbr^{3k}} |(x',y)|^2 |\tlv(x',y)|^2 dx'dy\\
    &\hskip 1in\geq \frac{(n + 2k)^2}{4} \left( \int_{\mathbb{R}^{n-k} \times\mbr^{3k}} |\tlv(x',y)|^2 \, dx' dy\right)^2,
    \end{split}
    \end{equation}
    where we   use the notation
    $$(x',y)=(x',y_1, \ldots, y_k).$$
Recall from (\ref{E:inttlvx'yT}) and  (\ref{E:intnabtlv2nablu}), with each $l_i$ equal to $1$ and $b=0$, that
\begin{equation}\label{E:inttlvx'y2}
 \int_{\R^{n+2k}} \left(|x'|^2+\sum_{i=1}^k|y_i|^2\right)^{a}|\tlv(x',y)|^2\,dx' dy  =\left(\prod_{i=n-k+1}^n|\mathbb{S}^{2}|\right)\int_{\Rnkp} |x|^{2a}|u(x)|^2 \,dx.
          \end{equation}
          and
\begin{equation}\label{E:intnabtlv2nablu2}
 \begin{split}
          \int_{\mathbb{R}^{n+2k}}  |\nabla \tlv(x',y)|^2  \, dx' \, dy &=\left(\prod_{i=n-k+1}^n|\mathbb{S}^{2}|\right)\int_{\Rnkp}\left\vert \nabla u  \right\vert^2\,dx.
      \end{split}
  \end{equation}
  Using these formulas in (\ref{E:HUPtlv}),  with each $l_i$ being $1$, $b=0$ and $a=1$ (as well as the case $a=0$), we obtain, after cancelling the terms involving $|\mathbb{S}^2|$,
  \begin{equation}\label{E:HUPorth2}
      \int_{\Rnkp}|\nabla u|^2\,dx \,\int_{\Rnkp}|x|^2|u(x)|^2\,dx \geq \left(\frac{n+2k}{2}\right)^2\left(\int_{\Rnkp}|u(x)|^2\,dx\right)^2.
  \end{equation}
  Indeed, equality holds in (\ref{E:HUPorth2}) if and only if it holds in the original Heisenberg Uncertainty Principle  (\ref{E:HUPtlv}) on $\R^{n+2k}$. The latter holds if and only if  
  \begin{equation*}
       \tlv(x)=\alpha e^{-\frac{\beta}{2}|x|^2},
  \end{equation*}
  for some constants $\alpha$ and $\beta>0$, and so
  \begin{equation*}
      u(x)= \alpha\Big(\prod_{i=n-k+1}^nx_i\Big)e^{-\frac{\beta}{2}|x|^2} \qquad\hbox{for all $x\in\Rnkp$.}
  \end{equation*}
  Although this function is not in $C^\infty_c(\mbr^{n+2k})$, it belongs to $X(\Rnkp)$. Also, both sides of the Heisenberg Uncertainty Principle (\ref{E:HUPorth2}) are finite and equal for this choice of $u$.\end{proof}

\begin{theorem}\label{T:HUPIO}
    Let $n \geq 1$, $k\in\{1,\ldots, n\}$ and $\alpha \in{ \R \backslash \{0\}}$. Then 
\begin{equation}\label{E:HUPIO}
    \begin{split}
      \alpha^2 \int_{{\Rnkp}} |\nabla u(x)|^2 \, dx  \;+ \frac{1}{\alpha^2} & \int_{{\Rnkp}} |x|^2 |u(x)|^2 dx- (n + 2k) \left( \int_{{\Rnkp}} |u(x)|^2 \, dx\right) \\  
  &=\alpha^2 \int_{\Rnkp} \Bigl| \nabla \!\Bigl(\frac{u}{\pxi} e^{\tfrac{|x|^{2}}{2\alpha^2}}\Bigr) \Bigr|^{2} w^2(x) e^{-\frac{|x|^{2}}{\alpha^2}} dx
    \end{split}
\end{equation}
for all $u \in C^\infty_c(\Rnkp)$, where 
\[w(x)=\prod_{i=n-k+1}^nx_i.\]
\end{theorem}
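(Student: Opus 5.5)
Throughout, write $u = w v$ with $w(x)=\prod_{i=n-k+1}^n x_i$. The plan is to prove the identity either by lifting to $\R^{n+2k}$ or by a direct expansion on the orthant; the direct route is cleaner, so I carry it out and use the lift only as a cross-check. Set $g(x)=\frac{u}{w}e^{|x|^2/(2\alpha^2)} = v\,e^{|x|^2/(2\alpha^2)}$. Then $\nabla g = e^{|x|^2/(2\alpha^2)}\bigl(\nabla v + \tfrac{x}{\alpha^2} v\bigr)$, and the right-hand side becomes
\[
\alpha^2\int_{\Rnkp} \Bigl|\nabla v+\frac{x}{\alpha^2}v\Bigr|^2 w^2\,dx
=\alpha^2\int_{\Rnkp}|\nabla v|^2w^2\,dx+\frac{1}{\alpha^2}\int_{\Rnkp}|x|^2|u|^2\,dx+\int_{\Rnkp} x\cdot\nabla(v^2)\,w^2\,dx .
\]

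For the cross term I integrate by parts; this is legitimate because $v\in C^\infty_c(\Rnkp)$ vanishes near the walls. It gives $-\int v^2\,\mathrm{div}(x\,w^2)\,dx$. Since $x\cdot\nabla(w^2)=2k\,w^2$ (each $x_i\partial_i x_i^2 = 2x_i^2$), we have $\mathrm{div}(xw^2)=(n+2k)w^2$. The cross term is therefore $-(n+2k)\int|u|^2\,dx$, which is exactly the constant term on the left.

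It remains to show $\int|\nabla v|^2w^2\,dx=\int|\nabla u|^2\,dx$. This is precisely the first line of \eqref{E:gradtlvgradu1} in Lemma \ref{II}, namely $\int |\nabla(u/w)|^2 w^2 = \int|\nabla u|^2$, whose proof uses $\sum_i\partial_{x_i}(u^2/x_i)$ integrating to zero. Equivalently, it follows from \eqref{E:intnabtlv2nabluT} with $l_i=1$, $b=0$, where the $l_i(l_i-1)$ terms vanish. Substituting this gives the identity.

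Alternatively, one can apply the scale-dependent HUP identity on $\R^{n+2k}$ from \cite{CFLL24} to $\tlv$ and transfer each term through Theorem \ref{T:integration}. The gradient of $\tlv e^{|\cdot|^2/(2\alpha^2)}$ transfers to $|\nabla g|^2w^2$ by polar coordinates in each $y_i$. This gives the same identity.

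The main obstacle is justifying the boundary terms at the walls. For $u\in C^\infty_c(\Rnkp)$ with support away from the walls this is automatic. If the paper intends $u$ merely vanishing on the walls, one first writes $u=wv$ with $v$ smooth using the factorization lemma, and the integration by parts still has no boundary contribution because $v^2w^2x_i$ vanishes on $\{x_i=0\}$.
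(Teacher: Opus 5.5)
Your proof is correct and is essentially the paper's argument: substitute $u=wv$, expand $\bigl|\nabla v+\tfrac{x}{\alpha^2}v\bigr|^2w^2$, and remove the leftover terms by the divergence theorem using $\operatorname{div}(xw^2)=(n+2k)w^2$. The only difference is bookkeeping. The paper collects everything into a single pointwise identity whose remainder is $\nabla\cdot\bigl(\alpha^2v^2w\nabla w-w^2v^2x\bigr)$, using $\Delta w=0$, whereas you handle the $\nabla w$ terms separately by citing the first identity of Lemma \ref{II}.
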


\begin{proof}
Since $u\in C^\infty_c(\Rnkp)$, we can write $u$ as
\[ u(x)= {w(x)} v(x),\]
where  $v  \in C^\infty_c(\Rnkp)$. 
Then,
    \begin{align*}
      &\alpha^2  |\nabla u(x)|^2 \, + \frac{1}{\alpha^2}|x|^2 \left\vert u(x) \right\vert^2  - (n + 2k) \left\vert u(x) \right\vert^2\\
      &\hskip 1in -\alpha^2  \left| \nabla \!\Bigl(\frac{u(x)}{{w(x)}} e^{\tfrac{|x|^{2}}{2\alpha^2}}\Bigr) \right|^{2} {w(x)}^2 e^{-\frac{|x|^{2}}{\alpha^2}}\\
      &=\alpha^2  \left|\nabla\left({w(x)} v(x)\right)\right|^2 \, + \frac{1}{\alpha^2} |x|^2 \left({w(x)} v(x)\right)^2 \\
      &\hskip .8in- (n + 2k) \left({w(x)} v(x)\right)^2 -\alpha^2 \Bigl| \nabla \!\Bigl( v(x) e^{\tfrac{|x|^{2}}{2\alpha^2}}\Bigr) \Bigr|^{2} {w(x)}^2 e^{-\frac{|x|^{2}}{\alpha^2}}\\
      &=\alpha^2  |\left(\nabla {w(x)}\right)v(x) + \left(\nabla v(x)\right){w(x)}|^2 \, + \frac{1}{\alpha^2} |x|^2 \left({w(x)} v(x)\right)^2 \\
      &- (n + 2k) \left({w(x)} v(x)\right)^2-\alpha^2 \Bigl| \left(\nabla v(x)\right) e^{\tfrac{|x|^{2}}{2\alpha^2}}+ \frac{x}{\alpha^2} v(x)e^{\tfrac{|x|^{2}}{2\alpha^2}}\Bigr|^{2} {w(x)}^2 e^{-\frac{|x|^{2}}{\alpha^2}}\\
      &=\alpha^2  |\bigl(\nabla{w(x)}\bigr) v(x)|^2 + \alpha^2| \bigl(\nabla v(x)\bigr){w(x)}|^2 \,\\
      &+ 2\alpha^2 \bigl(\nabla{w(x)}\bigr) v(x) \cdot \bigl(\nabla v(x)\bigr){w(x)} \, 
      + \frac{1}{\alpha^2} |x|^2 \left({w(x)} v(x)\right)^2\\
      &- (n + 2k) \left({w(x)} v(x)\right)^2-\alpha^2 \Bigl|  \bigl(\nabla v(x)\bigr) {w(x)} + \frac{x}{\alpha^2} v(x) {w(x)} \Bigr|^{2} \\
       &=\alpha^2  |\bigl(\nabla {w(x)}\bigr) v(x)|^2 + \alpha^2| \bigl(\nabla v(x)\bigr){w(x)}|^2 \, + 2\alpha^2 \bigl(\nabla({w(x)}\bigr) v(x) \cdot  \bigl(\nabla v(x)\bigr){w(x)} \, \\
      &\qquad+\frac{1}{\alpha^2} |x|^2 \left({w(x)} v(x)\right)^2 - (n + 2k) \left({w(x)} v(x)\right)^2\\
      &\qquad-\alpha^2 \Bigl|  \bigl(\nabla v(x)\bigr){w(x)} \Bigr|^{2}-  \frac{1}{\alpha^2} \Bigl|xv(x)  {w(x)} \Bigr|^{2} - 2 x\cdot \bigl(v(x) \nabla v(x)\bigr) {w(x)}^2 \\
      &= \alpha^2 \left\{  |\nabla {w(x)}|^2 v(x)^2\, +  {w(x)} \nabla {w(x)} \cdot  \bigl(\nabla v(x)^2\bigr)\right\} \, \\
      & \hskip 1in - (n + 2k) {w(x)}^2 v(x)^2 -  \bigl(x\cdot \nabla v(x)^2\bigr) {w(x)}^2.
    \end{align*}
Continuing further, we then have:
\begin{equation*}
    \begin{split}
   & \alpha^2  |\nabla u(x)|^2 \, + \frac{1}{\alpha^2}|x|^2 \left\vert u(x) \right\vert^2  - (n + 2k) \left\vert u(x) \right\vert^2 \\
      &\hskip 1in -\alpha^2  \left| \nabla \!\Bigl(\frac{u(x)}{{w(x)}} e^{\tfrac{|x|^{2}}{2\alpha^2}}\Bigr) \right|^{2} {w(x)}^2 e^{-\frac{|x|^{2}}{\alpha^2}}\\
      &= \alpha^2 \left\{  |\nabla\cdot \left( v(x)^2 {w(x)}  \nabla{w(x)}\right)-v(x)^2  {w(x)} \Delta {w(x)}\right\} \, \\
      &\qquad -{w(x)}^2 \left(n  v(x)^2 + x\cdot\bigl(\nabla  v(x)^2\bigr)\right) -2k {w(x)}^2 v(x)^2\\
      &= \alpha^2 \left\{ \nabla \left( v(x)^2 {w(x)}  \nabla{w(x)}\right)\right\} \, \\
     &\qquad -{w(x)}^2 \nabla\cdot (x v(x)^2) -2 \nabla {w(x)} \cdot x {w(x)} v(x)^2\\
      &= \alpha^2 \left\{ \nabla\cdot \left( v(x)^2 {w(x)}  \nabla{w(x)}\right)\right\} \,- \nabla\cdot \left({w(x)}^2 x v(x)^2\right) \\
       &= \alpha^2  \nabla\cdot\left\{ v(x)^2 {w(x)}  \nabla{w(x)} \,- {w(x)}^2 x v(x)^2 \right\}.\\
    \end{split}
\end{equation*}
Because $v\in C^\infty_c(\Rnkp)$, the identity (\ref{E:HUPIO}) then follows by applying the divergence theorem.
\end{proof}

The following consequence can be viewed as a scale non-invariant Heisenberg Uncertainty Principle on the orthant $\Rnkp$:

\begin{corollary}\label{C:HUPIO+}
    Let $n \geq 1$, and $k\in\{1,\ldots, n\}$. Then 
\begin{equation}\label{E:HUPIO+}
    \begin{split}
       \int_{\Rnkp} |\nabla u(x)|^2 \, dx  \;+ &\int_{\Rnkp} |x|^2 \left\vert u(x) \right\vert^2 dx- (n + 2k) \left( \int_{\Rnkp} \left\vert u(x) \right\vert^2 \, dx\right) \\  
  &= \int_{\Rnkp} \Bigl| \nabla \!\Bigl(\frac{u(x)}{\prod_{i=1}^k x_i} e^{\tfrac{|x|^{2}}{2}}\Bigr) \Bigr|^{2} e^{-|x|^{2}} \prod_{i=1}^k x_i^2\, dx
    \end{split}
\end{equation}
for all $u \in C^\infty_c({\Rnkp})$. In particular, we have the following scale non-invariant Heisenberg Uncertainty Principle
\begin{equation}\label{E:HUPIO+ineq}
       \int_{\Rnkp} |\nabla u|^2 \, dx  \;+  \int_{\Rnkp} |x|^2 \left\vert u(x) \right\vert^2 dx- (n + 2k) \left( \int_{\Rnkp} \left\vert u(x) \right\vert^2 \, dx\right) \\  
   \geq 0,
\end{equation}
for all $u \in C^\infty_c({\Rnkp})$.
\end{corollary}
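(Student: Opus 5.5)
This corollary follows from Theorem \ref{T:HUPIO} by specializing the free parameter to $\alpha=1$. No new integration by parts is needed.

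\emph{The identity.} Put $\alpha^2=1$ in (\ref{E:HUPIO}). The left side becomes exactly the left side of (\ref{E:HUPIO+}). On the right, the weight $w(x)^2e^{-|x|^2/\alpha^2}$ becomes $w(x)^2 e^{-|x|^2}$. The function inside the gradient becomes $\frac{u}{w}e^{|x|^2/2}$, where $w(x)=\prod_{i=n-k+1}^n x_i$.

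\emph{Matching the notation.} The statement writes $\prod_{i=1}^k x_i$ for this product of the $k$ constrained coordinates, just as in Lemma \ref{II}. I will say explicitly that it denotes $w(x)=\prod_{i=n-k+1}^n x_i$, the product of the coordinates that are positive on $\Rnkp$. This convention matters: only with the wall coordinates is the quotient $u/w$ smooth and compactly supported in $\Rnkp$. That is what Theorem \ref{T:HUPIO} uses when it applies the divergence theorem.

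\emph{Well-definedness.} Since $u\in C^\infty_c(\Rnkp)$, its support is a compact set inside the open orthant. So $w$ is bounded away from zero on the support, and $v=u/w\in C^\infty_c(\Rnkp)$. All integrands are therefore smooth, compactly supported and finite, and the specialization to $\alpha=1$ is legitimate.

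\emph{The inequality.} The right side of (\ref{E:HUPIO+}) is the integral of a square times the nonnegative weight $w^2e^{-|x|^2}$. Hence it is nonnegative, which gives (\ref{E:HUPIO+ineq}) for all $u\in C^\infty_c(\Rnkp)$.

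The only potential obstacle is the index notation in the product. Once it is identified with $w$, the argument is a direct substitution.
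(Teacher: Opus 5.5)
Your proposal is correct and is the paper's own proof: the paper also just sets $\alpha=1$ in Theorem~\ref{T:HUPIO}, and (\ref{E:HUPIO+ineq}) then follows because the right-hand side is nonnegative. Your reading of $\prod_{i=1}^k x_i$ as $w(x)=\prod_{i=n-k+1}^n x_i$ is the intended one and worth stating. For $k<n$ the literal first-$k$ product vanishes inside $\Rnkp$, so the right side would generally be infinite.
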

\begin{proof}
   This follows from Theorem \ref{T:HUPIO}, on setting   $\alpha =1 $. 
\end{proof}

We also obtain the following ``Heisenberg Uncertainty Identity'' on the orthant $\Rnkp$:

\begin{corollary}\label{C:orthant-identity}
Let $n \geq 1$, and $k\in\{1,\ldots, n\}$. Let \[
\alpha = \left( \frac{\int_{\Rnkp} |x|^{2} |u(x)|^{2}\, dx}
{\int_{\Rnkp} |\nabla u|^{2}\, dx} \right)^{\tfrac{1}{4}}.
\]  Then 
\begin{equation}\label{eq:hupi}
    \begin{split}
        \Big(\int_{\Rnkp} |\nabla u(x)|^2 \, dx\Big)^\frac{1}{2} \; & \Big(\int_{\Rnkp} |x|^2 \left\vert u(x) \right\vert^2 dx\Big)^\frac{1}{2}- \frac{(n + 2k)}{2} \left( \int_{\Rnkp} \left\vert u(x) \right\vert^2 \, dx\right)\\  
  &= \frac{\alpha^2}{2}\int_{\Rnkp}  | \nabla \Big( \frac{u(x)}{\prod_{i=n-k+1}^n x_i} e^{\frac{|x|^2}{\alpha^2}} \Big) \Big|^2 e^{-\frac{1}{2\alpha^2}|x|^2}\prod_{i=n-k+1}^n x_i^2 dx
    \end{split}
\end{equation}
for all $u \in C_c^{\infty}({\Rnkp})$.
\end{corollary}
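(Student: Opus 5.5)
The plan is to obtain the identity by specializing the parameter in Theorem \ref{T:HUPIO}; no new integration by parts is needed. Fix $u\in C_c^\infty(\Rnkp)$ and set
\[
A:=\int_{\Rnkp}|\nabla u|^2\,dx,\qquad B:=\int_{\Rnkp}|x|^2|u(x)|^2\,dx,\qquad N:=\int_{\Rnkp}|u(x)|^2\,dx .
\]

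First dispose of the degenerate case. If $A=0$, then $u$ is constant, and since it has compact support, $u\equiv 0$. Then both sides of \eqref{eq:hupi} vanish, with any admissible interpretation of $\alpha$. So assume $u\not\equiv0$. Then $A>0$ and $B>0$, and $\alpha=(B/A)^{1/4}$ is a well-defined nonzero real number, so Theorem \ref{T:HUPIO} applies with this $\alpha$.

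The key step is the choice $\alpha^4=B/A$, which balances the two terms on the left of \eqref{E:HUPIO}. It gives $\alpha^2A=\sqrt{AB}$ and $\alpha^{-2}B=\sqrt{AB}$. Hence the left-hand side of \eqref{E:HUPIO} equals
\[
2\sqrt{AB}-(n+2k)N .
\]
Dividing \eqref{E:HUPIO} by $2$ turns the left side into exactly the left side of \eqref{eq:hupi}, namely $A^{1/2}B^{1/2}-\frac{n+2k}{2}N$. The right side becomes $\frac{\alpha^2}{2}$ times the weighted Gaussian gradient integral of $u/\prod_{i=n-k+1}^n x_i$ from Theorem \ref{T:HUPIO}, with weight $w^2=\prod_{i=n-k+1}^n x_i^2$. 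This is the right-hand side of \eqref{eq:hupi}.

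The main point requiring care is bookkeeping rather than analysis. The Gaussian factors on the right must be exactly those of Theorem \ref{T:HUPIO}, that is, $e^{|x|^2/(2\alpha^2)}$ inside the gradient and $e^{-|x|^2/\alpha^2}$ as the weight. The exponents in the display \eqref{eq:hupi} should be read in this form, since that is what the specialization produces and what makes the two sides agree. A quick consistency check is that the integrand vanishes precisely when $u/w$ is a multiple of $e^{-|x|^2/(2\alpha^2)}$, matching the optimizers in Theorem \ref{T:HeisOrth}. As a byproduct, the identity shows that the deficit $\rho_1(u)$ is nonnegative, giving the sharp inequality of Theorem \ref{T:HeisOrth} again for $u\in C_c^\infty(\Rnkp)$.
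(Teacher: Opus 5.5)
Your proposal is correct and is the paper's argument: substitute $\alpha=(B/A)^{1/4}$ into the identity of Theorem~\ref{T:HUPIO}, so that $\alpha^2A=\alpha^{-2}B=\sqrt{AB}$, and divide by $2$. Your reading of the Gaussian factors ($e^{|x|^2/(2\alpha^2)}$ inside the gradient, $e^{-|x|^2/\alpha^2}$ as the weight) is the right one---the exponents in the printed display are typos, and the paper itself uses your form when it applies this identity in the proof of Theorem~\ref{T2}---and your separate treatment of $u\equiv 0$ covers a case the paper leaves implicit.
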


\begin{proof}
The result follows from the identity \eqref{E:HUPIO} in Theorem \ref{T:HUPIO} by choosing the optimal $\alpha$ on the left hand side of \eqref{E:HUPIO}.  Namely, choosing
\[
\alpha = \left( \frac{\int_{\Rnkp} |x|^{2} |u(x)|^{2}\, dx}
{\int_{\Rnkp} |\nabla u(x)|^{2}\, dx} \right)^{\tfrac{1}{4}}.
\]  
 
\end{proof}

Next, we identify the optimizers of the Heisenberg Uncertainty Principle on the orthant:

\begin{corollary}\label{C:orthant-equality}
Let $n \geq 1$, and $k\in\{1,\ldots, n\}$. Then the Heisenberg Uncertainty Principle inequality
\begin{equation}\label{HI}
\Big(\!\int_{\Rnkp} |\nabla u(x)|^2 dx\Big)^{\!1/2}
\Big(\!\int_{{\Rnkp}} |x|^2 \left\vert u(x) \right\vert^2 dx\Big)^{\!1/2}
\;\ge\; \frac{n+2k}{2}\int_{\Rnkp} \left\vert u(x) \right\vert^2 dx
\end{equation}
holds, with equality if and only if
\[
u \in \tilde{E}
:=\Big\{\Big(\prod_{i=n-k+1}^{n}x_i\Big)\,\alpha\, e^{-\beta |x|^2}
:\; \alpha\in\mathbb{R},\ \beta>0\Big\}.
\]
\end{corollary}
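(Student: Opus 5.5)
The inequality (\ref{HI}) is simply the square root of (\ref{E:HUPorthant}), so it suffices to read it off from the identity in Corollary \ref{C:orthant-identity}. The plan is to use that identity for $u\in C^\infty_c(\Rnkp)$ and pass to $S(\Rnkp)$ by density. Equality cases are then read off from the vanishing of the right-hand side.

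\emph{Inequality.} Take $u\in C^\infty_c(\Rnkp)$ with $u\not\equiv 0$, so that $\int|\nabla u|^2>0$ and $\alpha$ in Corollary \ref{C:orthant-identity} is well defined. The right-hand side of (\ref{eq:hupi}) is manifestly nonnegative, which gives (\ref{HI}). An equivalent route is to minimize the left side of (\ref{E:HUPIO}) over $\alpha$, using $\min_{\alpha}\bigl(\alpha^2A+\alpha^{-2}B\bigr)=2\sqrt{AB}$. For general $u\in S(\Rnkp)$, approximate by $u_j\in C^\infty_c(\Rnkp)$ in the norm defining $S(\Rnkp)$. Both $\int|\nabla u_j|^2$ and $\int|x|^2|u_j|^2$ converge. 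Since (\ref{HI}) controls $\|u\|_2^2$ by the product of these two quantities, $\|u_j\|_2$ converges as well, and the inequality passes to the limit.

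\emph{Equality, sufficiency.} For $u=w\,\alpha e^{-\beta|x|^2}$ with $w=\prod_{i=n-k+1}^n x_i$, I would either compute directly with Gaussian moments or invoke the lifting argument of Theorem \ref{T:HeisOrth}. The lift of $v=\alpha e^{-\beta|x|^2}$ to $\R^{n+2k}$ is a Gaussian, for which (\ref{HUP}) is an equality. The identities (\ref{E:inttlvx'y2}) and (\ref{E:intnabtlv2nablu2}) then transfer this equality to the orthant, provided they are justified for Gaussians by a cutoff approximation.

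\emph{Equality, necessity.} This is the main obstacle, since the identity is only proved for compactly supported $u$, while an equality case is a general element of $S(\Rnkp)$. I would first extend (\ref{eq:hupi}) to all of $S(\Rnkp)$ as an inequality, namely
\[
\mathrm{LHS}\ \ge\ \tfrac{\alpha^2}{2}\int_{\Rnkp}\Bigl|\nabla\bigl(v\,e^{|x|^2/(2\alpha^2)}\bigr)\Bigr|^2 e^{-|x|^2/\alpha^2}w^2\,dx,
\]
with $v=u/w$ and the Gaussian weights matched as in (\ref{E:HUPIO}). This follows from Fatou's lemma along an approximating sequence, because $\alpha_j\to\alpha$ and $v_j\to v$ locally in $H^1$ away from the walls. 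If equality holds in (\ref{HI}), the right side vanishes, so $\nabla\bigl(v e^{|x|^2/(2\alpha^2)}\bigr)=0$ a.e. on the connected open set $\Rnkp$, where $w>0$. Hence $v e^{|x|^2/(2\alpha^2)}$ is constant, and $u=c\,w\,e^{-|x|^2/(2\alpha^2)}\in\tilde E$ with $\beta=1/(2\alpha^2)>0$.

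An alternative for necessity is to lift $u$ to $\tlv\in S(\R^{n+2k})$ via Theorem \ref{T:integration}, with equality preserved, and then quote the known characterization of equality in the Euclidean HUP from \cite{CFLL24}, which forces $\tlv$ to be Gaussian. In that route the same density step, extending the identities of Theorem \ref{T:integration} to $S(\Rnkp)$, is the only technical point.
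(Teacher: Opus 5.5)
Your proposal is correct and follows the paper's own argument: the paper also reads off (\ref{HI}) from the scale-optimized identity of Corollary~\ref{C:orthant-identity}, which is the minimization over $\alpha$ in (\ref{E:HUPIO}). It likewise characterizes equality by the vanishing of $\nabla\bigl(\frac{u}{w}e^{|x|^2/(2\lambda^2)}\bigr)$, and checks sufficiency by taking $\lambda=(2\beta)^{-1/2}$. Your Fatou-based extension of the identity (as an inequality) from $C^\infty_c(\Rnkp)$ to $S(\Rnkp)$ is a worthwhile addition: the paper applies the compactly supported identity directly to the non-compactly supported extremals without justifying that step.
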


\begin{proof}
By Corollary~\ref{C:orthant-identity}, for fixed  
\[
\lambda = \left( \frac{\int_{\Rnkp} |x|^{2} |u(x)|^{2}\, dx}
{\int_{\Rnkp} |\nabla u|^{2}\, dx} \right)^{\tfrac{1}{4}},  
\]  
we have 
\begin{equation*}
    \begin{split}
        &\Big(\!\int_{\Rnkp} |\nabla u(x)|^2\,dx\Big)^{\!1/2}\Big(\!\int_{\Rnkp} |x|^2 \left\vert u(x) \right\vert^2\,dx\Big)^{\!1/2}-\frac{n+2k}{2}\int_{\Rnkp} \left\vert u(x) \right\vert^2\,dx\\
& \hskip 1in =\frac{\lambda^2}{2}\!\int_{\Rnkp}
\Big|\nabla\!\Big(\frac{u(x)}{\prod_{i=n-k+1}^{n}x_i}
e^{\frac{|x|^2}{2\lambda^2}}\Big)\Big|^2
e^{-\frac{|x|^2}{2\lambda^2}}\!\prod_{i=n-k+1}^{n}x_i^2\,dx \;\ge 0.
    \end{split}
\end{equation*}

Hence the equality (\ref{HI}) holds if and only the integrand vanishes a.e.; that is,
\[
\nabla\!\Big(\frac{u(x)}{\prod_{i=n-k+1}^{n}x_i}e^{\frac{|x|^2}{2\lambda^2}}\Big)=0,
\]
which gives
\(
u(x)=C\big(\prod_{i=n-k+1}^{n}x_i\big)e^{-\frac{|x|^2}{2\lambda^2}}
\)
for some \(C\in\mathbb{R}\).
Writing \(\beta=\frac{1}{2\lambda^2}>0\) yields the stated form.
Conversely, for \(u(x)=\big(\prod_{i=n-k+1}^{n}x_i\big)\lambda_0 e^{-\beta|x|^2}\) and
\(\lambda=(2\beta)^{-1/2}\), the right-hand side above is zero, so equality holds.
\end{proof}

\section{The Gaussian Poincaré Inequality with monomial weights}\label{s:GPImon}

In the following, we work with an element
\begin{equation*}
    A=(a_1,\ldots,a_n)\in {\mathbb R}_{\ge 0}^n,
    \end{equation*}
and we denote by $\left\vert A \right\vert :=\sum_{i=1}^{n}a_i$, and by $\RnAp$ the following subset of $\R^n$:
\begin{equation}\label{E:RnAp2}
    \RnAp:=\{x\in\R^n\,:\, \hbox{$x_i>0$ if $a_i>0$}\}.
\end{equation}
(There is a slight abuse of notation from (\ref{E:RnAp}), but the relationship between (\ref{E:RnAp}) and (\ref{E:RnAp2}) is clear.)
Let $\mu_A$ and, for $\lambda>0$, $\mu_{A,\lambda}$, be the measures given by:
\begin{equation}\label{E:defmuAlam}
\begin{split}
    d\mu_A(x)&= \frac{x^A e^{-\frac{1}{2}|x|^2}\,dx}{\int_{\RnAp}x^A e^{-\frac{1}{2}|x|^2} dx}\\
    d\mu_{\lambda,A}(x)
&=\frac{x^A e^{-\frac{|x|^2}{2\lambda^2}}\,dx}
{\int_{\RnAp}x^A e^{-\frac{|x|^2}{2\lambda^2}}dx},
\end{split}
\end{equation} 
where
\begin{equation}\label{E:xA}
    x^A= \prod_{i=1}^{n} x^{a_i}.
\end{equation}
In the following, we denote by $\mathcal{X}_A$ the closure of $C^\infty_c(\RnAp)$ in the norm
\begin{equation*}
    \|u\|_A:=\left (\int_{\RnAp} |u|^2\,d\mu_A +\int_{\RnAp}|\nabla u|^2\,d\mu_A \right )^{\frac{1}{2}},
\end{equation*}
and by $\mathcal{X}_{A,\lambda}$ the closure of $C^\infty_c(\RnAp)$ in the norm
\begin{equation*}
    \|u\|_{A,\lambda}:=\left (\int_{\RnAp} |u|^2\,d\mu_{A,\lambda} +\int_{\RnAp}|\nabla u|^2\,d\mu_{A,\lambda} \right )^{\frac{1}{2}}.
\end{equation*}

The \emph{classical Gaussian Poincaré inequality} states that for every smooth function 
\(u:\mathbb{R}^N \to \mathbb{R}\) and the normalized Gaussian measure
\[
d\mu(x) = \frac{e^{-\frac{|x|^2}{2}}}{\int_{\mathbb{R}^N} e^{-\frac{|x|^2}{2}} dx} \, dx,
\]
we have
\[
\int_{\mathbb{R}^N} |\nabla u|^2 \, d\mu
\geq
\int_{\mathbb{R}^N} \left| u - \int_{\mathbb{R}^N} u \, d\mu \right|^2 d\mu.
\]
The equality holds \emph{if and only if}
\[
u(x) = \mathbf{a} \cdot x + b, \qquad \mathbf{a} \in \mathbb{R}^N, \quad b \in \mathbb{R}.
\]
That is, the optimizers are precisely the affine linear functions, corresponding to the first non-constant eigenspace of the Ornstein-Uhlenbeck operator. The proof of the Gaussian Poincaré inequality follows from the spectral analysis of the Ornstein-Uhlenbeck operator; see \cite{LLR25}, for instance. 
Moreover, the linearization of the logarithmic Sobolev inequality with respect to the Gaussian measure established by Leonard Gross in \cite{Gro75} also implies the Gaussian Poincaré inequality.

A \textit{scaled version} of the Gaussian Poincaré inequality holds for the measure
\[
d\mu_\lambda(x)
=\frac{e^{-\frac{|x|^2}{2\lambda^2}}}{\int_{\mathbb{R}^N}e^{-\frac{|x|^2}{2\lambda^2}}dx}\,dx,
\qquad \lambda>0,
\]
in which case
\begin{equation}\label{E:poincare}
    \int_{\mathbb{R}^N}|\nabla u|^2\,d\mu_\lambda
\;\ge\;
\frac{1}{\lambda^2}
\int_{\mathbb{R}^N}\!\Big|u-\!\int_{\mathbb{R}^N}\!u\,d\mu_\lambda\Big|^2 d\mu_\lambda,
\end{equation}
and the equality again holds if and only if \(u(x)=a\cdot x+b\).
The scaling parameter \(\lambda\) simply rescales the measure and does not change the optimizer structure. The following results were proved by Lam, Lu and Russanov \cite{LLR25}.

\begin{theorem}[{\cite{LLR25}}]\label{T:gaussian-poincare-weighted}
For all \(u\in \mathcal{X}_A\), we have
\begin{equation}\label{GaussianPoincareMonomial}
\int_{\RnAp}|\nabla u|^2\,d\mu_A
\;\ge\;
\int_{\RnAp}\!\Big|u-\!\int_{\RnAp}\!u\,d\mu_A\Big|^2 d\mu_A.
\end{equation}
Moreover, if \(x^A\) is partial, that is when there exists some $i$ such that $a_i=0$, then the equality in \eqref{GaussianPoincareMonomial} can be attained by non-constant functions.
\end{theorem}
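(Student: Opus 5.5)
The plan is to reduce everything to one–dimensional statements and then tensorize. The normalized measure $\mu_A$ from \eqref{E:defmuAlam} is a product measure,
\[
d\mu_A(x)=\prod_{i=1}^n d\nu_{a_i}(x_i),\qquad d\nu_a(t)=c_a\, t^{a}e^{-t^2/2}\,dt \ \text{on } (0,\infty)\ (a>0),
\]
and $d\nu_0(t)=(2\pi)^{-1/2}e^{-t^2/2}\,dt$ on $\R$. It is a classical fact that the Poincaré inequality (variance bounded by the Dirichlet energy) tensorizes over product measures, with constant equal to the worst one-dimensional constant. This follows by the standard martingale/Efron–Stein decomposition $\mathrm{Var}_{\mu}(u)\le\sum_i \int \mathrm{Var}_{\nu_{a_i}}(u)\,d\mu$, applied coordinate by coordinate. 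So it suffices to prove, for each $a\ge 0$ and each smooth compactly supported $f$ on the relevant half-line or line,
\[
\int |f'|^2\,d\nu_a\ \ge\ \int \Big|f-\int f\,d\nu_a\Big|^2 d\nu_a .
\]
I will first prove \eqref{GaussianPoincareMonomial} for $u\in C_c^\infty(\RnAp)$ and then pass to $\mathcal X_A$ by density, since both sides are continuous in $\|\cdot\|_A$.

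For the one-dimensional inequality, write $d\nu_a=e^{-V(t)}dt$ with $V(t)=\tfrac{t^2}{2}-a\log t$. Then $V''(t)=1+a/t^2\ge 1$ on the convex domain $(0,\infty)$, and $V''=1$ when $a=0$. The Bakry–Émery criterion, or equivalently the Brascamp–Lieb inequality $\mathrm{Var}(f)\le\int (f')^2/V''\,d\nu_a$, therefore gives Poincaré constant $1$.

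To make this self-contained I would argue via the generator $L f=f''-V'f'=f''-tf'+\tfrac{a}{t}f'$, which is symmetric in $L^2(\nu_a)$. The key computation is the commutation relation $(Lf)'=L(f')-V''f'$. This gives the $\Gamma_2$-bound $\int (Lf)^2\,d\nu_a=\int (f'')^2\,d\nu_a+\int V''(f')^2\,d\nu_a\ge\int (f')^2\,d\nu_a$. From this the spectral gap $\ge 1$ follows by the usual argument: take $g$ with $Lg=f-\int f\,d\nu_a$, then apply Cauchy–Schwarz.

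The main obstacle I expect is justifying the integrations by parts at $t=0$ when $a>0$ and $f$ merely belongs to the closure $\mathcal X_A$. For $f\in C_c^\infty(0,\infty)$ there is no boundary term. Solving $Lg=h$ requires some regularity of $g$ near $0$, and I would handle this by working on the core $C_c^\infty$ and using essential self-adjointness. Alternatively, when $a$ is an even integer $2l$ with $2l$ a positive integer, one can bypass all of this: lift exactly as in Theorem \ref{T:integration}, viewing $t=|y|$ with $y\in\R^{2l+1}$. Then $\nu_a$ becomes the pushforward of the standard Gaussian, $|\nabla\tilde f|=|f'|$, and the classical Gaussian Poincaré inequality applies directly. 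I would treat general $a$ by the Brascamp–Lieb route.

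For the equality statement, suppose some $a_i=0$. I claim that $u(x)=x_i$ attains equality. This coordinate runs over all of $\R$ with a standard Gaussian factor, so $\int x_i\,d\mu_A=0$ by symmetry. Moreover $\int |x_i|^2\,d\mu_A=1=\int |\nabla x_i|^2\,d\mu_A$. A truncation argument shows that $x_i\in\mathcal X_A$, since it lies in the weighted $H^1$ space with Gaussian decay.
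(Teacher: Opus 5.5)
\textbf{Comparison of routes.} Your route differs from the paper's, and for the inequality it is more general. The paper proves this statement only for integer weights. In Theorem~\ref{SPoin} it lifts $v$ to $\R^{a_1+1}\times\cdots\times\R^{a_n+1}$ via $x_i=|y_i|$ and applies the classical Gaussian Poincar\'e inequality there. In Corollary~\ref{cor:eq-Poincare-weighted} it reads off the equality cases from the evenness of the lift in each block $y_i$ with $a_i>0$: evenness kills those linear terms and leaves $c+\sum_{a_i=0}d_ix_i$. Your integer-$a$ fallback is essentially this argument.

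Your main argument uses three ingredients: the product structure of $\mu_A$, Efron--Stein tensorization, and the one-dimensional bound from $V''=1+a/t^2\ge 1$ via Brascamp--Lieb or Bakry--\'Emery. Here $\nu_a$ is the Gaussian times the log-concave factor $t^a\mathbf{1}_{\{t>0\}}$. This proves the inequality for every real $a_i\ge 0$, which the lifting cannot reach. One aside: your self-contained generator route should not rely on essential self-adjointness. For $0<a<3$, $L$ on $C_c^\infty(0,\infty)$ is limit-circle at $0$, because the local solutions of $Lg=0$ (namely $1$ and $t^{1-a}$, or $\log t$ when $a=1$) both lie in $L^2(\nu_a)$ near $0$. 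Either work with the Friedrichs/form extension or simply cite Brascamp--Lieb.

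\textbf{The gap.} The genuine gap is your last sentence, that $x_i\in\mathcal X_A$ ``by truncation''. Truncation at infinity is harmless. However, $\mathcal X_A$ is the closure of $C_c^\infty(\RnAp)$, so you must also cut off near every wall $\{x_j=0\}$ with $a_j>0$. That is, you need $\chi_\varepsilon$ vanishing near $0$, tending to $1$, with $\int_0^1|\chi_\varepsilon'(t)|^2t^{a_j}\,dt\to 0$. This is possible exactly when $\int_0^1 t^{-a_j}\,dt=\infty$, i.e.\ $a_j\ge 1$: use linear cutoffs for $a_j>1$ and logarithmic ones for $a_j=1$. You need to state and check this.

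For $0<a_j<1$ the claim is false. For $f\in C_c^\infty(\RnAp)$,
\[
|f(\hat x,t)|^2=\Big|\int_0^t\partial_jf(\hat x,s)\,ds\Big|^2\le\frac{t^{1-a_j}}{1-a_j}\int_0^1|\partial_jf(\hat x,s)|^2s^{a_j}\,ds .
\]
After integrating in $\hat x$, this bound passes to the closure, so every $u\in\mathcal X_A$ with $\partial_ju=0$ vanishes identically. Now consider equality in \eqref{GaussianPoincareMonomial}, even in the full weighted $H^1(\mu_A)$. Expand in Hermite polynomials in the Gaussian coordinates and in Laguerre polynomials in $x_j^2/2$ (eigenvalues $0,2,4,\dots$) in the others. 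Equality then forces $u=c+\sum_{a_i=0}d_ix_i$, which has $\partial_j u=0$. Hence no nonzero $u\in\mathcal X_A$ attains equality when some $a_j\in(0,1)$.

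So, with this definition of $\mathcal X_A$, the ``moreover'' part cannot be proved for arbitrary real $A$, and no truncation rescues your example. Your argument is complete exactly when every positive $a_j$ is at least $1$. This includes the integer weights, which is all the paper itself proves.
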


\begin{theorem}[{\cite{LLR25}}]\label{thm:gaussian-poincare-lambda}
For \(\lambda>0\) and \(u\in \mathcal{X}_{\emptyset,\lambda}\),
\[
\int_{\mathbb{R}^N}|\nabla u|^2\,d\mu_\lambda
\;\ge\;
\frac{1}{|\lambda|^2}
\inf_{c\in \R,\mathbf{d}\in \R^N}
\int_{\mathbb{R}^N}
\Big(|u-c|^2 + |u-c-\mathbf{d}\cdot x|^2\Big)
d\mu_\lambda,
\]
and the equality can be attained by non-linear functions.
\end{theorem}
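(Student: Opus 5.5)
We reduce to $\lambda=1$ by scaling and then argue spectrally, expanding in Hermite polynomials, i.e.\ in eigenfunctions of the Ornstein--Uhlenbeck operator. The same spectral reasoning underlies the classical Gaussian Poincaré inequality \eqref{E:poincare}. The extra second term on the right-hand side costs nothing, because every Hermite mode of degree $\ge 2$ carries at least weight $2$ in the Dirichlet form.

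\emph{Step 1 (scaling).} For $u\in\mathcal{X}_{\emptyset,\lambda}$ set $f(z)=u(\lambda z)$. The change of variables $x=\lambda z$ turns $\mu_\lambda$ into the standard Gaussian measure $\mu$, and $\nabla u(x)=\lambda^{-1}(\nabla f)(x/\lambda)$. Hence
\[
\int_{\mathbb{R}^N}|\nabla u|^2\,d\mu_\lambda=\frac{1}{\lambda^2}\int_{\mathbb{R}^N}|\nabla f|^2\,d\mu .
\]
Moreover $\int|u-c-\mathbf{d}\cdot x|^2\,d\mu_\lambda=\int|f-c-(\lambda\mathbf{d})\cdot z|^2\,d\mu$. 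As $\mathbf{d}$ ranges over $\mathbb{R}^N$, so does $\lambda\mathbf{d}$, so the infimum is unchanged. It therefore suffices to prove the case $\lambda=1$. By density of $C^\infty_c$ in $\mathcal{X}_{\emptyset,1}$, and since both sides are continuous in that norm (the infimum of continuous functionals is upper semicontinuous, which is the right direction), it also suffices to treat smooth compactly supported $f$.

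\emph{Step 2 (Hermite expansion).} Let $\{H_\alpha\}_{\alpha\in\mathbb{N}^N}$ be the normalized multivariate Hermite polynomials, an orthonormal basis of $L^2(\mu)$. Each $H_\alpha$ is an eigenfunction of the Ornstein--Uhlenbeck operator with eigenvalue $|\alpha|$. Write $f=\sum_\alpha c_\alpha H_\alpha$. Integrating by parts against $\mu$ gives
\[
\int|\nabla f|^2\,d\mu=\sum_\alpha|\alpha|\,c_\alpha^2 .
\]

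\emph{Step 3 (the competitors).} Take $c=c_0$, and choose $\mathbf{d}$ so that $\mathbf{d}\cdot z=\sum_{|\alpha|=1}c_\alpha H_\alpha(z)$; the degree-one Hermite polynomials are just the coordinates $z_i$. Then
\[
\int|f-c|^2\,d\mu=\sum_{|\alpha|\ge1}c_\alpha^2,\qquad \int|f-c-\mathbf{d}\cdot z|^2\,d\mu=\sum_{|\alpha|\ge2}c_\alpha^2 .
\]
The sum of the two is
\[
\sum_{|\alpha|=1}c_\alpha^2+2\sum_{|\alpha|\ge2}c_\alpha^2\;\le\;\sum_\alpha|\alpha|c_\alpha^2 .
\]
The infimum is at most this value, which proves the inequality.

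\emph{Step 4 (equality by nonlinear functions).} Equality in Step 3 holds precisely when $c_\alpha=0$ for all $|\alpha|\ge3$, i.e.\ when $f$ is a polynomial of degree at most $2$. For the equality in the theorem itself, we also need the choice of $(c,\mathbf{d})$ in Step 3 to attain the infimum. This is automatic: the functional is a sum of two convex quadratics in $(c,\mathbf{d})$, and by orthogonality both are minimized at the projection coefficients. Concretely, take $u(x)=|x|^2$, or $u(x)=x_1x_2$, rescaled by $\lambda$; either attains equality and is not affine.

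The only real obstacle is the justification in Step 2: the integration by parts, and the identity $\int|\nabla f|^2\,d\mu=\sum|\alpha|c_\alpha^2$, on the full space $\mathcal{X}_{\emptyset,\lambda}$. For $C^\infty_c$ functions this is the standard Ornstein--Uhlenbeck integration by parts. Extending to the closure is a routine density and Fatou argument, using the continuity noted in Step 1.
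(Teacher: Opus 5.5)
Your proof is correct. The scaling reduction, the identity $\int|\nabla f|^2\,d\mu=\sum_\alpha|\alpha|c_\alpha^2$, the observation that the joint infimum is attained at the Hermite projection coefficients, and the degree-two equality cases (e.g.\ $|x|^2$, $x_1x_2$) make up the spectral Ornstein--Uhlenbeck argument that the paper relies on: it states this result without proof, defers to \cite{LLR25}, and points to the spectral analysis of the Ornstein--Uhlenbeck operator.

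One small correction concerns the density step. Upper semicontinuity of the infimum is the wrong direction for passing the inequality to the limit; you need $\mathrm{RHS}(u)\le\liminf_n\mathrm{RHS}(u_n)$. This does no harm here, because the right-hand side is the sum of the squared $L^2(\mu)$-distances from $f$ to the constants and to the affine functions, so it is genuinely continuous. Moreover, your Fatou argument already gives $\int|\nabla f|^2\,d\mu\ge\sum_\alpha|\alpha|c_\alpha^2$ directly on the closure.
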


In this work, we present an alternative proof of Theorem~\ref{T:gaussian-poincare-weighted} that holds for 
\textit{integer weights} \(A\in\mathbb{Z}^N\), and subsequently extend this approach to establish 
Theorem~\ref{thm:gaussian-poincare-lambda} for the same integer-weighted case \(x^A\).

\begin{theorem}[Scaled Poincar\'e inequality with Gaussian monomial weights]\label{SPoin}
Let $A=(a_1,\ldots,a_n)\in {\mathbb{Z}}_{\geq0}^n$ and $\lambda>0$. 
Then for all $u \; \in \mathcal{X}_{A,\lambda}$, we have 
\[
\int_{\RnAp}|\nabla u|^2\, d\mu_{\lambda,A} 
\;\geq\;
\frac{1}{\lambda^2}
\int_{\RnAp} 
\Big(u(x)-\int_{\RnAp} u(x)\, d\mu_{\lambda,A}\Big)^2 d\mu_{\lambda,A}(x),
\]
where 
\[
d\mu_{\lambda,A}(x)
=\frac{x^A e^{-\frac{|x|^2}{2\lambda^2}}}
{\int_{\RnAp}x^A e^{-\frac{|x|^2}{2\lambda^2}}dx},
\qquad
x^A=\prod_{i=1}^n x_i^{a_i}.
\]

\end{theorem}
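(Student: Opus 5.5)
The plan is to lift the problem to a full Euclidean space, exactly as in the proof of Theorem~\ref{T:HeisOrth}, and then apply the scaled classical Gaussian Poincar\'e inequality \eqref{E:poincare} there. We first reduce to $u\in C^\infty_c(\RnAp)$ by density in $\mathcal{X}_{A,\lambda}$; both sides are continuous in the norm $\|\cdot\|_{A,\lambda}$, so this suffices.

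Let $I=\{i: a_i>0\}$ and put $N=n+\sum_{i\in I}a_i$. Since each $a_i$ is a positive integer, we can write $a_i=2l_i$ with $l_i$ a half-integer. We then use the covering map $P_A$ from \eqref{E:defPA}, which sends $\R^{n-|I|}\times\prod_{i\in I}\R^{a_i+1}$ onto $\RnAp$ by replacing each block $y_i\in\R^{a_i+1}$ with $|y_i|$. We lift $u$ to $\tilde u=u\circ P_A$.

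Because $u$ is compactly supported away from the walls, $\tilde u\in C^\infty_c(\R^N)$, by the same argument as in the lemma on lifted functions (now with spheres $\mathbb{S}^{a_i}$ in place of $\mathbb{S}^2$). We also note that $|P_A z|=|z|$ for $z\in\R^N$, so the Gaussian $e^{-|z|^2/2\lambda^2}$ on $\R^N$ is the lift of $e^{-|x|^2/2\lambda^2}$. The polar-coordinates formula \eqref{E:inttlvx'yA}, applied to any function of the form $f\circ P_A$, therefore gives
\[
\int_{\R^N}(f\circ P_A)\,d\mu_\lambda=\int_{\RnAp}f\,d\mu_{\lambda,A}.
\]
The normalizing constants $\prod|\mathbb{S}^{a_i}|$ cancel between numerator and denominator.

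Applying this with $f=u$ and with $f=u^2$ shows that the mean and the variance of $\tilde u$ under $\mu_\lambda$ equal the mean and variance of $u$ under $\mu_{\lambda,A}$.

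For the gradient term, the key pointwise observation is that $|\nabla\tilde u(z)|=|\nabla u(P_Az)|$. In the $x'$-variables this is immediate. In a block $y_i$, we have $\nabla_{y_i}\tilde u=\partial_i u(P_Az)\,y_i/|y_i|$, whose norm is $|\partial_i u|$. Hence $|\nabla\tilde u|^2=|\nabla u|^2\circ P_A$, and the same integration formula gives
\[
\int_{\R^N}|\nabla\tilde u|^2\,d\mu_\lambda=\int_{\RnAp}|\nabla u|^2\,d\mu_{\lambda,A}.
\]
This step needs no integration by parts and no weight $u=x^l v$ substitution, unlike \eqref{E:intnabtlv2nabluT}, because here the weight sits in the measure rather than being absorbed into $u$.

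Combining these identities with \eqref{E:poincare} on $\R^N$ applied to $\tilde u$ yields the claimed inequality. The main point requiring care is justifying the density reduction, namely that $C^\infty_c(\RnAp)$ functions suffice and that the inequality passes to the closure $\mathcal{X}_{A,\lambda}$. This follows because the Gaussian-weighted $L^2$ norms of $u$ and $\nabla u$ control both sides, and the mean is continuous in $L^2(\mu_{\lambda,A})$, which is a probability measure.

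As a by-product, equality cases can be read off: equality forces $\tilde u$ to be affine on $\R^N$. For a function that is radial in each block $y_i$, this means the linear part vanishes in those blocks, so $u$ is affine in the unconstrained variables $x_j$ with $a_j=0$ and constant in the others. This is consistent with the remark in Theorem~\ref{T:gaussian-poincare-weighted} about partial weights.
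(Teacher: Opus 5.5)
Your proposal is correct and is essentially the paper's proof: you lift through $P_A$ to $\mathbb{R}^{n+|A|}$ with blocks $\mathbb{R}^{a_i+1}$, use the pointwise identity $|\nabla\tilde u|=|\nabla u|\circ P_A$ and the polar-coordinate integration formula to match both sides with the classical scaled Gaussian Poincar\'e inequality for $\tilde u$, and conclude. The only additions are that you make explicit two things the paper leaves implicit: the density passage to $\mathcal{X}_{A,\lambda}$, and the push-forward identity $\int_{\mathbb{R}^N}(f\circ P_A)\,d\mu_\lambda=\int_{\RnAp}f\,d\mu_{\lambda,A}$, including the cancellation of the sphere constants.
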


\begin{proof}
Let $v \in C^\infty_c(\RnAp)$.  
We lift $v$ to a function on 
$\R^{a_1+1} \times \cdots \times \R^{a_n+1}$  
by setting
\begin{equation}\label{E:lifttlvA}
\tlv(y_1,\dots,y_n)=v(x_1,\dots,x_n),
\qquad
x_i=|y_i|\ \text{ if } a_i>0,\ \text{and } x_i=y_i\ \text{ if } a_i=0,
\end{equation}
where each $y_i$ runs over $\R^{a_i+1}$.  
Since $v$ is $C^\infty_c(\RnAp)$ and vanishes near the boundary,  
we have $\tlv\in C^\infty_c(\R^{n+|A|})$. Note also that 
$$|x|^2=|y|^2.$$

Then, the classical scaled Gaussian Poincar\'e inequality gives
\[
\int_{\mathbb{R}^{n+|A|}} |\nabla \tlv(y)|^2 d\mu_\lambda(y)
\;\ge\;
\frac{1}{\lambda^2}
\int_{\mathbb{R}^{n+|A|}} 
\Big|
\tlv(y) - 
\int_{\mathbb{R}^{n+|A|}} \tlv(y) d\mu_\lambda(y)
\Big|^2
d\mu_\lambda(y),
\]
where 
\[
d\mu_\lambda(y)=
\frac{e^{-\frac{|y|^2}{2\lambda^2}}}
{\int_{\R^{n+|A|}} e^{-\frac{|y|^2}{2\lambda^2}} dy}\,dy.
\]

As before, for each $i\in A$, for the block $y_i=(y_{i,1},\dots,y_{i,a_i})$, we have
\[
\frac{\partial \tlv}{\partial y_{i,j}}
= \frac{\partial v}{\partial r_i}(x',r_1,\dots,r_k)\frac{y_{i,j}}{r_i},
\qquad r_i=|y_i|,
\]
and therefore
\[
|\nabla_{y_i}\tlv|^2
=(\partial_i v)^2.
\]
Combining all derivatives, we again obtain
\begin{equation}\label{E:nablatlvsqdv}
|\nabla\tlv(y)|^2
=|\nabla v(x',|y_1|,\dots,|y_k|)|^2.
\end{equation}
Thus, integrating over $\R^{n+|A|}$ in spherical coordinates yields, on using  the identity (\ref{E:inttlvx'yA}) (for the function $|\nabla v(x)|e^{-\frac{|x|^2}{4\lambda^2}}$ in place of $v(x)$), we have:
\begin{equation*}
\begin{split}
\int_{\R^{n+|A|}} |\nabla \tlv(y)|^2 d\mu_\lambda(y)
&=\prod_{i=1}^{n} |\mathbb{S}^{a_i}|
\int_{\RnAp} |\nabla v(x)|^2 
\frac{x^A e^{-\frac{|x|^2}{2\lambda^2}}}
{\prod_{i=1}^{n} |\mathbb{S}^{a_i}|\int_{\RnAp}x^A e^{-\frac{|x|^2}{2\lambda^2}}dx}\,dx\\
&=\int_{\RnAp} |\nabla v(x)|^2\, d\mu_{\lambda,A}(x).
\end{split}
\end{equation*}

Similarly, for the right-hand side, we have
\begin{equation*}
\begin{split}
\int_{\R^{n+|A|}} 
\Big|\tlv(y)-\!\int_{\R^{n+|A|}}\!\tlv(y)\,d\mu_\lambda(y)\Big|^2 d\mu_\lambda(y)
&=\int_{\RnAp} 
\Big|v(x)-\!\int_{\RnAp}\!v \,d\mu_{\lambda,A}\Big|^2
d\mu_{\lambda,A}(x).
\end{split}
\end{equation*}
Combining both expressions gives the desired inequality.
\end{proof}

\begin{corollary}[Equality in the scaled Gaussian Poincaré inequality with monomial weights]\label{cor:eq-Poincare-weighted}
Let $A=(a_1,\dots,a_n)\in\mathbb{Z}_{\ge 0}^n$ and $\lambda>0$.  
With
\[
d\mu_{A,\lambda}(x)
=\frac{x^A\,e^{-\frac{|x|^2}{2\lambda^2}}}{\displaystyle\int_{\RnAp} x^A\,e^{-\frac{|x|^2}{2\lambda^2}}\,dx}\,dx,
\qquad
x^A:=\prod_{i=1}^n x_i^{a_i},
\]
the inequality
\[
\int_{\RnAp}|\nabla v|^2\,d\mu_{A,\lambda}
\;\ge\;
\frac{1}{\lambda^2}\int_{\RnAp}\Big|\,v(x)-\!\!\int_{\RnAp}\! v\,d\mu_{A,\lambda}\Big|^2 d\mu_{A,\lambda}(x)
\]
holds for all $v\in X_{A,\lambda}$.  
Moreover, {equality holds if and only if}
\[
v(x)=\sum_{i:\,a_i=0}d_i\,x_i+c, 
\qquad d_i,c\in\R,
\]
i.e., the linear part of $v$ involves only the coordinates with zero weight $a_i=0$.  
(The same characterization applies to the unscaled case $\lambda=1$.)
\end{corollary}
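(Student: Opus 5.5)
The inequality itself is Theorem~\ref{SPoin}, so only the equality characterization needs an argument. I will transfer the problem to $\R^{n+|A|}$ through the lifting map (\ref{E:lifttlvA}) and use the known equality cases of the classical scaled Gaussian Poincaré inequality (\ref{E:poincare}).

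First, for $v\in C^\infty_c(\RnAp)$, the proof of Theorem~\ref{SPoin} shows that the lifted function $\tlv$ satisfies two exact identities. The gradient side obeys
\[
\int_{\R^{n+|A|}}|\nabla\tlv|^2\,d\mu_\lambda=\int_{\RnAp}|\nabla v|^2\,d\mu_{A,\lambda},
\]
and the variance sides coincide in the same way. Both identities use only polar coordinates in each block $y_i$. I will extend them by density to all $v\in X_{A,\lambda}$: the lift is an isometry from $X_{A,\lambda}$ onto the closed subspace of $W^{1,2}(\R^{n+|A|},\mu_\lambda)$ consisting of functions that are radial in each block $y_i$ with $a_i>0$. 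Consequently, equality holds for $v$ if and only if it holds for $\tlv$ in (\ref{E:poincare}) on $\R^{n+|A|}$.

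Second, by the equality case of (\ref{E:poincare}), equality for $\tlv$ forces $\tlv(y)=\mathbf d\cdot y+c$. Such a function must also be invariant under rotations in each block $y_i$ with $a_i>0$. Since $a_i\ge1$ is an integer, each such block has dimension $a_i+1\ge2$. An affine function invariant under $SO(a_i+1)$ acting on that block must have zero coefficients there, because no nonzero linear functional on $\R^{m}$, $m\ge2$, is rotation invariant. Hence only the coordinates $y_i=x_i$ with $a_i=0$ survive, and $v(x)=\sum_{a_i=0}d_ix_i+c$.

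Conversely, a function of this form lifts to an affine function on $\R^{n+|A|}$, which attains equality in (\ref{E:poincare}). Transferring back through the two identities gives equality for $v$.

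The main obstacle is justifying the transfer for functions $v\in X_{A,\lambda}$ that are not compactly supported in $\RnAp$, such as affine functions, which do not vanish near the walls. I expect to handle this by approximating $v$ with cutoffs $v\chi_\varepsilon$, where $\chi_\varepsilon$ vanishes near the walls. The cutoff error goes to $0$ in the $\mu_{A,\lambda}$ norm, because the lifted walls $\{y_i=0\}$ have codimension $a_i+1\ge2$ and therefore zero capacity in $\R^{n+|A|}$. Equivalently, on the orthant side the weight $x_i^{a_i}$ with $a_i\ge1$ suppresses the cutoff gradient, since $\int_0^\varepsilon \varepsilon^{-2}x^{a_i}\,dx\to0$. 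Both sides of the inequality are continuous in the $X_{A,\lambda}$ norm, so the identities, and hence the equality analysis, persist for all of $X_{A,\lambda}$.
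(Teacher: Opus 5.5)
Your proposal is correct and follows essentially the paper's route: you lift to $\R^{n+|A|}$ as in Theorem~\ref{SPoin} and invoke the equality case of the classical scaled Gaussian Poincar\'e inequality to get $\tlv$ affine. You then kill the coefficients in the weighted blocks by symmetry, using $SO(a_i+1)$-invariance where the paper uses evenness of $\tlv$ in $y_i$ (the same thing here), and obtain the converse by lifting the affine function.

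One small correction to your density remark, which the paper does not address at all: for $a_i=1$ the linear cutoff gives $\int_0^\varepsilon\varepsilon^{-2}x\,dx=\tfrac12\not\to0$. So to place the affine functions in $X_{A,\lambda}$ you need either a logarithmic cutoff (which is what the codimension-two zero-capacity fact actually requires), or the observation that $v\chi_\varepsilon$ stays bounded in the $X_{A,\lambda}$ norm and converges to $v$ in $L^2(\mu_{A,\lambda})$, so $v$ lies in the weakly closed subspace $X_{A,\lambda}$.
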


\begin{proof}
Let $A=(a_1,\dots,a_n)\in\mathbb{Z}_{\ge0}^n$ and let
\[
d\mu_{A,\lambda}(x)=\frac{x^A e^{-\frac{|x|^2}{2\lambda^2}}}{\int_{\RnAp} x^A e^{-\frac{|x|^2}{2\lambda^2}}dx}\,dx.
\]
As in (\ref{E:lifttlvA}), we use the lift $\tilde v:\R^n\to\R$, given  by
\[
\tilde v(y):=v(x),\qquad
x_i=
\begin{cases}
|y_i|,& \hbox{if $ a_i>0$},\\[2pt]
y_i,& \hbox{if $a_i=0$}.
\end{cases}
\]
Then $\tilde v$ is even in each coordinate with $a_i>0$, and the change of variables shows that the scaled
Poincar\'e inequality for $v$ with respect to $\mu_{A,\lambda}$ is equivalent to the classical Gaussian one for
$\tilde v$ with respect to the Gaussian measure $\mu_\lambda$.

For the classical Gaussian inequality, equality holds if and only if $\tilde v(y)=\alpha\cdot y+\beta$ with
$\alpha\in\R^n$, $\beta\in\R$. Since $\tilde v$ is even in $y_i$ for every $a_i>0$, the odd term
$\alpha_i y_i$ must vanish; hence $\alpha_i=0$ whenever $a_i>0$. Writing the result back in $x$–variables,
we obtain
\[
v(x)=\sum_{i:\,a_i=0}\alpha_i x_i+\beta.
\]

Conversely, if $v(x)=\sum_{i:\,a_i=0}\alpha_i x_i+\beta$, then the lift is
$\tilde v(y)=\alpha\cdot y+\beta$ with $\alpha_i=0$ for $a_i>0$, which attains equality in the classical
Gaussian Poincaré inequality, hence also in the weighted/scaled inequality after pushing down.
This proves the characterization.
\end{proof}

\begin{theorem}\label{T:weighted-poincare-stability}
For $\lambda>0$ and $v\in X_{A,\lambda,0}$, we have
\begin{equation*}
\begin{split}
\int_{\RnAp} |\nabla v|^2\, d\mu_{\lambda,A}-\frac{1}{|\lambda|^2}
\inf_{c\in \R}
\Big(\int_{\RnAp}|v-c|^2 \; d\mu_{\lambda,A}\Big)&\\
&\hskip -2in 
\;\ge\;
\frac{1}{|\lambda|^2}
\inf_{c,d_i\in \R}
\int_{\RnAp}
\Big(
 |v(x)-c-\sum_{i:a_i=0}(d_i\cdot x_i)|^2
\Big)
d\mu_{\lambda,A}(x),
\end{split}
\end{equation*}
where
\[
d\mu_{\lambda,A}(x)
=
\frac{x^A\, e^{-\frac{|x|^2}{2\lambda^2}}}
     {\displaystyle\int_{\RnAp} x^A\, e^{-\frac{|x|^2}{2\lambda^2}}\,dx}\,dx,
\qquad
x^A = \prod_{i=1}^n x_i^{A_i}.
\]
\end{theorem}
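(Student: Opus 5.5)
We prove Theorem~\ref{T:weighted-poincare-stability} by the lifting argument from the proof of Theorem~\ref{SPoin}, combined with the Euclidean result Theorem~\ref{thm:gaussian-poincare-lambda} of \cite{LLR25}. By density we may take $v\in C^\infty_c(\RnAp)$. We lift it to $\tlv$ on $\R^{n+|A|}$ as in (\ref{E:lifttlvA}), so that $\tlv\in C^\infty_c(\R^{n+|A|})$. Three facts carry over from that proof: $|\nabla\tlv(y)|=|\nabla v(x)|$ by (\ref{E:nablatlvsqdv}); integrals of radial-in-each-block functions against $\mu_\lambda$ on $\R^{n+|A|}$ equal the corresponding integrals against $\mu_{\lambda,A}$ on $\RnAp$, by (\ref{E:inttlvx'yA}); and the spherical factors cancel in the normalizations. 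Consequently
\[
\int_{\R^{n+|A|}}|\nabla\tlv|^2\,d\mu_\lambda=\int_{\RnAp}|\nabla v|^2\,d\mu_{\lambda,A},\qquad \inf_{c}\int|\tlv-c|^2\,d\mu_\lambda=\inf_c\int_{\RnAp}|v-c|^2\,d\mu_{\lambda,A}.
\]
The second identity holds because the optimal $c$ on both sides is the common mean $\int\tlv\,d\mu_\lambda=\int v\,d\mu_{\lambda,A}$.

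Next, Theorem~\ref{thm:gaussian-poincare-lambda} in dimension $N=n+|A|$ gives
\[
\int|\nabla\tlv|^2d\mu_\lambda-\frac1{\lambda^2}\inf_c\int|\tlv-c|^2d\mu_\lambda\ \ge\ \frac1{\lambda^2}\inf_{c,\mathbf d}\int|\tlv-c-\mathbf d\cdot y|^2\,d\mu_\lambda .
\]
Here we use that the infimum of a sum dominates the sum of infima, so splitting off the $|u-c|^2$ term in Theorem~\ref{thm:gaussian-poincare-lambda} gives the stated form. The one point needing care is that the optimal pair $(c,\mathbf d)$ on the right is the same in both terms of that theorem. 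If that fails, we instead derive the inequality directly from the spectral decomposition of the Ornstein--Uhlenbeck operator: the deficit is at least $\frac1{\lambda^2}$ times the $L^2(\mu_\lambda)$-norm squared of the part of $\tlv$ orthogonal to the affine functions, since the eigenvalues beyond the first two levels are at least $2/\lambda^2$. In either route the target is the displayed inequality.

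The main step is to push the right-hand side back down to $\RnAp$. We expand the squared distance from $\tlv$ to affine functions in $L^2(\mu_\lambda)$. Since $\tlv$ is even in each coordinate of every block $y_i$ with $a_i>0$, and $\mu_\lambda$ is invariant under these reflections, $\tlv$ is orthogonal to each coordinate function $y_{i,j}$ in such a block. It is also orthogonal to every product of such a coordinate with a constant or with an unweighted coordinate. Hence the orthogonal projection of $\tlv$ onto the affine span has $d_{i,j}=0$ for all weighted blocks. The infimum is therefore attained by $c+\sum_{i:a_i=0}d_iy_i$, which is a lifted function of $x$. Applying (\ref{E:inttlvx'yA}) once more, the right-hand side becomes
\[
\frac1{\lambda^2}\inf_{c,d_i}\int_{\RnAp}\Big|v(x)-c-\sum_{i:a_i=0}d_ix_i\Big|^2d\mu_{\lambda,A}(x).
\]

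Combining the three displays gives the claim for $v\in C^\infty_c(\RnAp)$. We extend to $X_{A,\lambda,0}$ by density: both sides are continuous in the $\|\cdot\|_{A,\lambda}$ norm, because the infima are squared distances to finite-dimensional subspaces of $L^2(\mu_{\lambda,A})$, and these subspaces lie in $L^2$ since the Gaussian factor gives all moments.
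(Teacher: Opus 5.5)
Your proposal is correct and follows the paper's own proof: you lift $v$ to $\tlv$ on $\R^{n+|A|}$, apply Theorem~\ref{thm:gaussian-poincare-lambda}, and split the infimum of the sum into a sum of infima. You then use the evenness of $\tlv$ in each weighted block to force $d_{i,j}=0$ there, and push everything back down via (\ref{E:inttlvx'yA}). The ``point needing care'' you flag is not an issue, since $\inf_{c,\mathbf d}\bigl(f(c)+g(c,\mathbf d)\bigr)\ge\inf_c f(c)+\inf_{c,\mathbf d}g(c,\mathbf d)$ holds with no common minimizer; your Ornstein--Uhlenbeck spectral fallback is nonetheless also a valid argument.
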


\begin{proof}

Let $v \in C^\infty_c(\RnAp)$.  
We lift $v$ to a function on 
$\R^{a_1+1} \times \cdots \times \R^{a_n+1}$  
by setting
\[
\tlv(y_1,\dots,y_n)=v(x_1,\dots,x_n),
\qquad
x_i=|y_i|\ \text{ if } a_i>0,\ \text{and } x_i=y_i\ \text{ if } a_i=0,
\]
where each $y_i$ runs over $\R^{a_i+1}$.  
Since $v$ is $C^\infty_c(\RnAp)$ and vanishes near the boundary,  
we have   \(\tlv \in X_{\lambda,0}\), then for \(\lambda>0\)
\begin{equation}\label{E:stability-lift-poincare}
    \begin{split}
        \int_{\mathbb{R}^{n+|A|}}|\nabla \tlv|^2\,d\mu_\lambda
    \;&\ge\;\frac{1}{|\lambda|^2} \inf_{c\in \R,\mathbf{d}\in \R^{n+|A|}} \int_{\mathbb{R}^{n+|A|}} \Big(|\tlv(x)-c|^2 + |\tlv-c-\mathbf{d}\cdot x|^2\Big) d\mu_\lambda(x),\\
    &\geq \frac{1}{|\lambda|^2} \inf_{c\in \R} \Big(\int_{\mathbb{R}^{n+|A|}} |\tlv-c|^2 d\mu_\lambda \Big)\\
    & \hskip 1in + \frac{1}{|\lambda|^2} \inf_{c\in \R,\mathbf{d}\in \R^{n+|A|}} \Big(\int_{\mathbb{R}^{n+|A|}} |\tlv(x)-c-\mathbf{d}\cdot x|^2\Big) d\mu_\lambda(x),\\
    \end{split}
\end{equation}

where 
\[
d\mu_\lambda(y)=
\frac{e^{-\frac{|y|^2}{2\lambda^2}}}
{\int_{\R^{n+|A|}} e^{-\frac{|y|^2}{2\lambda^2}} dy}\,dy.
\]

As before, each  $y_i=(y_{i,1},\dots,y_{i,a_i})$ and $d_i=(d_{i,1},\dots,d_{i,a_i})$

\begin{equation*}
\begin{split}
\int_{\R^{n+|A|}} |\nabla \tlv(y)|^2 d\mu_\lambda(y)
&=\int_{\RnAp} |\nabla v(x)|^2\, d\mu_{\lambda,A}(x).
\end{split}
\end{equation*}


Then
\[
\int |\tilde v(y)-c-\mathbf{d}\cdot y|^2\,d\mu =\int |\tlv-c|^2\,d\mu
-2\sum_{i=1, j=1}^{n,a_i} d_{i,j}\int (\tlv(y)-c)  y_{i,j} d\mu_\lambda(y)
+\sum_{i=1, j=1}^{n,a_i}  d_{i,j}^2 \int y_{i,j}^2\,d\mu(y),
\]
because $\int y_{i,j} y_{l,m}\,d\mu=0$ for ${i,j}\ne (l,m)$ by evenness of the measure.

For each coordinate the quadratic in $d_{i,j}$ is minimized at
\[
d^*_{i,j}=\frac{\int (\tlv(y)-c)  y_{i,j} d\mu_\lambda(y)}{\int y_{i,j}^2\,d\mu(y)},
\]
\[\inf_{d_{i,j}}\big(d_{i,j}^2\int y_{i,j}^2\,d\mu(y) -2d_{i,j}\int (\tlv(y)-c)  y_{i,j} d\mu_\lambda(y)\big)=-\,\frac{(\int (\tlv-c)  y_{i,j} d\mu_\lambda(y))^2}{\int y_{i,j}^2\,d\mu(y)}.
\]
Hence
\[
\frac{1}{|\lambda|^2} \inf_{c\in \R,\mathbf{d}\in \R^{n+|A|}} \int_{\mathbb{R}^{n+|A|}} |\tlv(y)-c-\mathbf{d}\cdot x|^2 d\mu_\lambda
=\int |\tilde v-c|^2\,d\mu-\sum_{i=1, j=1}^{n,a_i} \,\frac{(\int (\tlv(y)-c)  y_{i,j} d\mu_\lambda(y))^2}{\int y_{i,j}^2\,d\mu(y)}.
\]

 If $a_i>0$, then $\tilde v$ is even in $y_i$, so $t$ is even in $y_i$, and
$y_i$ is odd; therefore 
$$\int (\tlv-c)  y_{i,j} d\mu_\lambda(y)=0.$$
Thus, $d_{i,j}^*=0$ whenever $a_i>0$.

\medskip
 Identifying $y_i$ with $x_i$ for the coordinates with $a_i=0$ gives
\[
\frac{1}{|\lambda|^2} \inf_{c\in \R,\mathbf{d}\in \R^{n+|A|}} \int_{\mathbb{R}^{n+|A|}} |\tlv(x)-c-\mathbf{d}\cdot x|^2 d\mu_\lambda
=\frac{1}{|\lambda|^2} \inf_{\substack{c\in \R,\mathbf{d} \in\mathbb{R}^n:\\ d_i=0\ \text{if }a_i>0}}
\int |v(x)-c-\mathbf{d}\cdot x|^2\,d\mu_{\lambda,A}(x).
\]

\end{proof}

\section{Stability of the Heisenberg Uncertainty Principle on an Orthant-Proof of Theorem \ref{T2}}\label{s:StabHUPOrth}
The main purpose of this section is to establish the stability of the Heisenberg Uncertainty Principle on an orthant. Furthermore, we will also prove the stability of the stability inequality for the Heisenberg Uncertainty Principle on an orthant.

Recall that $S({\Rnkp})$ is the completion of $C_c^\infty({\Rnkp})$ under the norm 
\begin{equation*}
    \left (\int_{{\Rnkp}} |\nabla u(x)|^2 \, dx \right )^{\frac{1}{2}} +  \left (\int_{{\Rnkp}} |x|^2 |u(x)|^2 dx\right )^{\frac{1}{2}}.
\end{equation*}
We first establish the stability for the scale non-invariant Heisenberg Uncertainty Principle on an othant. 
\begin{theorem}\label{T:staborthHeis}
For any $u \in S({\Rnkp})$, we have
\begin{equation*}
    \begin{split}
       \int_{{\Rnkp}} |\nabla u(x)|^{2} \, dx + \int_{{\Rnkp}} |x|^{2} |u(x)|^{2} \, dx &- (n+2k) \int_{{\Rnkp}} |u(x)|^{2} \, dx\\
&\ge 2 \inf_{c} \int_{{\Rnkp}} \left| u(x) - c \prod^n_{i=n-k+1}x_i^2 e^{-\frac12 |x|^{2}} \right|^{2} dx . 
    \end{split}
\end{equation*}
\emph{The constant $2$ on the right hand side is sharp and the equality holds 
if and only if $u$ is of the form
\[
u(x)
= \Bigg(\prod_{i=n-k+1}^{n} x_i\Bigg)
\, e^{-\frac{|x|^2}{2}}
\Bigg(\sum_{i=1}^{n-k} b_i x_i + b
\Bigg),
\qquad b_1,\dots,b_{n-k},b\in\mathbb{R}.
\]}
\end{theorem}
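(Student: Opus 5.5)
The plan is to reduce the statement to the scaled Gaussian Poincar\'e inequality with monomial weights, using the Heisenberg identity of Corollary~\ref{C:HUPIO+}. I read the factor $\prod_{i=n-k+1}^n x_i^2$ inside the infimum as $w(x)=\prod_{i=n-k+1}^n x_i$. This is the only reading consistent with the optimizer set $\tilde E$ and with the equality case as stated.

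\emph{Reduction.} First take $u\in C^\infty_c(\Rnkp)$. Write $u=wv$ with $v\in C^\infty_c(\Rnkp)$, and set $g(x):=v(x)e^{|x|^2/2}$. Corollary~\ref{C:HUPIO+} (that is, Theorem~\ref{T:HUPIO} with $\alpha=1$) says the left-hand side of the theorem equals
\[
\int_{\Rnkp}|\nabla g|^2\,w^2e^{-|x|^2}\,dx = Z\int_{\Rnkp}|\nabla g|^2\,d\mu_{\lambda,A}.
\]
Here $A=(0,\dots,0,2,\dots,2)$ has the entry $2$ in the last $k$ slots, $\lambda=1/\sqrt2$ (so that $e^{-|x|^2/(2\lambda^2)}=e^{-|x|^2}$), and $Z=\int_{\Rnkp}w^2e^{-|x|^2}\,dx$.

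\emph{Poincar\'e step.} Since $A$ has nonnegative integer entries, Theorem~\ref{SPoin} and Corollary~\ref{cor:eq-Poincare-weighted} apply to $g$. They give
\[
\int|\nabla g|^2\,d\mu_{\lambda,A}\ \ge\ 2\inf_{c}\int|g-c|^2\,d\mu_{\lambda,A},
\]
where the infimum over $c$ is attained at the mean of $g$. Undoing the normalization,
\[
Z\int|g-c|^2\,d\mu_{\lambda,A}=\int_{\Rnkp}\bigl|v e^{|x|^2/2}-c\bigr|^2w^2e^{-|x|^2}\,dx=\int_{\Rnkp}\bigl|u-c\,w\,e^{-|x|^2/2}\bigr|^2\,dx.
\]
This is exactly the claimed inequality with constant $2$.

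\emph{Equality and sharpness.} By Corollary~\ref{cor:eq-Poincare-weighted}, equality holds if and only if $g=c+\sum_{i\le n-k}b_ix_i$. The linear terms may involve only the unweighted coordinates $x_1,\dots,x_{n-k}$, since $a_i=0$ exactly for those. Translating back gives $u=w\,e^{-|x|^2/2}\bigl(b+\sum_{i=1}^{n-k}b_ix_i\bigr)$. For $k<n$ these functions with some $b_i\neq0$ lie outside the optimizer family, so the constant $2$ cannot be increased. When $k=n$, sharpness must instead come from the fact that $1/\lambda^2=2$ is the sharp Poincar\'e constant on the lifted space $\R^{n+2k}$. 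In that case I would test a function such as $g=|x|^2-\mathrm{const}$, or simply rely on the equality clause as stated.

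\emph{Main obstacle: extension to $S(\Rnkp)$ and the equality class.} For general $u$, take $u_j\in C^\infty_c(\Rnkp)$ converging to $u$ in the $S$-norm. Both sides are continuous in that norm, because $\|u\|_2$ is controlled by the $S$-norm through the inequality itself. The infimum over $c$ is a continuous function of $u$, since it is an explicit quadratic minimization. The delicate point is the equality case. The optimizers are not compactly supported, so I must check two things. First, the identity of Corollary~\ref{C:HUPIO+} persists for $u\in S(\Rnkp)$ with $g\in\mathcal X_{A,\lambda}$. Second, equality in the limit still forces equality in the Poincar\'e inequality for $g$. I would handle this by cutting off $g$ at large $|x|$ and near the walls. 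The boundary terms vanish because $w^2$ kills the walls and the Gaussian kills infinity. Then one passes to the limit in the divergence-theorem step of Theorem~\ref{T:HUPIO}.
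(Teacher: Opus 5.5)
Your reduction is the paper's own argument. Corollary~\ref{C:HUPIO+} turns the left side into $\int|\nabla g|^2w^2e^{-|x|^2}\,dx$. The monomial-weight Gaussian Poincar\'e inequality with $A=(0,\dots,0,2,\dots,2)$, together with Corollary~\ref{cor:eq-Poincare-weighted}, then gives both the bound and the equality class. You are right on two details the paper gets wrong. The scaling must be $\lambda=1/\sqrt2$: the paper writes $\lambda=1$, but the constant $2$ is $1/\lambda^2$. And the factor $\prod x_i^2$ in the statement should read $\prod x_i$. For $k<n$ your sharpness argument is fine: $g=x_1$ gives equality with both sides positive.

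The step that fails is sharpness when $k=n$, and it fails because the claim itself is false in that case. The lift only produces functions on $\R^{n+2k}$ that are radial in each $\R^3$-block. When $k=n$, no nonzero linear function has this $O(3)^n$-invariance. So a mean-zero lifted $g$ has no component in the first Hermite chaos. On the remaining chaoses the Ornstein--Uhlenbeck operator for $e^{-|y|^2}$ is $\ge 4$. Hence for $k=n$ the inequality actually holds with $4$ in place of $2$.

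Your proposed test function confirms this rather than refuting it. The function $g=|x|^2-\tfrac{3n}{2}$ lifts to a second-chaos eigenfunction and has Rayleigh quotient exactly $4$. Already for $n=k=1$, $g=x^2-\tfrac32$ satisfies $-g''-\bigl(\tfrac2x-2x\bigr)g'=4g$.

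Nor can you ``rely on the equality clause''. For $k=n$ the equality cases are exactly $u=b\,w\,e^{-|x|^2/2}\in\tilde E$, where both sides vanish, so they say nothing about the constant. The sharpness assertion can only be proved for $k<n$. The paper's proof likewise only characterizes equality and never addresses the case $k=n$.
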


\begin{proof}
By a density argument, we can assume that $u \in C^\infty_c(\mbr^n_{k,+})$. By Corollary \ref{C:HUPIO+}, we have
    \begin{equation}\label{HUPIO+}
    \begin{split}
       \int_{\Rnkp} |\nabla u(x)|^2 \, dx  \;+ &\int_{{\Rnkp}} |x|^2 \left\vert u(x) \right\vert^2 dx- (n + 2k) \left( \int_{\Rnkp} u^2 \, dx\right) \\  
  &= \int_{\Rnkp} \Bigl| \nabla \!\Bigl(\frac{u}{\prod_{i=1}^k x_i} e^{\tfrac{|x|^{2}}{2}}\Bigr) \Bigr|^{2} e^{-|x|^{2}} \prod_{i=1}^k x_i^2\, dx.
    \end{split}
\end{equation}
Using the Poincar\'{e} inequality (Theorem \ref{SPoin}) we have:
 \begin{equation*}
    \begin{split}
       \int_{\Rnkp} |\nabla u(x)|^2 \, dx  \;+ &\int_{\Rnkp} |x|^2 \left\vert u(x) \right\vert^2 dx- (n + 2k) \left( \int_{\Rnkp} \left\vert u(x) \right\vert^2 \, dx\right) \\  
  &\geq 2\inf_{C\in \R}\int_{\Rnkp} \left|\left(\frac{u(x)}{\prod_{i=1}^k x_i} e^{\tfrac{|x|^{2}}{2}}\right)-C \right|^{2} e^{-|x|^{2}} \prod_{i=1}^k x_i^2\, dx\\
  &=2\inf_{C\in \R}\int_{\Rnkp} \Bigl|\ u(x)-Ce^{-\frac{|x|^{2}}{2}} \prod_{i=1}^k x_i \Bigr|^{2} \, dx.\\
    \end{split}
\end{equation*}

Finally, using Corollary \ref{C:HUPIO+} and Corollary \ref{cor:eq-Poincare-weighted} with $\lambda=1$, we have that the equality holds if and only if $v(x)= \sum_{i:a_i=0} b_i x_i+b \; \text{ for } b_i,b\in \R$ with $$v(x)=\frac{u(x)}{\prod_{i=n-k+1}^n x_i} e^{\tfrac{|x|^{2}}{2}}.$$
Equivalently, $$ u(x)
= \Bigg(\prod_{i=n-k+1}^{n} x_i\Bigg)
\, e^{-\frac{|x|^2}{2}}
\Bigg(\sum_{i=1}^{n-k} b_i x_i + b
\Bigg),
\qquad b_1,\dots,b_{n-k},b\in\mathbb{R}. $$
\end{proof}

We are ready now to prove the stability for the scale-invariant Heisenberg Uncertainty Principle on an orthant. 
\begin{proof}[Proof of Theorem \ref{T2}.]
    Let $u \in C^\infty_c(\Rnkp)$, then by \eqref{eq:hupi} we have.
    \begin{equation*}
    \begin{split}
       \Big( \int_{\Rnkp} |\nabla u(x)|^2 \, dx\Big)^\frac{1}{2}  \; & \left(\int_{{\Rnkp}} |x|^2 |u(x)|^2 dx\right)^\frac{1}{2}- \frac{(n + 2k)}{2}  \int_{{\Rnkp}} |u(x)|^2 \, dx \\  
      &= \frac{\lambda^2}{2}\int_{\Rnkp}  \left| \nabla \Big( \frac{u(x)}{\prod_{i=n-k+1}^n x_i} e^{\frac{|x|^2}{2\lambda^2}} \Big) \right|^2 e^{-\frac{1}{\lambda^2}|x|^2}\prod_{i=n-k+1}^n x_i^2 dx.
    \end{split}
\end{equation*}
   
Using Theorem \ref{SPoin}, we have:
 \begin{align*}
       \Big( \int_{{\Rnkp}} |\nabla u(x)|^2 \, dx \Big)^\frac{1}{2} \; & \left(\int_{{\Rnkp}} |x|^2 |u(x)|^2 dx \right)^\frac{1}{2}- \frac{(n + 2k)}{2}  \int_{{\Rnkp}} |u(x)|^2 \, dx  \\  
  &\geq \inf_{C\in \R, \lambda \neq 0}\int_{\Rnkp} \left|\!\left(\frac{u(x)}{\prod_{i=1}^k x_i} e^{\tfrac{|x|^{2}}{2\lambda^2}}\right)-C \right|^{2} e^{-\frac{|x|^{2}}{\lambda^2}} \prod_{i=1}^k x_i^2\, dx\\
  &=\inf_{C\in \R, \lambda \neq 0}\int_{\Rnkp} \Bigl|\ u(x)-Ce^{-\frac{|x|^{2}}{2\lambda^2}} \prod_{i=1}^k x_i \Bigr|^{2} \, dx\\
  &=\inf_{v\in \tilde{E}}\int_{\Rnkp} \Bigl|\ u(x)-v(x)|^{2} \, dx.
    \end{align*}
Now, using Corollary \ref{C:orthant-identity} and Corollary \ref{cor:eq-Poincare-weighted}, we have that the equality holds if and only if $v(x)= \sum_{i:a_i=0} b_i x_i+b \; \text{ for } b_i,b\in \R$ with $$v(x)=\frac{u(x)}{\prod_{i=n-k+1}^n x_i} e^{\tfrac{|x|^{2}}{2\lambda^2}}.$$
Equivalently, $$ u(x)
= \Bigg(\prod_{i=n-k+1}^{n} x_i\Bigg)
\, e^{-\frac{|x|^2}{2\lambda^2}}
\Bigg(\sum_{i=1}^{n-k} b_i x_i + b
\Bigg),
\qquad b_1,\dots,b_{n-k},b\in\mathbb{R}. $$
\end{proof}

\begin{theorem}
For all $u \in S({\Rnkp})$:
\[
\rho_1(u) \ge \frac12 \inf_{\omega \in \tilde{E}} 
\left\{ \int_{{\Rnkp}} |u(x) - \omega(x)|^2 \, dx 
\; : \; \int_{{\Rnkp}} |u(x)|^2 \, dx = \int_{{\Rnkp}} |\omega(x)|^2 \, dx \right\}.
\]
\emph{Moreover, the inequality is sharp and the equality can be attained by nontrivial functions.}
\end{theorem}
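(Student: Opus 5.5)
The plan is to deduce the statement from Theorem \ref{T2}. The idea is to compare, for each fixed profile in $\tilde{E}$, the constrained distance (same $L^2$ norm) with the unconstrained one; the comparison is elementary Hilbert-space geometry on a one-dimensional subspace.

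\emph{Step 1 (reduction to a line).} Fix $\beta>0$ and set
$$\varphi_\beta(x)=c_\beta\Big(\prod_{i=n-k+1}^n x_i\Big)e^{-\beta|x|^2},$$
normalized so that $\|\varphi_\beta\|_2=1$; then $\tilde{E}=\bigcup_{\beta>0}\mathbb{R}\varphi_\beta$. Write $t_\beta=\langle u,\varphi_\beta\rangle$ and $N=\|u\|_2$, so that $|t_\beta|\le N$ by Cauchy--Schwarz.
\begin{itemize}
\item The unconstrained distance from $u$ to the line $\mathbb{R}\varphi_\beta$ is attained at $\alpha=t_\beta$ and equals $N^2-t_\beta^2$.
\item Among the elements $\omega=\alpha\varphi_\beta$ with $\|\omega\|_2=N$, the only choices are $\alpha=\pm N$. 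The better one is $\alpha=\operatorname{sgn}(t_\beta)N$, which gives $\|u-\omega\|_2^2=2N^2-2N|t_\beta|$.
\end{itemize}

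\emph{Step 2 (comparison).} Since $|t_\beta|\le N$,
$$N^2-t_\beta^2=(N-|t_\beta|)(N+|t_\beta|)\ge N(N-|t_\beta|)=\tfrac12\big(2N^2-2N|t_\beta|\big).$$
Hence, for every $\beta>0$,
$$\inf_{\alpha}\|u-\alpha\varphi_\beta\|_2^2\;\ge\;\tfrac12\inf\big\{\|u-\omega\|_2^2:\ \omega\in\mathbb{R}\varphi_\beta,\ \|\omega\|_2=N\big\}.$$
Taking the infimum over $\beta$ on both sides and then applying Theorem \ref{T2} gives
$$\rho_1(u)\ge\inf_{v\in\tilde{E}}\|u-v\|_2^2\ge\tfrac12\inf\Big\{\|u-\omega\|_2^2:\ \omega\in\tilde{E},\ \|\omega\|_2=\|u\|_2\Big\},$$
first for $u\in C^\infty_c(\Rnkp)$ and then on $S(\Rnkp)$ by density. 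The density step is fine because all quantities involved are continuous in the $S(\Rnkp)$ norm; in particular $\|u\|_2$ is controlled through Theorem \ref{T:HeisOrth}.

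\emph{Step 3 (sharpness).} This is the delicate part. Equality in Step 2 for all relevant $\beta$ requires $t_\beta=0$ or $|t_\beta|=N$. The latter forces $u\in\tilde{E}$, where both sides vanish and nothing is gained, so I will aim for $t_\beta=0$ for every $\beta$ combined with equality in Theorem \ref{T2}.

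Assume $k<n$ and take
$$u(x)=\Big(\prod_{i=n-k+1}^n x_i\Big)e^{-B|x|^2}\,x_1,$$
which belongs to the equality class of Theorem \ref{T2} (with $b=0$, $b_1=1$, and all other $b_i=0$). Since $u$ is odd in $x_1$ while every element of $\tilde{E}$ is even in $x_1$, we get $t_\beta=0$ for all $\beta$. Consequently:
\begin{itemize}
\item the unconstrained distance to $\tilde{E}$ equals $\|u\|_2^2$;
\item every admissible $\omega$ in the constrained problem gives $\|u-\omega\|_2^2=2\|u\|_2^2$;
\item by Theorem \ref{T2}, $\rho_1(u)=\|u\|_2^2$.
\end{itemize}
So equality holds in the statement, with a nontrivial $u\notin\tilde{E}$.

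The main obstacle is the case $k=n$: there are no free coordinates, so the equality class of Theorem \ref{T2} reduces to $\tilde{E}$ itself. I would first check whether the statement is intended with $k<n$. Failing that, I would look for near-extremal sequences showing that the constant $\tfrac12$ cannot be improved, rather than exact nontrivial optimizers.
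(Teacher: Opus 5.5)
Your proof of the inequality is correct, but it takes a different route from the paper's.

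\textbf{The inequality.} The paper does not use Theorem~\ref{T2} here. It writes $u=(\prod_{i=n-k+1}^n x_i)v$ and lifts $v$ to $\tlv$ on $\R^{n-k}\times(\R^3)^k$. It then applies the norm-constrained stability inequality of \cite{CFLL24} for the HUP on $\R^{n+2k}$. Finally it pushes the result back down with Theorem~\ref{T:integration}, using that the Gaussians $\alpha e^{-\beta|X|^2}$ on $\R^{n+2k}$ correspond exactly to the elements of $\tilde E$.

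You instead show that the constrained bound is a formal consequence of the unconstrained one. On each line $\R\varphi_\beta$, the estimate $N^2-t_\beta^2\ge N(N-|t_\beta|)$ compares the two distances, and this works for any set closed under scalar multiplication. Your route stays inside the paper, needs no external constrained result and no second lifting, and shows where the factor $\tfrac12$ comes from. The paper's route is shorter once \cite{CFLL24} is granted.

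\textbf{Sharpness for $k<n$.} You both use $x_1(\prod_{i=n-k+1}^n x_i)e^{-B|x|^2}$. The paper computes $\rho_1$ and the constrained infimum explicitly at $B=\tfrac12$. You use oddness in $x_1$ together with the equality case of Theorem~\ref{T2}. If you prefer not to rely on that equality characterization, there is a direct check. The ratio $\rho_1(u)/\|u\|_2^2$ is dilation invariant. At $B=\tfrac12$ the lift $x_1e^{-|X|^2/2}$ is an eigenfunction of $-\Delta+|X|^2$ with eigenvalue $n+2k+2$, and $\|\nabla\tlv\|_2=\||X|\tlv\|_2$. Together these give $\rho_1(u)=\|u\|_2^2$.

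\textbf{The case $k=n$.} Your caution is justified, and the situation is worse than your fallback allows.
\begin{itemize}
\item The paper's example also tacitly needs $k<n$. For $k=n$ it becomes $x_1^2\prod_{i=2}^n x_i\,e^{-|x|^2/2}$, which is positive on the orthant and so not orthogonal to $\tilde E$.
\item Near-extremal sequences cannot exist. When $k=n$, the lift of $g=(u/\prod_{i} x_i)\,e^{|x|^2/(2\alpha^2)}$ is radial in every $\R^3$ block. It is therefore orthogonal, for the Gaussian measure, to all linear functions, so it has no degree-one Hermite component.
\item In the identity behind Theorem~\ref{T2} (Corollary~\ref{C:orthant-identity} with the optimal $\alpha$), you may then use the Ornstein--Uhlenbeck spectral gap $2/\lambda^2$ on the orthogonal complement of affine functions, instead of the gap $1/\lambda^2$ from Theorem~\ref{SPoin}. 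This gives $\rho_1(u)\ge 2\inf_{v\in\tilde E}\|u-v\|_2^2$.
\item Your Step~2 then yields $\rho_1(u)\ge\inf\{\|u-\omega\|_2^2:\omega\in\tilde E,\ \|\omega\|_2=\|u\|_2\}$.
\end{itemize}
So for $k=n$ the constant $\tfrac12$ is not sharp, and the sharpness part of the statement must be restricted to $k<n$.
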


\begin{proof}
    Let $u \in C_c^\infty({\Rnkp})$.  Then there exists ${v} \in  C_c^\infty({\Rnkp})$ such that \begin{equation*}
        u(x)=\left(\prod_{i=n-k+1}^{n} x_i\right)    v(x)\qquad\hbox{  for all $x\in\Rnkp$.}
    \end{equation*}
    We lift $v$ to a function $\tlv$ on $\R^{n-k}\times (\R^3)^k$  given by
    \begin{equation*}
        \tlv(x',y)=v(x', |y_1|,\ldots, |y_k|)
    \end{equation*}
    for all $x'\in\R^{n-k}$ and with each $y_i$ running over $\R^3$. Note that since $v$ is smooth, of compact support, and vanishes near the boundary of $\Rnkp$, we have $\tlv\in C^\infty_c\bigl(\R^{n-k}\times (\R^3)^k\bigr)$.
    Then By \cite{CFLL24} we have
    \[
    \delta_1(\tlv) \ge \frac12 \inf_{\tilde{\omega} \in {E}} 
    \left\{ \int_{\mathbb{R}^{n-k} \times \R^{3k}} |\tlv - \tilde{\omega}|^2 \, dx dy 
    \; : \; \int_{\mathbb{R}^{n-k} \times \R^{3k}} |\tlv|^2 \, dx dy = \int_{\mathbb{R}^{n-k} \times \R^{3k}} |\tilde{\omega}|^2 \, dx dy \right\}
    \]
    where, \[
\delta_{1}(\tlv)
= 
\left( \int_{\mathbb{R}^{n-k} \times \R^{3k}} |\nabla \tlv|^{2}\, dxdy \right)^{\frac{1}{2}}
\left( \int_{\mathbb{R}^{n-k} \times \R^{3k}} |x|^{2} |\tlv|^{2}\, dx dy\right)^{\frac{1}{2}}
- \frac{n+2k}{2} \int_{\mathbb{R}^{n-k} \times \R^{3k}} |u|^{2}\, dxdy.
\]
Then by Theorem \ref{T:integration}

\begin{equation*}
    \begin{split}
\rho_1(u) \ge \frac12 \inf_{\omega \in {E}} 
\left\{ \int_{{\Rnkp}} |u - \pxi \omega|^2 \, dx 
\; : \; \int_{{\Rnkp}} |u|^2 \, dx = \int_{{\Rnkp}} |\pxi\omega|^2 \, dx \right\}
    \end{split}
\end{equation*}
where $\omega= \alpha e^{-\beta|x|^2}$ if $ \tilde{\omega}= \alpha e^{-\beta(|x|^2|y|^2)}$. Therefore

\begin{equation*}
    \begin{split}
\rho_1(u) \ge \frac12 \inf_{\omega \in \widetilde {E}} 
\left\{ \int_{{\Rnkp}} |u(x) -  \omega(x)|^2 \, dx 
\; : \; \int_{{\Rnkp}} |u(x)|^2 \, dx = \int_{{\Rnkp}} |\omega(x)|^2 \, dx \right\}.
    \end{split}
\end{equation*}

Let $u(x)=x_1(\prod_{i=n-k+1}^nx_i)e^{-\frac{|x|^2}{2}}$,
we get

 $$\Big(\int_{\Rnkp}|\nabla u(x) |^2 dx\Big)^\frac{1}{2}\Big( \int_{\Rnkp} |x|^2 |u(x)|^2\ dx\Big)^\frac{1}{2}- \frac{(n+2k)}{2}\int_{\Rnkp}|u(x)|^2 \ dx = \frac{1}{2|\mathbb{S}^2|^k}\pi^{\frac{n+2k}{2}}.$$
 Also,
$$|\mathbb{S}^2|^k \int_{{\Rnkp}} \left| c \prod^n_{i=n-k+1}x_i^2 e^{-\frac{1}{2\lambda^2} |x|^{2}} \right|^{2} dx =  c^{2}\lambda^{n+2k}\pi^{\frac{n+2k}{2}} $$
$$|\mathbb{S}^2|^k \int_{{\Rnkp}} \left| u(x) \right|^{2} dx= \frac{\pi^{\frac{n+2k}{2}}}{2}.$$
Therefore
\begin{equation*}
    \begin{split}
        & \inf_{\omega \in \tilde{E}} 
\left\{ \int_{{\Rnkp}} |u(x) - \omega(x)|^2 \, dx 
\; : \; \int_{{\Rnkp}} |u(x)|^2 \, dx = \int_{{\Rnkp}} |\omega(x)|^2 \, dx \right\} \\
&=\inf_{c,\lambda}\left\{ \frac{\pi^{\frac{n+2k}{2}}}{2|\mathbb{S}^2|^k}+ \frac{c^{2}\lambda^{n+2k}\pi^{\frac{n+2k}{2}}}{|\mathbb{S}^2|^k}
\; : \;  \frac{\pi^{\frac{n+2k}{2}}}{2|\mathbb{S}^2|^k}= \frac{c^{2}\lambda^{n+2k}\pi^{\frac{n+2k}{2}}}{|\mathbb{S}^2|^k} \right\}
= \frac{\pi^{\frac{n+2k}{2}}}{|\mathbb{S}^2|^k}.
    \end{split}
\end{equation*}

\end{proof}

\begin{theorem}
For all $u \in C^\infty_c(\Rnkp)$, then
\begin{align*}
    & \int_{{\Rnkp}} |\nabla u(x)|^2 \, dx    + \int_{{\Rnkp}} |x|^2 |u(x)|^2 \, dx - (n+2k) \int_{{\Rnkp}} |u(x)|^2 \, dx \\
& \ge \frac{2}{n+2k+3} \inf_{c \in \mathbb{R}} \left(
\int_{{\Rnkp}} \left| \nabla \left(u(x) - c\prod^n_{i=n-k+1}x_ie^{-\frac12 |x|^2} \right) \right|^2  dx + \right.  \\ &\qquad\left.\int_{{\Rnkp}} |x|^2 \left| u(x) - c \prod^n_{i=n-k+1}x_ie^{-\frac12 |x|^2} \right|^2 dx + \int_{{\Rnkp}} \left| u(x) - c \prod^n_{i=n-k+1}x_i e^{-\frac12 |x|^2} \right|^2 dx \right ).
   \end{align*}
\emph{The inequality is sharp and the equality can be attained by nontrivial functions.}
\end{theorem}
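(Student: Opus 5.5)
The plan is to reduce the statement to the spectral-gap inequality of Theorem~\ref{T:staborthHeis}. The idea is that the deficit is a quadratic form that does not change when we subtract the right multiple of the optimizer. Write $N:=n+2k$, $w(x)=\prod_{i=n-k+1}^n x_i$, $U(x):=w(x)e^{-\frac12|x|^2}$, and
\[
Q(f):=\int_{\Rnkp}|\nabla f|^2\,dx+\int_{\Rnkp}|x|^2|f|^2\,dx-N\int_{\Rnkp}|f|^2\,dx .
\]
The left-hand side of the claimed inequality is $Q(u)$.

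\textbf{Step 1: $U$ is a ground state.} I first check that $-\Delta U+|x|^2U=NU$ on $\Rnkp$. This follows from a direct computation using $\Delta w=0$ and $x\cdot\nabla w=kw$. Equivalently, $\tilde U$ is the Gaussian ground state of the harmonic oscillator on $\R^{N}$. Hence $Q(U)=0$ by Corollary~\ref{C:HUPIO+}. Moreover, for $u\in C_c^\infty(\Rnkp)$ and any $c$, integration by parts gives
\[
Q(u-cU)=Q(u)-2c\int_{\Rnkp}u\,(-\Delta U+|x|^2U-NU)\,dx+c^2Q(U)=Q(u).
\]
No boundary terms appear, because $u$ has compact support in the open orthant.

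\textbf{Step 2: choose $c$ and apply the gap.} Take $c_0=\int uU/\int U^2$ and set $h=u-c_0U$, so that $h\perp U$ in $L^2(\Rnkp)$. Theorem~\ref{T:staborthHeis} applied to $h$ gives $Q(h)\ge 2\inf_c\|h-cU\|_2^2$. By orthogonality this infimum is attained at $c=0$, so
\[
Q(h)\ge 2\|h\|_2^2 .
\]
Here $h\in S(\Rnkp)$, and the theorem applies to $S(\Rnkp)$ by density.

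\textbf{Step 3: algebra.} Let $T(h):=\|\nabla h\|_2^2+\|xh\|_2^2+\|h\|_2^2=Q(h)+(N+1)\|h\|_2^2$. Then
\[
Q(h)-\tfrac{2}{N+3}T(h)=\tfrac{N+1}{N+3}\bigl(Q(h)-2\|h\|_2^2\bigr)\ge 0 .
\]
Combining with Steps 1 and 2,
\[
Q(u)=Q(h)\ge \tfrac{2}{N+3}\,T(u-c_0U)\ge \tfrac{2}{N+3}\inf_c T(u-cU),
\]
which is the claim for $u\in C_c^\infty(\Rnkp)$.

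\textbf{Sharpness.} Equality requires $Q(h)=2\|h\|_2^2$ and $c_0$ to be a minimizer of $c\mapsto T(u-cU)$. By the equality case of Theorem~\ref{T:staborthHeis}, the first condition holds for $h=x_1U$ when $n>k$. For $u=x_1U+cU$ we have $h=x_1U$, and $x_1U$ is orthogonal to $U$ both in $L^2$ and in the $T$-inner product, since its lift is odd in $x_1$. So the infimum over $c$ is attained at $c_0$, and equality holds.

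\textbf{Main obstacle.} The main difficulty is the attainability claim when $k=n$. In that case no linear directions $x_i$ with $i\le n-k$ are available, and Theorem~\ref{T:staborthHeis} only gives equality for multiples of $U$, which are trivial here. For that case I would first try to find the next eigenfunction of the lifted oscillator that is radial in each $y_j$, and check whether the gap (and hence the constant) actually changes.
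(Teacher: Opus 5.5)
Your argument is correct, and it takes a genuinely different route from the paper. The paper writes $u=wv$, lifts $v$ to $\tlv$ on $\R^{n+2k}$, quotes the full-space scale non-invariant stability estimate of \cite{CFLL24} (constant $\frac{2}{N+3}$), and pushes all three terms back down with Theorem~\ref{T:integration}. You stay on the orthant and use three ingredients:
\begin{itemize}
\item the ground-state invariance $Q(u-cU)=Q(u)$;
\item the spectral-gap estimate of Theorem~\ref{T:staborthHeis}, which you rightly use with $U=we^{-|x|^2/2}$ as its proof yields, rather than with the $\prod x_i^2$ in its display;
\item the identity $T=Q+(N+1)\|\cdot\|_2^2$.
\end{itemize}
The paper's route is shorter once \cite{CFLL24} is granted. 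However, it must push down $\tlv-ce^{-|z|^2/2}$, which does not vanish near the walls, so the gradient identity of Theorem~\ref{T:integration} needs a routine extension there. It also says nothing about sharpness or equality. Your route is internal to the paper and shows more. Since $T(u-cU)=Q(u)+(N+1)\|u-cU\|_2^2$, the infimum over $c$ is always attained at your $c_0$. Hence the statement is \emph{equivalent} to Theorem~\ref{T:staborthHeis}, and its equality cases are exactly those of that theorem. You can also drop the density step by applying Theorem~\ref{T:staborthHeis} to $u$ itself: $Q(h)=Q(u)\ge 2\inf_c\|u-cU\|_2^2=2\|h\|_2^2$. Note that the extremals $(x_1+c)U$ lie in $S(\Rnkp)$, not $C_c^\infty(\Rnkp)$, so equality has to be read after extending by continuity.

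Your concern about $k=n$ is justified, and it cannot be resolved in favour of the statement. For $k=n$ the lifted functions are radial in every block $y_j\in\R^3$. The radial spectrum of $-\Delta+|y|^2$ on $\R^3$ is $3,7,11,\dots$, with first excited state $(|y|^2-\tfrac32)e^{-|y|^2/2}$. So the relevant gap is $4$, i.e.\ $Q(h)\ge 4\|h\|_2^2$ for $h\perp U$, and your Step 3 with $4$ in place of $2$ gives the constant $\frac{4}{N+5}>\frac{2}{N+3}$.

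Concretely, take $n=k=1$, so $N=3$. The Dirichlet oscillator on $(0,\infty)$ has eigenvalues $3,7,11,\dots$. The function $u=(2x^3-3x)e^{-x^2/2}$ is orthogonal to $U$ and gives $Q(u)=4\|u\|_2^2$, while the right-hand side is $\frac13 T(u)=\frac83\|u\|_2^2$.

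Hence for $k=n$ the constant $\frac{2}{n+2k+3}$ is not sharp, and equality occurs only for multiples of $U$, where both sides vanish. The sharpness clause of Theorem~\ref{T:staborthHeis} fails for $k=n$ for the same reason. What you have proved is precisely the part of the statement that is true: the inequality for every $k$, and sharpness with nontrivial extremals $x_1U$ exactly when $k<n$.
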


\begin{proof}

 Let $u \in C_c^\infty({\Rnkp})$.  Then there exists ${v} \in  C_c^\infty({\Rnkp})$ such that \begin{equation*}
        u(x)=\left(\prod_{i=n-k+1}^{n} x_i\right)    v(x)\qquad\hbox{  for all $x\in\Rnkp$.}
    \end{equation*}
    We lift $v$ to a function on $\R^{n-k}\times (\R^3)^k$  given by
    \begin{equation*}
        \tlv(x',y)=v(x', |y_1|,\ldots, |y_k|)
    \end{equation*}
    for all $x'\in\R^{n-k}$ and with each $y_i$ running over $\R^3$. Note that since $v$ is smooth, of compact support, and vanishes near the boundary of $\Rnkp$, we have $\tlv\in C^\infty_c\bigl(\R^{n-k}\times (\R^3)^k\bigr)$.
    Then by \cite{CFLL24} we have
    \begin{equation*}
        \begin{split}  
    \int_{\mathbb{R}^{n+2k}}& |\nabla \tilde{v}(x',y)|^{2}\,dx'\,dy
    + \int_{\mathbb{R}^{n+2k}} |x|^{2} |\tilde{v}(x',y)|^{2}\,dx'\,dy
    - (n+2k) \int_{\mathbb{R}^{n+2k}} |\tilde{v}(x',y)|^{2}\,dx'\,dy\\
   & \ge \frac{2}{n+2k+3}\inf_{c \in \mathbb{R}} \Big( \int_{\mathbb{R}^{n+2k}} \left| \nabla\!\left(\tilde{v}(x',y)
    - c e^{-\frac{1}{2}|x|^{2}}\right) \right|^{2}\,dx'\,dy \\
    &+ \int_{\mathbb{R}^{n+2k}} |x|^{2} 
        \left| \tilde{v}(x',y) - c e^{-\frac{1}{2}|x|^{2}} \right|^{2}\,dx'\,dy
        + \int_{\mathbb{R}^{n+2k}} 
        \left| \tilde{v}(x',y) - c e^{-\frac{1}{2}|x|^{2}} \right|^{2}\,dx'\,dy\Big).
        \end{split}
    \end{equation*}
    Let 
    $$\tilde{w} (x',y)= \tilde{v}(x',y) - c e^{-\frac{1}{2}(|x'|^{2}+|y|^2)}$$
    and 
    $$w(x) =v(x)-c e^{-\frac{1}{2}|x|^2}.$$
    By Theorem \ref{T:integration},
    \begin{align*}  
    \int_{\Rnkp}& |\nabla u(x)|^{2}\,dx 
    + \int_{\Rnkp} |x|^{2} |u(x)|^{2}\,dx 
    - (n+2k) \int_{\Rnkp} |u(x)|^{2}\,dx \\
   & \ge \frac{2}{n+2k+3}\inf_{c \in \mathbb{R}} \left( \int_{\mathbb{R}^{n+2k}} | \nabla\tilde{w}(x',y)|^{2}\,dx'\,dy + \int_{\mathbb{R}^{n+2k}} |x|^{2} 
        | \tilde{w}(x',y)|^{2}\,dx'\,dy\right.\\
        &\left. \qquad
        + \int_{\mathbb{R}^{n+2k}} 
        \left| \tilde{w}(x',y)\right|^{2}\,dx'\,dy\right)\\
        & = \frac{2}{n+2k+3}\inf_{c \in \mathbb{R}} \left( \int_{\Rnkp} \left| \nabla \left(\pxi w(x) \right) \right|^{2}\,dx + \int_{\Rnkp} |x|^{2}  \pxi w(x)^{2} \,dx\right. \\
        &\left.\qquad \qquad  \qquad  \qquad+ \int_{\Rnkp}   \pxi w(x)^{2}\,dx\right)\\
        &= \frac{2}{n+2k+3} \inf_{c \in \mathbb{R}} \left(
\int_{{\Rnkp}} \left| \nabla \left(u(x) - c\prod^n_{i=n-k+1}x_ie^{-\frac12 |x|^2} \right) \right|^2  dx   \right.\\ 
&\left. +\int_{{\Rnkp}} |x|^2 \left| u(x) - c \prod^n_{i=n-k+1}x_ie^{-\frac12 |x|^2} \right|^2 dx + \int_{{\Rnkp}} \left| u(x) - c \prod^n_{i=n-k+1}x_i e^{-\frac12 |x|^2} \right|^2 dx \right).
       \end{align*}
\end{proof}
Now we will look at stability of stability of the HUP. In \cite{LLR25} it has been established that for $u \in C^\infty_c(\RnAp)$,
\[
\begin{aligned}
&\Biggl(\int_{\RnAp} |\nabla u(x)|^2\,x^A dx\Biggr)^{\!1/2}
\Biggl(\int_{\RnAp} |u(x)|^2 |x|^2\,x^A dx\Biggr)^{\!1/2}
- \frac{n+\left\vert A\right\vert}{2}\int_{\RnAp} |u(x)|^2\,x^A dx \\
&\quad - \inf_{c\in\R,\,\lambda \neq 0}
   \int_{\RnAp} \Bigl|u(x) - c\,e^{-\tfrac{|x|^2}{2\lambda^2}}\Bigr|^2 x^A dx \\
&\;\;\ge \;\; \tfrac12 \inf_{c\in\R,\,\mathbf d \in \R^N,\,\lambda \neq 0}
   \left(\int_{\RnAp} \Bigl| u(x) - (c+\mathbf d \cdot x)\,e^{-\tfrac{|x|^2}{2\lambda^2}} \Bigr|^2 x^A dx\right)^{1/2}.
\end{aligned}
\]

This provides an improved scale-dependent stability estimate for the Heisenberg Uncertainty Principle with monomial weight, showing that near-extremizers must be quantitatively close to the Gaussian–affine extremal family.

Using this, we get the following result for stabitity of stability of HUP on orthant 

\begin{theorem}
   For $ u \in X(\Rnkp)$, 
\begin{align*}
&\Biggl(\int_{\Rnkp} |\nabla  u(x)|^2\, dx\Biggr)^{\!1/2}
\Biggl(\int_{\Rnkp} | u(x)|^2 |x|^2\, dx\Biggr)^{\!1/2}
- \frac{n+2k}{2}\int_{\Rnkp} | u(x)|^2\, dx \\
&\quad -\inf_{c\in \R}\int_{\Rnkp} \left| u(x) -ce^{-\frac{1}{2\alpha^2}|x|^2}\pxi  \right|^2 dx\\ 
& \hskip 1in \geq  \inf_{c\in \R.,\mathbf{d}\in\R^{n-k}\times {0}^k}\int_{\Rnkp} \left| u(x) -(c+\mathbf{d}\!\cdot\!x)e^{-\frac{1}{2\alpha^2}|x|^2}\pxi \right|^2 dx.
    \end{align*}
\end{theorem}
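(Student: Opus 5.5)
We reduce the statement to the monomial-weighted estimate of \cite{LLR25} quoted just above, taking the weight to be $x^A=\prod_{i=n-k+1}^n x_i^2$. So $A=(0,\ldots,0,2,\ldots,2)$ with $|A|=2k$, and $\RnAp=\Rnkp$. The parameter $\alpha$ in the statement is read as the optimal scale
\[
\alpha=\Big(\int_{\Rnkp}|x|^2|u|^2\,dx\Big/\int_{\Rnkp}|\nabla u|^2\,dx\Big)^{1/4},
\]
as in Corollary \ref{C:orthant-identity}; otherwise the infima over $\lambda$ are taken throughout.

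By density we may take $u\in C_c^\infty(\Rnkp)$. We write $u=w\,v$ with $w=\prod_{i=n-k+1}^n x_i$ and $v\in C_c^\infty(\Rnkp)$, using the factorization lemma of Section 2. The proof then has three steps.

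\textbf{Step 1 (the first two terms).} By Lemma \ref{II} and Theorem \ref{T:integration} with all $l_i=1$, we have
\[
\int_{\Rnkp}|\nabla u|^2\,dx=\int_{\Rnkp}|\nabla v|^2x^A\,dx+R,
\]
where the remainder $R$ comes from the terms $\int \frac{u^2}{x_i^2}$ and $\int u\,\partial_i u/x_i$. It is simpler to use the identity through the lift $\tlv$ on $\R^{n+2k}$:
\[
\int_{\R^{n+2k}}|\nabla\tlv|^2=|\mathbb S^2|^k\int_{\Rnkp}|\nabla u|^2,\qquad
\int_{\R^{n+2k}}|y|^{2a}|\tlv|^2=|\mathbb S^2|^k\int_{\Rnkp}|x|^{2a}|u|^2 .
\]
Since the lift of $v$ has $|\nabla\tlv|=|\nabla v|\circ P$, polar coordinates give
\[
\int_{\R^{n+2k}}|\nabla\tlv|^2=|\mathbb S^2|^k\int_{\Rnkp}|\nabla v|^2x^A\,dx .
\]
Hence
\[
\int_{\Rnkp}|\nabla u|^2=\int_{\Rnkp}|\nabla v|^2x^A,\qquad
\int_{\Rnkp}|x|^2|u|^2=\int_{\Rnkp}|x|^2|v|^2x^A,\qquad
\int_{\Rnkp}|u|^2=\int_{\Rnkp}|v|^2x^A .
\]
So the weighted Heisenberg deficit of $v$ with $n+|A|=n+2k$ equals $\rho_1(u)$.

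\textbf{Step 2 (the distance terms).} For any $c$, $\mathbf d$, $\lambda$,
\[
\int_{\Rnkp}\bigl|v-(c+\mathbf d\cdot x)e^{-|x|^2/2\lambda^2}\bigr|^2x^A\,dx
=\int_{\Rnkp}\bigl|u-(c+\mathbf d\cdot x)e^{-|x|^2/2\lambda^2}w\bigr|^2dx .
\]
Applying this with $\mathbf d=0$ converts the subtracted infimum. Applying the cited \cite{LLR25} estimate to $v$ then gives the inequality, except that the infimum on the right runs over all $\mathbf d\in\R^n$.

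\textbf{Step 3 (removing the wall directions).} This is the main obstacle: we must show the infimum may be restricted to $\mathbf d\in\R^{n-k}\times\{0\}^k$. We plan the following. Fix $c$ and $\lambda$; the functional is quadratic in $\mathbf d$. We do not have evenness on the orthant itself, so we argue on the lift instead. The proof of the \cite{LLR25} estimate passes through the Gaussian Poincaré stability on $\R^{n+2k}$, namely Theorem \ref{T:weighted-poincare-stability}. There the minimizing $d_{i,j}^*$ vanish for the lifted $y$-coordinates, because $\tlv$ is even in each $y_i$ while $y_{i,j}$ is odd. So we redo the argument through Corollary \ref{C:orthant-identity}: write $\rho_1(u)$ as $\frac{\lambda^2}{2}\int|\nabla(v e^{|x|^2/2\lambda^2})|^2d\mu$-type quantities, lift to $\R^{n+2k}$, and apply Theorem \ref{T:weighted-poincare-stability} there with the lifted coordinates. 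In the lifted picture the linear terms appear only in the $x'$ coordinates, which yields exactly $\mathbf d\in\R^{n-k}\times\{0\}^k$. Descending back via Step 2 completes the proof.

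One point needs care: the weight $x_i^2$ corresponds to lifting to $\R^3$ blocks, and in the monomial convention of Theorem \ref{SPoin} the exponent $a_i=2$ corresponds to $\R^{a_i+1}=\R^3$. This is consistent, so the bookkeeping of the normalizing constants $|\mathbb S^2|^k$ cancels on both sides.
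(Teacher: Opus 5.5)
Your Step 3 is the paper's argument, and it gives a correct proof. Write $\rho_1(u)$ via the optimal-scale identity of Corollary~\ref{C:orthant-identity}, then apply Theorem~\ref{T:weighted-poincare-stability} with weight $\prod_{i=n-k+1}^n x_i^2$ and $\lambda^2=\alpha^2/2$ to $f=(u/\prod_{i=n-k+1}^n x_i)e^{|x|^2/(2\alpha^2)}$; evenness on the $\mathbb{R}^3$ lift is what removes the wall components of $\mathbf d$. Finally convert back using $|f-c-\mathbf d\cdot x|^2\big(\prod_{i=n-k+1}^n x_i^2\big)e^{-|x|^2/\alpha^2}=\big|u-(c+\mathbf d\cdot x)e^{-|x|^2/(2\alpha^2)}\prod_{i=n-k+1}^n x_i\big|^2$.

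You can drop the detour in Steps 1--2 through the quoted \cite{LLR25} bound. In the form quoted (factor $\tfrac12$, a square root, and infima over $\lambda$, which go the wrong way for a fixed $\alpha$) it would not give the stated inequality even apart from the $\mathbf d$ restriction.
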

\begin{proof}
For $ u \in C^\infty_c(\Rnkp)$, by \eqref{eq:hupi} and Theorem \ref{T:weighted-poincare-stability}, we have
\begin{align*}
     &\Big(\int_{{\Rnkp}}|\nabla u|^2 \, dx\Big)^\frac{1}{2} \;  \Big(\int_{{\Rnkp}} |x|^2 \left\vert u(x) \right\vert^2 dx\Big)^\frac{1}{2}- \frac{(n + 2k)}{2} \left( \int_{{\Rnkp}} \left\vert u(x) \right\vert^2 \, dx\right)\\  
  &= \frac{\alpha^2}{2}\int_{\Rnkp}  | \nabla \Big( \frac{u(x)}{\prod_{i=n-k+1}^n x_i} e^{\frac{|x|^2}{2\alpha^2}} \Big) \Big|^2 e^{-\frac{1}{\alpha^2}|x|^2}\prod_{i=n-k+1}^n x_i^2 dx\\
&\geq \inf_{c\in \R}\int_{\Rnkp} \left|\Big(\frac{u(x)}{\prod_{i=n-k+1}^n x_i} e^{\frac{|x|^2}{2\alpha^2}} \Big)-c\  \right|^2 e^{-\frac{1}{\alpha^2}|x|^2}\prod_{i=n-k+1}^n x_i^2 dx\\ 
& \hskip 1in + \inf_{c\in \R,\mathbf{d}\in\R^{n-k}\times {0}^k}\int_{\Rnkp} \left| \Big( \frac{u(x)}{\prod_{i=n-k+1}^n x_i} e^{\frac{|x|^2}{2\alpha^2}} \Big)-c-\mathbf{d}\!\cdot\!x \right|^2 e^{-\frac{1}{\alpha^2}|x|^2}\prod_{i=n-k+1}^n x_i^2 dx\\
&\geq\inf_{c\in \R}\int_{\Rnkp} \left| u(x) -ce^{-\frac{1}{2\alpha^2}|x|^2}\pxi  \right|^2 dx\\ 
& \hskip 1in + \inf_{c\in \R.,\mathbf{d}\in\R^{n-k}\times {0}^k}\int_{\Rnkp} \left| u(x) -(c+\mathbf{d}\!\cdot\!x)e^{-\frac{1}{2\alpha^2}|x|^2}\pxi \right|^2 dx.
\end{align*}

\end{proof}


\begin{thebibliography}{999}

\bibitem{BGL14} D. Bakry, I. Gentil, M. Ledoux, Analysis and Geometry of Markov Diffusion Operators, Grundlehren Math. Wiss., 348, Springer, Cham, 2014, xx+552 pp.

\bibitem{BEL} A. A. Balinsky, W. D. Evans, R. T. Lewis, The Analysis and Geometry of Hardy's Inequality, Universitext, Springer, Cham, 2015, xv+263 pp.

\bibitem{BFT04} G. Barbatis, S. Filippas, A. Tertikas, A unified approach to improved $L^p$ Hardy inequalities with best constants, Trans. Amer. Math. Soc. 356 (2004), no. 6, 2169--2196.

\bibitem{BWW03} T. Bartsch, T. Weth, M. Willem, A Sobolev inequality with remainder term and critical equations on domains with topology for the polyharmonic operator, Calc. Var. Partial Differential Equations 18 (2003), 253--268.

\bibitem{Beckner93} W. Beckner, Sharp Sobolev inequalities on the sphere and the Moser-Trudinger inequality, Ann. of Math. (2) 138 (1993), no. 1, 213--242.

\bibitem{BE91} G. Bianchi, H. Egnell, A note on the Sobolev inequality, J. Funct. Anal. 100 (1991), 18--24.

\bibitem{BDNN20} M. Bonforte, J. Dolbeault, B. Nazaret, N. Nikita, Stability in Gagliardo-Nirenberg-Sobolev inequalities, Flows, Regularity and the Entropy Method, Mem. Amer. Math. Soc. 308 (2025), no. 1554, viii+166 pp..

\bibitem{BL85} H. Brezis, E. H. Lieb, Sobolev inequalities with remainder terms, J. Funct. Anal. 62 (1985), 73--86.

\bibitem{BM98} H. Brezis, M. Marcus, Hardy's inequalities revisited, Ann. Scuola Norm. Sup. Pisa Cl. Sci. (4) 25 (1997), no. 1--2, 217--237 (1998).

\bibitem{BV} H. Brezis, J. L. V\'azquez, Blow-up solutions of some nonlinear elliptic problems, Rev. Mat. Univ. Complut. Madrid 10 (1997), 443--469.

\bibitem{CF13} E. Carlen, A. Figalli, Stability for a GNS inequality and the log-HLS inequality, with application to the critical mass Keller-Segel equation, Duke Math. J. 162 (2013), no. 3, 579--625.

\bibitem{Caz10} C. Cazacu, On Hardy inequalities with singularities on the boundary, C. R. Math. Acad. Sci. Paris 349 (2011), no. 5--6, 273--277.

\bibitem{CFLL24} C. Cazacu, J. Flynn, N. Lam, G. Lu, Caffarelli-Kohn-Nirenberg identities, inequalities and their stabilities, J. Math. Pures Appl. (9) 182 (2024), 253--284.

\bibitem{CKLL24} C. Cazacu, D. Krej\v{c}i\v{r}\'{\i}k, N. Lam, A. Laptev, Hardy inequalities for magnetic $p$-Laplacians, Nonlinearity 37 (2024), no. 3, Paper No. 035004, 27 pp.

\bibitem{CZ13} C. Cazacu, E. Zuazua, Improved multipolar Hardy inequalities, in: Studies in Phase Space Analysis with Applications to PDEs, 35--52, Progr. Nonlinear Differential Equations Appl., 84, Birkh\"auser/Springer, New York, 2013.

\bibitem{CGMSO18} H. Chan, N. Ghoussoub, S. Mazumdar, S. Shakerian, L. F. de Oliveira Faria, Mass and extremals associated with the Hardy-Schr\"odinger operator on hyperbolic space, Adv. Nonlinear Stud. 18 (2018), no. 4, 671--689.

\bibitem{CLT23} L. Chen, G. Lu, H. Tang, Sharp stability of log-Sobolev and Moser-Onofri inequalities on the sphere, J. Funct. Anal. 285 (2023), no. 5, Paper No. 110022, 24 pp.

\bibitem{CLT24} L. Chen, G. Lu, H. Tang, Stability of Hardy-Littlewood-Sobolev inequalities with explicit lower bounds, Adv. Math. 450 (2024), Paper No. 109778, 28 pp.

\bibitem{CLT242} L. Chen, G. Lu, H. Tang, Optimal asymptotic lower bound for stability of fractional Sobolev inequality and the global stability of log-Sobolev inequality on the sphere, Adv. Math. 479 (2025), part B, Paper No. 110438, 32 pp.

\bibitem{CLT243} L. Chen, G. Lu, H. Tang, Optimal stability of Hardy-Littlewood-Sobolev and Sobolev inequalities of arbitrary orders with dimension-dependent constants, arXiv:2405.17727, to appear in Math. Ann.

\bibitem{CLTW} L. Chen, G. Lu, H. Tang, B. Wang, Asymptotically sharp stability of Sobolev inequalities on the Heisenberg group with dimension-dependent constants, J. Math. Pures Appl. (9) 206 (2026), Paper No. 103832, 29 pp.

\bibitem{CFW13} S. Chen, R. Frank, T. Weth, Remainder terms in the fractional Sobolev inequality, Indiana Univ. Math. J. 62 (2013), no. 4, 1381--1397.

\bibitem{CFMP09} A. Cianchi, N. Fusco, F. Maggi, A. Pratelli, The sharp Sobolev inequality in quantitative form, J. Eur. Math. Soc. (JEMS) 11 (2009), no. 5, 1105--1139.

\bibitem{DD14} L. D'Ambrosio, S. Dipierro, Hardy inequalities on Riemannian manifolds and applications, Ann. Inst. H. Poincar\'e Anal. Non Lin\'eaire 31 (2014), no. 3, 449--475.

\bibitem{DDLL25} N. A. Dao, A. X. Do, N. Lam, G. Lu, Sobolev interpolation inequalities with optimal Hardy-Rellich inequalities and critical exponents, Calc. Var. Partial Differential Equations 64 (2025), no. 7, Paper No. 204.

\bibitem{DoFLL23} A. X. Do, J. Flynn, N. Lam, G. Lu, $L^p$-Caffarelli-Kohn-Nirenberg inequalities and their stabilities, arXiv:2310.07083.

\bibitem{DoLL24} A. X. Do, N. Lam, G. Lu, A new approach to weighted Hardy-Rellich inequalities: improvements, symmetrization principle and symmetry breaking, J. Geom. Anal. 34 (2024), no. 12, Paper No. 363, 28 pp.

\bibitem{DLLZ} A. Do, N. Lam, G. Lu, Sharp stability of the Heisenberg Uncertainty Principle: Second-Order and Curl-Free Field Cases, J. Funct. Anal. 290 (2026), no. 7, Paper No. 111321. 

\bibitem{DEFFL} J. Dolbeault, M. J. Esteban, A. Figalli, R. Frank, M. Loss, Sharp stability for Sobolev and log-Sobolev inequalities, with optimal dimensional dependence, Camb. J. Math. 13 (2025), no. 2, 359–430.

\bibitem{DLL22} N. T. Duy, N. Lam, G. Lu, $p$-Bessel pairs, Hardy's identities and inequalities and Hardy-Sobolev inequalities with monomial weights, J. Geom. Anal. 32 (2022), no. 4, Paper No. 109, 36 pp.

\bibitem{DPH25} N. T. Duy, N. V. Phong, P. T. T. Hien, Hardy inequalities with Bessel pair for Dunkl operator, Adv. Nonlinear Stud. 25 (2025), no. 4, 1127--1141.

\bibitem{Fall12} M. M. Fall, On the Hardy-Poincaré inequality with boundary singularities, Commun. Contemp. Math. 14 (2012), no. 3, 1250019, 13 pp.

\bibitem{FM12} M. M. Fall, R. Musina, Hardy-Poincaré inequalities with boundary singularities, Proc. Roy. Soc. Edinburgh Sect. A 142 (2012), no. 4, 769--786.

\bibitem{F21} M. Fathi, A short proof of quantitative stability for the Heisenberg-Pauli-Weyl inequality, Nonlinear Anal. 210 (2021), Paper No. 112403, 3 pp.

\bibitem{FJ1} A. Figalli, D. Jerison, Quantitative stability for sumsets in $\mathbb{R}^n$, J. Eur. Math. Soc. (JEMS) 17 (2015), no. 5, 1079--1106.

\bibitem{FJ2} A. Figalli, D. Jerison, Quantitative stability for the Brunn-Minkowski inequality, Adv. Math. 314 (2017), 1--47.

\bibitem{FN19} A. Figalli, R. Neumayer, Gradient stability for the Sobolev inequality: the case $p \geq 2$, J. Eur. Math. Soc. (JEMS) 21 (2019), no. 2, 319--354.

\bibitem{FZ22} A. Figalli, Y. Zhang, Sharp gradient stability for the Sobolev inequality, Duke Math. J. 171 (2022), no. 12, 2407--2459.

\bibitem{FTT09} S. Filippas, A. Tertikas, J. Tidblom, On the structure of Hardy–Sobolev–Maz'ya inequalities, J. Eur. Math. Soc. (JEMS) 11 (2009), no. 6, 1165--1185.

\bibitem{Flynn20} J. Flynn, Sharp Caffarelli-Kohn-Nirenberg-type inequalities on Carnot groups, Adv. Nonlinear Stud. 20 (2020), no. 1, 95--111.

\bibitem{FLL22} J. Flynn, N. Lam, G. Lu, Hardy-Poincaré-Sobolev type inequalities on hyperbolic spaces and related Riemannian manifolds, J. Funct. Anal. 283 (2022), no. 12, Paper No. 109714, 37 pp.

\bibitem{FLL21} J. Flynn, N. Lam, G. Lu, Sharp Hardy identities and inequalities on Carnot groups, Adv. Nonlinear Stud. 21 (2021), no. 2, 281--302.

\bibitem{FLL25} J. Flynn, N. Lam, G. Lu, $L^p$-Hardy identities and inequalities with respect to the distance and mean distance to the boundary, Calc. Var. Partial Differential Equations 64 (2025), no. 1, Paper No. 22, 39 pp.

\bibitem{FLLM23} J. Flynn, N. Lam, G. Lu, S. Mazumdar, Hardy's identities and inequalities on Cartan-Hadamard manifolds, J. Geom. Anal. 33 (2023), no. 1, Paper No. 27, 34 pp.

\bibitem{FLY24} J. Flynn, G. Lu, Q. Yang, Sharp Hardy-Sobolev-Maz'ya, Adams and Hardy-Adams inequalities on quaternionic hyperbolic spaces and on the Cayley hyperbolic plane, Rev. Mat. Iberoam. 40 (2024), no. 2, 403--462.

\bibitem{Fol97} G. B. Folland, A. Sitaram, The uncertainty principle: a mathematical survey, J. Fourier Anal. Appl. 3 (1997), no. 3, 207--238.

\bibitem{FS08} R. Frank, R. Seiringer, Non-linear ground state representations and sharp Hardy inequalities, J. Funct. Anal. 255 (2008), no. 12, 3407--3430.

\bibitem{GR23} D. Ganguly, P. Roychowdhury, Improved Poincaré-Hardy inequalities on certain subspaces of the Sobolev space, Proc. Amer. Math. Soc. 151 (2023), no. 8, 3513--3527.

\bibitem{GL2017} F. Gesztesy, L. L. Littlejohn, Factorizations and Hardy-Rellich-type inequalities, Nonlinear Partial Differential Equations, Mathematical Physics, and Stochastic Analysis, 207--226, EMS Ser. Congr. Rep., Eur. Math. Soc., Zürich, 2018.

\bibitem{GLMP} F. Gesztesy, L. L. Littlejohn, I. Michael, M. Pang, Radial and logarithmic refinements of Hardy's inequality, St. Petersburg Math. J. 30 (2019), no. 3, 429--436; Algebra i Analiz 30 (2018), no. 3, 55--65.

\bibitem{GLMW} F. Gesztesy, L. L. Littlejohn, I. Michael, R. Wellman, On Birman's sequence of Hardy-Rellich-type inequalities, J. Differential Equations 264 (2018), no. 4, 2761--2801.

\bibitem{GM1} N. Ghoussoub, A. Moradifam, Functional Inequalities: New Perspectives and New Applications, Mathematical Surveys and Monographs, 187. Amer. Math. Soc., Providence, RI, 2013, xxiv+299 pp.

\bibitem{Gro75} L. Gross, Logarithmic Sobolev inequalities, Amer. J. Math. 97 (1975), no. 4, 1061--1083.

\bibitem{HT25} N. Hamamoto, F. Takahashi, A curl-free improvement of the Rellich–Hardy inequality with weight, Adv. Nonlinear Stud. 25 (2025), no. 4, 1204–1234.

\bibitem{Heis1927} W. Heisenberg, Über den anschaulichen Inhalt der quantentheoretischen Kinematik und Mechanik, Z. Physik 43 (1927), 172--198.

\bibitem{HY22} X. Huang, D. Ye, First order Hardy inequalities revisited, Commun. Math. Res. 38 (2022), no. 4, 535--559.

\bibitem{Kennard1927} E. H. Kennard, Zur Quantenmechanik einfacher Bewegungstypen, Z. Physik 44 (1927), 326--352.

\bibitem{KMP2007} A. Kufner, L. Maligranda, L.-E. Persson, The Hardy Inequality: About its History and Some Related Results, Vydavatelský Servis, Pilsen, 2007.

\bibitem{KP} A. Kufner, L.-E. Persson, N. Samko, Weighted Inequalities of Hardy Type, 2nd ed., World Scientific, Hackensack, NJ, 2017, xx+459 pp.

 

 

\bibitem{LL23} N. Lam, G. Lu, Improved $L^p$-Hardy and $L^p$-Rellich inequalities with magnetic fields, Vietnam J. Math. 51 (2023), no. 4, 971--984.

\bibitem{LLR25} N. Lam, G. Lu, A. Russanov, Stability of Gaussian Poincaré inequalities and the Heisenberg uncertainty principle with monomial weights, Math. Z. 312 (2026), no. 2, Paper No. 42.

\bibitem{LLZ19} N. Lam, G. Lu, L. Zhang, Factorizations and Hardy's type identities and inequalities on upper half spaces, Calc. Var. Partial Differential Equations 58 (2019), no. 6, Paper No. 183, 31 pp.

\bibitem{LLZ20} N. Lam, G. Lu, L. Zhang, Geometric Hardy's inequalities with general distance functions, J. Funct. Anal. 279 (2020), no. 8, 108673, 35 pp.

\bibitem{LuYang2} G. Lu, Q. Yang, Green's functions of Paneitz and GJMS operators on hyperbolic spaces and sharp Hardy-Sobolev-Maz'ya inequalities on half spaces, Adv. Math. 398 (2022), Paper No. 108156, 42 pp.

\bibitem{LuYang1} G. Lu, Q. Yang, Paneitz operators and Hardy-Sobolev-Maz'ya inequalities for higher order derivatives on half spaces, Amer. J. Math. 141 (2019), no. 6, 1777--1816.

\bibitem{LuYang3} G. Lu, Q. Yang, Sharp Hardy-Sobolev-Maz'ya, Adams and Hardy-Adams inequalities on the Siegel domains and complex hyperbolic spaces, Adv. Math. 405 (2022), Paper No. 108512, 62 pp.

\bibitem{LW99} G. Lu, J. Wei, On a Sobolev inequality with remainder terms, Proc. Amer. Math. Soc. 128 (1999), 75--84.

\bibitem{MV21} S. McCurdy, R. Venkatraman, Quantitative stability for the Heisenberg-Pauli-Weyl inequality, Nonlinear Anal. 202 (2021), Paper No. 112147, 13 pp.

\bibitem{NVH20} V. H. Nguyen, New sharp Hardy and Rellich type inequalities on Cartan--Hadamard manifolds and their improvements, Proc. Roy. Soc. Edinburgh Sect. A 150 (2020), no. 6, 2952--2981.

\bibitem{NVH19} V. H. Nguyen, Weighted Finsler trace Hardy inequality on half spaces, J. Math. Anal. Appl. 474 (2019), no. 2, 1198--1212.

\bibitem{OK} B. Opic, A. Kufner, Hardy-type inequalities, Pitman Research Notes in Mathematics Series, 219. Longman Scientific \& Technical, Harlow, 1990. xii+333 pp.


\bibitem{SuYang2012} D. Su, Q. Yang, On the best constants of Hardy inequality in $\mathbb{R}^{n-k} \times (\mathbb{R}_{+})^k$ and related improvements, J. Math. Anal. Appl. 389 (2012), 48--53.

\bibitem{Wang22} J. Wang, $L^p$ Hardy's identities and inequalities for Dunkl operators, Adv. Nonlinear Stud. 22 (2022), no. 1, 416--435.

\bibitem{YSK15} Q. Yang, D. Su, Y. Kong, Improved Hardy inequalities for Grushin operators, J. Math. Anal. Appl. 424 (2015), no. 1, 321--343.

\end{thebibliography}
\end{document}